\numberwithin{equation}{section}
\newtheorem{Theorem}{Theorem}[section]
\newtheorem{Lemma}[Theorem]{Lemma}
\newtheorem{Corollary}[Theorem]{Corollary}
\newtheorem{theorem}{theorem}[]
\newtheorem{Assumption}[theorem]{Assumption}
\newtheorem{Lemma A}[theorem]{Lemma A.}
\newtheorem{Lemma B}[thm]{Lemma B.}
\title{Strong convergence of an explicit full-discrete scheme for stochastic Burgers--Huxley equation}
\author{Yibo Wang\,$^1$\,\thanks{Email: wangyb@seu.edu.cn} 
	\and Wanrong Cao\,$^1$\,\thanks{Email: wrcao@seu.edu.cn} 
	\and Yanzhao Cao\,$^2$\,\thanks{Email: yzc0009@auburn.edu} }
\date{$^1$ School of Mathematics, Southeast University, Nanjing, Jiangsu, P.R.China \\[2mm]
$^2$ Department of Mathematics and Statistics, Auburn University, Auburn, AL, USA \\[2mm] 
January 24, 2025}
\begin{document}

\maketitle

\begin{abstract}
The strong convergence of an explicit full-discrete scheme is investigated for the stochastic Burgers--Huxley equation driven by additive space-time white noise, which possesses both Burgers-type and cubic nonlinearities. To discretize the continuous problem in space, we utilize a spectral Galerkin method. Subsequently, we introduce a nonlinear-tamed exponential integrator scheme, resulting in a fully discrete scheme. Within the framework of semigroup theory, this study provides precise estimations of the Sobolev regularity, $L^\infty$ regularity in space, and H\"older continuity in time for the mild solution, as well as for its semi-discrete and full-discrete approximations. Building upon these results, we establish moment boundedness for the numerical solution and obtain strong convergence rates in both spatial and temporal dimensions. A numerical example is presented to validate the theoretical findings.

\vspace{.5em}
\noindent\textbf{Keywords:} stochastic Burgers--Huxley equation, strong convergence rate, non-globally monotone nonlinearity, fully discrete scheme, tamed exponential integrator scheme. 

\vspace{.5em}
\noindent\textbf{AMS subject classification:} 60H35, 65C30, 60H15. 
\end{abstract}

\section{Introduction} 
In this paper, we consider numerical approximations of the following nonlinear stochastic partial differential equation (SPDE) driven by an additive space-time white noise: 
\begin{equation}\label{Burgers1}
	\left\{ 
	\begin{aligned}
		&\frac{\partial u(t,x)}{\partial t} = \frac{\partial^2 u(t,x)}{\partial x^2} + u(t,x) \frac{\partial u(t,x)}{\partial x} 
		+ \nu u(t,x) (1-u(t,x)) (u(t,x)-\theta)
		+ \frac{\partial W(t,x)}{\partial t}, \ \  t \in (0,T] , \ x \in \mathcal{U},  \\
		&u(t,0) = u(t,1) = 0, \quad  t \in [0,T], \\
		&u(0,x) = u_0(x) , \quad  x \in \mathcal{U}, 
	\end{aligned}
	\right.
\end{equation}
in which $\mathcal{U} = (0,1)$, $\nu>0$ and $\theta \in (0,1)$ are parameters. $\{W(t)\}_{t\geq0}$ is a cylindrical Wiener process. 
Eq. \eqref{Burgers1} is known as the stochastic Burgers--Huxley equation (SBHE), which shows a prototype model for describing the interaction between the reaction mechanism, the convective effects and diffusion transport \cite{Mohan2020,Mohan2022}.
	The SBHE also has significant applications in biology to describe the nerve propagation in the nerve fiber and electro-physiology \cite{Chin2023,Wang1985}. The well-posedness and the regularity properties of the SBHE were established in \cite{Mohan2020,Mohan2022,Zong2023}. 
The SBHE \eqref{Burgers1} contains two types of nonlinear drift terms: $u \, \partial u / \partial x$ and $\nu u(1-u)(u-\theta)$, which are recognized as the Burgers-type nonlinearity and the cubic nonlinearity, respectively.

When the Burgers-type nonlinearity in \eqref{Burgers1} disappears, the SBHE degenerates into the stochastic Huxley equation (SHE), also known as the stochastic Allen--Cahn equation for some adequately chosen parameters. 
For numerical approximations of the SHE, it is known that the explicit Euler scheme and the linear-implicit Euler scheme (only treat the linear operator implicitly, see \cite{Wang2013} for more details) do not converge in the strong sense \cite{Hutzenthaler2011,Hutzenthaler2013}. 
Lately, several numerical methods for such nonlinear SPDEs take full advantage of the global monotone (more precisely, one-sided Lipschitz) condition of the cubic nonlinearity, i.e., 
	\begin{equation}\label{monotone condition}
		\langle u-v , f(u)-f(v) \rangle \leq C \|u-v\|^2. 
\end{equation}
Some notable examples include the drift-implicit Euler--Galerkin scheme \cite{Liu2020}, the splitting time discretization scheme \cite{Brehier2019-1,Brehier2019-2}, the truncated exponential Euler scheme \cite{Becker2023}, and the tamed accelerated exponential integrator scheme \cite{Wang2020}. 
For more numerical studies of monotone SPDEs, one can refer to \cite{Cui2019,Furihata2018,Hong2023,Liu2019,Qi2019} and references therein.

When the cubic nonlinearity is not considered, i.e., $\nu = 0$, the SBHE \eqref{Burgers1} becomes the stochastic Burgers equation (SBE). The well-posedness of SBEs driven by various types of noises was studied, e.g., the additive white noise \cite{Prato1994},  the correlated noise \cite{Prato1995} and the fractional Brownian motion \cite{Wang2010}.  Progress on numerical methods for SBEs has also been made, for example, the finite difference method \cite{Alabert2006,Hairer2011} and the Galerkin approximation \cite{Blomker2013-IMA,Blomker2013-SIAM} for spatial semi-discretization, and the full spatial-temporal discretization \cite{Blomker2013-IMA,Khan2021}. 
Notably, the convergence rates obtained in the aforementioned publications are all in the pathwise sense, not in the strong sense. 
For strong convergence of numerical solutions of SBEs driven by space-time white noises, the only known work is \cite{Jentzen2019}, where the proposed full-discrete numerical approximations were shown to be strongly convergent. However, no convergence rate was achieved in this work.

The primary inquiry regarding a numerical method for solving the SBHE \eqref{Burgers1} driven by space-time white noise is whether it exhibits behavior similar to that of the SBE, where strong convergence is attainable but not the convergence rate, or if it resembles the SHE, where a strong convergence rate can be achieved. This paper aims to address this question by introducing an explicit full-discrete scheme for solving the SBHE \eqref{Burgers1} driven by additive space-time white noise and determining the strong convergence rate of the proposed scheme. Specifically, we employ the spectral Galerkin method to spatially discretize \eqref{Burgers1}, resulting in a finite-dimensional approximation represented through a spectral expansion. For the temporal discretization, we implement a tamed accelerated exponential integrator scheme \cite{Jentzen2009,Wang2020}, which differs from the truncated method for SBEs (see \cite{Hutzenthaler2019,Hutzenthaler2022,Jentzen2019}). By ensuring the stability of the numerical solution through moment boundedness, we are able to establish the strong convergence rate of the proposed scheme, as presented in Theorem \ref{Th:temporal rate}, as follows.
\begin{equation}\label{strong rate}
	\sup\limits_{0 \leq m \leq M} \| u(t_m) - u^N_{m} \|_{L^p(\Omega; H)}
	\leq C \big( N^{-\alpha}
	+ \tau^{\frac{\alpha }{2}} \big), \quad  \forall \, \alpha \in \big(0,\tfrac{1}{2}\big),  
\end{equation}
where $H := L^2(\mathcal{U})$ and the positive constant $C$ is independent of the temporal step size $\tau$ and the dimension of the spectral Galerkin
projection space $N$. 
Here $u(t_m)$ is the mild solution of the SBHE and $u^N_{m}$ is the numerical solution produced by the fully discrete scheme \eqref{scheme1}. 
\eqref{strong rate} implies that for arbitrarily small $\epsilon>0$, the convergence rates of our proposed algorithm are $1/2-\epsilon$ in space and $1/4-\epsilon$ in time.

As well documented in \cite{Jentzen2019}, for stochastic evolution equations with non-globally monotone nonlinearity (i.e., \eqref{monotone condition} is not satisfied), strong convergence rates are generally not attainable for numerical solutions derived from traditional numerical approximations. The specific nonlinearity exhibited by the SBE falls into this challenging category. Consequently, the absence of a proven strong convergence rate for numerical solutions of the SBE driven by space-time white noise should not come as a surprise. In this paper, we overcome the difficulty caused by the Burgers-type nonlinearity of SBHEs by using the global one-sided Lipschitz condition the cubic nonlinearity offers and applying the tamed exponential integrator scheme. This approach allows us to establish, under a moderate assumption on the lower bound of $\nu$ ($\nu>1/6$), a generalized monotone condition of nonlinearities in the SBHE, i.e., Lemma \ref{Lem:1}. This is a fundamental property in determining the convergence rate.

The rest of this paper is organized as follows. In Section \ref{Sec:Preliminary}, we introduce a set of essential configurations and underlying assumptions. 
We also present the well-posedness of the SBHE and some regularity properties for its mild solution. 
In Section \ref{Sec:Spatial discrete} the strong convergence rate is obtained for spatial discretization. In Section \ref{Sec:Full discrete}, we develop a nonlinear-tamed exponential integrator for time discretization and derive the strong convergence rates of the fully discretized scheme.
Finally, we provide a numerical example to verify the theoretical findings in Section \ref{Sec:Numerical example}.

\section{Preliminary}\label{Sec:Preliminary}
Let $\left( U , \langle \cdot , \cdot \rangle_{U} , \| \cdot \|_{U} \right)$ be a real separable Hilbert space. We denote by $\mathcal{L}(U)$ the space consisting of all bounded linear operators on $U$ endowed with the induced operator norm $\|\cdot\|_{\mathcal{L}(U)}$. 
A bounded linear operator $T: U \rightarrow U$ is called Hilbert-Schmidt if 
$$\| T \|_{\text{HS}(U)} := \left( \sum_{k \in \mathbb{N}} \| T \eta_{k} \|^2_{U} \right)^{\frac{1}{2}} < \infty, $$
where $\{\eta_k\}_{k \in \mathbb{N}}$ is an orthonormal basis of $U$. 
It is well known that the $\| T \|_{\text{HS}(U)}$ does not depend on the particular choice of orthonormal basis $\{\eta_k\}_{k \in \mathbb{N}}$ of $U$. 
Recall $\mathcal{U}=(0,1)$, then by $L^p(\mathcal{U})$, $p \geq 1$, we denote the space of $\mathbb{R}$-valued, $p$-time integrable functions endowed with the usual norm $\|\cdot\|_{L^p(\mathcal{U})}$ (sometimes $\|\cdot\|_{L^p}$ for short). 
When $p=2$, we simply write $H = L^2(\mathcal{U})$ with the inner product $\langle \cdot , \cdot \rangle$. 
Given a Banach space $(V, \|\cdot\|_{V})$, we denote by $L^p(\Omega; V)$ the space consisting of all $V$-valued, $L^p$ integrable random variables, endowed with the norm $$\|v\|_{L^p(\Omega; V)} = \left(\mathbb{E} \|v\|_{V}^p\right)^{\frac{1}{p}}.$$  
Throughout this paper, we use $C$ to denote a generic positive constant, which may differ from one line to another but is always independent of the discretization parameters. If necessary, we add subscripts to the constant $C$  to indicate the parameters related to it.

Let $-A: \mathrm{dom}(A) \subset H \rightarrow H$ be the Dirichlet Laplacian defined by $-A u = \Delta u$ with $u \in \mathrm{dom}(A) := H^2(\mathcal{U}) \cap H_0^1(\mathcal{U})$. 
Here $H^k(\mathcal{U})$, $k \in \mathbb{N}$, represents the classical Sobolev space and $H^1_0(\mathcal{U}) = \{w | w \in H^1(\mathcal{U}), w|_{\partial \mathcal{U}} = 0 \}$. 
We denote by $H^{-1}(\mathcal{U})$ the dual space to $H^1_0(\mathcal{U})$, and by $\langle \cdot , \cdot \rangle_{H^{-1} , H^1_0}$ the pairing between $H^{-1}(\mathcal{U})$ and $H^1_0(\mathcal{U})$. 
Denote the orthonormal basis of $H$ by $\{\phi_k(x) = \sqrt{2} \sin (k\pi x)$, $x\in \mathcal{U}\}_{k \in \mathbb{N}}$. Then, $A \phi_k = \lambda_{k} \phi_k$ with $\lambda_{k} = k^2 \pi^2$. 
The fractional powers of $A$, i.e., the operator $A^{s/2}$, 
is defined following the same way as in \cite[Appendix B.2]{Kruse2014}. 
In this way, we obtain a family of separable Hilbert spaces $\dot{H}^s$, $s \in \mathbb{R}$, by setting $\dot{H}^s := \mathrm{dom}(A^{s/2})$ equipped with the inner product $\langle u, v \rangle_{\dot{H}^s} := \langle A^{s/2} u , A^{s/2} v \rangle$ and the norm $\|u\|_{\dot{H}^s} = \langle u, u \rangle_{\dot{H}^s}^{1/2}$. 
Through \cite{Kruse2014,Triebel1995} we know that $\dot{H}^0 = L^2(\mathcal{U})$, $\dot{H}^1 = H_0^1(\mathcal{U})$ and $\dot{H}^{-1} = H^{-1}(\mathcal{U})$ with equivalent norms.

Hereafter we use a unified symbol $\mathcal{D}$ to  denote the weak derivative $\mathcal{D} = \partial / \partial x$ in $H^1_0(\mathcal{U})$, and the distributional derivative in $L^2(\mathcal{U})$ defined by $(\mathcal{D}u)(v) = - \langle u , \partial v / \partial x \rangle$ for every $u \in L^2(\mathcal{U})$ and $v \in H^1_0(\mathcal{U})$. 
Let $f: L^6 (\mathcal{U}) \rightarrow L^2 (\mathcal{U})$ and $B : L^4 (\mathcal{U}) \rightarrow H^{-1} (\mathcal{U})$ be respectively deterministic mappings given by
\begin{align*}
	f(u)(x) &:= \nu u(x) (1-u(x)) (u(x)-\theta),  \\
	\langle B(u) , v \rangle_{H^{-1} , H_{0}^{1}} 
	&:= \frac{1}{2} (\mathcal{D} u^2)(v)
	= -\frac{1}{2} \langle u^2 , \frac{\partial}{\partial x} v \rangle, \quad  \forall \, v \in H_{0}^{1} (\mathcal{U}). 
\end{align*}
If $u \in H_{0}^{1} (\mathcal{U})$, then $$B(u)(x) = u(x) \frac{\partial}{\partial x}u(x).$$
It is easy to verify that 
\begin{equation}\label{nonlinearity}
	\begin{aligned}
		\|f(u)\| &\leq C_{\nu,\theta} \left( \|u\|_{L^6}^3 + 1 \right), \quad u \in L^6(\mathcal{U}), \\
		\|f(u)-f(v)\| &\leq C_{\nu,\theta} \left( \|u\|_{L^\infty}^2 + \|v\|_{L^\infty}^2 + 1 \right) \|u-v\|, \quad u, v \in L^\infty(\mathcal{U}). 
	\end{aligned} 
\end{equation}
It is well known that $-A$ generates an analytic semigroup $S(t) = e^{-tA}$, $t \geq 0$, on $H$ (see, e.g. \cite{Pazy1983}), which satisfies 
\begin{align}
	\|A^\gamma S(t)\|_{\mathcal{L}(H)} &\leq C t^{-\gamma}, \quad  t > 0 , \ \gamma \geq 0, \label{Est:semigroup 1} \\
	\|A^{-\rho} (I-S(t))\|_{\mathcal{L}(H)} &\leq C t^{\rho}, \quad  t > 0 , \ \rho \in [0,1]. \label{Est:semigroup 2}
\end{align}
Through \cite{Blomker2013-SIAM} and \cite[Appendix B.2]{Kruse2014}, we know that the spectral structure of $A$ carries over to $S(t)$, so the semigroup $S(t)$ has a natural extension on $\dot{H}^r$ with $r<0$. We still denote this extension by $S(t)$, which can be viewed as a mapping $S(\cdot): (0,T] \rightarrow \mathcal{L}(H^{-1}(\mathcal{U}); L^p(\mathcal{U}))$ given by 
$$(S(t)w)(x) := \sum_{k=1}^{\infty} e^{-t \lambda_k} \langle w , \phi_k \rangle_{H^{-1}, H^1_0} \phi_k(x)$$ 
for every $x \in \mathcal{U}$ and $w \in H^{-1}(\mathcal{U})$. 
Thus for every $t \in (0,T]$ and $p \geq 2$, we can define the operator $S(t)B(\cdot): L^4(\mathcal{U}) \rightarrow L^p(\mathcal{U})$ by 
\begin{equation*}
	S(t)B(u) := \frac{1}{2} S(t) \mathcal{D} u^2 
	:= \frac{1}{2} \sum_{k=1}^{\infty} e^{-t \lambda_k} 	\langle \mathcal{D} u^2 , \phi_k \rangle_{H^{-1}, H^1_0} \phi_k
	= - \frac{1}{2}  \sum_{k=1}^{\infty} e^{-t \lambda_k} 	\langle u^2 , \mathcal{D} \phi_k \rangle \phi_k . 
\end{equation*}
For $u \in L^4(\mathcal{U})$ and $t>0$, by the orthogonality of $\phi_k$, $\sup_{x\geq0}x e^{-x} \leq C$, and the identity $$\|\mathcal{D}u^2\|_{H^{-1}}^2 = \sum_{k=1}^{\infty} |\langle \mathcal{D} u^2 , \phi_k \rangle_{H^{-1},H^1_0}|^2 \lambda_k^{-1}$$ cf. \cite{Blomker2013-SIAM} and \cite[Theorem B.8]{Kruse2014}, we have 
\begin{equation*}
	\| S(t) B(u) \|^2 = \frac{1}{4}  \sum_{k=1}^{\infty} \lambda_k e^{-2t \lambda_k} |\langle u^2 , \mathcal{D} \phi_k \rangle|^2 \lambda_k^{-1}
	\leq C t^{-1} \sum_{k=1}^{\infty} |\langle u^2 , \mathcal{D} \phi_k \rangle|^2 \lambda_k^{-1} \leq C t^{-1} \|\mathcal{D}u^2\|_{H^{-1}}^2
	= C t^{-1} \|u\|_{L^4}^4. 
\end{equation*}
This, together with the semigroup property of $S(t)$, \eqref{Est:semigroup 1} and \eqref{Est:semigroup 2}, implies that for $u \in L^4(\mathcal{U})$ and $\varsigma \geq 0$, 
\begin{equation}\label{Est:S(t)B}
	\|S(t) B(u) \|_{\dot{H}^\varsigma} 
	= \left\| A^{\frac{\varsigma}{2}} S(\tfrac{t}{2}) S(\tfrac{t}{2}) B(u) \right\| 
	\leq C_{\varsigma} t^{-\frac{\varsigma+1}{2}} \|u\|_{L^4}^2, \quad t > 0, 
\end{equation}
and for $0 < s \leq t$, $\gamma - \beta \in [0,1]$, 
\begin{equation}\label{Est:(I-S(t)) B}
	\left\|(S(t)-S(s)) B(u)\right\|_{\dot{H}^\beta}  
	= \left\|A^{\frac{\beta-\gamma}{2}} (I-S(t-s)) A^{\frac{\gamma}{2}} S(s) B(u) \right\|
	\leq C_{\gamma,\beta} (t-s)^{\frac{\gamma-\beta}{2}} s^{-\frac{\gamma+1}{2}} \|u\|_{L^4}^2. 
\end{equation}

\begin{Assumption}\label{Asp:nu}
	Let $\nu>1/6$. 
\end{Assumption}
We note that $\nu>1/6$ can ensure a type of monotonicity of nonlinearities of the SBHE, as shown in the following lemma, which is a fundamental property when deriving the convergence rate of numerical methods.
\begin{Lemma}\label{Lem:1}
	Under Assumption \ref{Asp:nu}, for $u, v \in H_{0}^{1} (\mathcal{U})$, it holds that 
	\begin{equation*}
		\left\langle u-v , B(u) - B(v) + f(u) - f(v) \right\rangle < \frac{1}{2} \left\| u - v \right\|_{\dot{H}^1}^2
		+ C_{\nu, \theta} \left\|  u - v \right\|^2. 
	\end{equation*}
\end{Lemma}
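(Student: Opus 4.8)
The plan is to split the inner product into the Burgers-type part $\langle u-v,\,B(u)-B(v)\rangle$ and the cubic part $\langle u-v,\,f(u)-f(v)\rangle$, bound each separately, and then combine the two estimates using the assumption $\nu>\tfrac16$. Write $w:=u-v\in H^1_0(\mathcal U)$ throughout.

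First I would treat the Burgers term. Since $B(u)-B(v)=\tfrac12(\mathcal D u^2-\mathcal D v^2)=\tfrac12\mathcal D\big((u-v)(u+v)\big)=\tfrac12\mathcal D\big(w(u+v)\big)$, and $w$ vanishes on $\partial\mathcal U$, an integration by parts gives
\begin{equation*}
	\langle w,\,B(u)-B(v)\rangle=\tfrac12\langle w,\,\mathcal D(w(u+v))\rangle=-\tfrac12\langle \mathcal D w,\,w(u+v)\rangle,
\end{equation*}
which I would bound by $\tfrac12\|\mathcal D w\|\,\|w\|_{L^\infty}\,\|u+v\|$ (or by $\tfrac12\|\mathcal D w\|\,\|w\|_{L^4}\,\|u+v\|_{L^4}$). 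The key analytic input here is the one-dimensional Gagliardo--Nirenberg/Agmon interpolation inequality $\|w\|_{L^\infty}\le C\|w\|^{1/2}\|\mathcal D w\|^{1/2}$ (valid on $H^1_0(\mathcal U)$), together with Young's inequality, to absorb the resulting $\|\mathcal D w\|^{3/2}$ term: for any $\varepsilon>0$,
\begin{equation*}
	\langle w,\,B(u)-B(v)\rangle\le C\|\mathcal D w\|^{3/2}\|w\|^{1/2}\big(\|u\|+\|v\|\big)\le \varepsilon\|\mathcal D w\|^2+C_\varepsilon\big(\|u\|^4+\|v\|^4\big)\|w\|^2.
\end{equation*}
Choosing $\varepsilon$ small (say $\varepsilon<\tfrac12$) leaves room for the diffusion term $\tfrac12\|w\|_{\dot H^1}^2$; the price is a constant depending on $u,v$ through lower-order norms. (If a genuinely $u,v$-independent constant is wanted one instead pairs the $\|\mathcal D w\|\|w\|_{L^\infty}$ estimate against the negative contribution coming from $f$ — see next paragraph.)

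Next comes the cubic term, where the assumption $\nu>\tfrac16$ is used. Expanding $f(u)-f(v)=\nu\big[(u-v)(\theta+(1+\theta)(u+v)-u^2-uv-v^2)\big]$ — more cleanly, write $f(s)=\nu(-s^3+(1+\theta)s^2-\theta s)$ so that $f(u)-f(v)=\nu w\big(-(u^2+uv+v^2)+(1+\theta)(u+v)-\theta\big)$. The dominant term is $-\nu\langle w^2,\,u^2+uv+v^2\rangle$; since $u^2+uv+v^2=\tfrac12(u^2+v^2)+\tfrac12(u+v)^2\ge0$ pointwise, this contributes a nonpositive term, so $\langle w,\,f(u)-f(v)\rangle\le -\tfrac{\nu}{2}\langle w^2,(u+v)^2\rangle+\nu(1+\theta)\langle w^2,u+v\rangle+C_{\nu,\theta}\|w\|^2$, and the cross term is controlled by $\nu(1+\theta)|w|^2|u+v|\le \tfrac{\nu}{2}|w|^2(u+v)^2+\tfrac{\nu(1+\theta)^2}{2}|w|^2$ pointwise, so in fact $\langle w,\,f(u)-f(v)\rangle\le C_{\nu,\theta}\|w\|^2$. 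This is the global one-sided Lipschitz property of $f$ and holds already on $H=L^2(\mathcal U)$ for $u,v\in L^\infty$ by density — for the lemma it suffices on $H^1_0$.

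The main obstacle is the bookkeeping in combining the two bounds so that the coefficient of $\|\mathcal D w\|^2=\|w\|_{\dot H^1}^2$ comes out strictly below $\tfrac12$ while keeping the remainder of the form $C_{\nu,\theta}\|w\|^2$; this is exactly where $\nu>\tfrac16$ enters quantitatively. The sharper route — the one that makes $\tfrac16$ the natural threshold and avoids any $u,v$-dependence in the constant — is not to apply Young's inequality crudely to the Burgers term but to keep the negative term $-\tfrac{\nu}{2}\langle w^2,(u+v)^2\rangle=-\tfrac{\nu}{2}\|w(u+v)\|^2$ from $f$ and play it directly against the Burgers estimate written as $\tfrac12|\langle \mathcal D w,\,w(u+v)\rangle|\le \tfrac12\|\mathcal D w\|\,\|w(u+v)\|\le \tfrac{a}{2}\|\mathcal D w\|^2+\tfrac{1}{8a}\|w(u+v)\|^2$ for a free parameter $a>0$: the $\|w(u+v)\|^2$ terms cancel provided $\tfrac{1}{8a}\le\tfrac{\nu}{2}$, i.e. $a\ge\tfrac{1}{4\nu}$, and then the diffusion coefficient is $\tfrac{a}{2}$; picking $a=\tfrac{1}{4\nu}$ one needs $\tfrac{a}{2}=\tfrac{1}{8\nu}<\tfrac12$, i.e. $\nu>\tfrac14$ — so to reach the stated $\tfrac16$ one must be slightly more careful, e.g. split $\tfrac12\|\mathcal D w\|\,\|w(u+v)\|$ as $\tfrac{a}{2}\|\mathcal D w\|^2+\tfrac{1}{8a}\|w(u+v)\|^2$ but only demand $\tfrac14+\tfrac{1}{8a}\le\tfrac{\nu}{2}$ after also extracting a $\tfrac14\|w(u+v)\|^2$-worth of slack from rewriting $u^2+uv+v^2$, or optimize the pointwise Young split of the cross term. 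I would carry out this optimization explicitly at the end; everything else (integration by parts, the interpolation inequality, pointwise positivity of $u^2+uv+v^2$, and Young's inequality) is routine, and the strict inequality in the statement comes for free from never saturating Young.
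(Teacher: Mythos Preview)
Your first route via Gagliardo--Nirenberg is a dead end for this lemma: it produces a constant of the form $C_\varepsilon(\|u\|^4+\|v\|^4)$ in front of $\|w\|^2$, whereas the statement requires a constant $C_{\nu,\theta}$ depending \emph{only} on $\nu,\theta$. So this part should be discarded, as you yourself suspect.

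Your ``sharper route'' is the right idea, but as written it has a genuine gap that stops you at $\nu>\tfrac14$ rather than $\nu>\tfrac16$: when you pass from $-\nu\langle w^2,\,u^2+uv+v^2\rangle$ to $-\tfrac{\nu}{2}\langle w^2,(u+v)^2\rangle$ you are discarding the nonnegative piece $\tfrac{\nu}{2}\langle w^2,\,u^2+v^2\rangle$, and that lost negativity is precisely what is needed to push the threshold down to $\tfrac16$. The paper closes this gap cleanly by \emph{not} introducing a free parameter $a$ at all. It fixes the Young split so that the $\dot H^1$ coefficient is exactly $\tfrac12$ (namely $\tfrac12\|\mathcal D w\|\,\|u^2-v^2\|\le \tfrac12\|\mathcal D w\|^2+\tfrac18\|u^2-v^2\|^2$), keeps the \emph{full} cubic contribution $-\nu\langle w^2,\,u^2+uv+v^2\rangle$, and combines the two pointwise. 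The resulting integrand against $w^2$ is
\[
\tfrac18(u+v)^2-\nu(u^2+uv+v^2)+\nu(1+\theta)(u+v)-\nu\theta
= -\Big[(\nu-\tfrac18)u^2+(\nu-\tfrac18)v^2+(\nu-\tfrac14)uv\Big]+\text{(linear in }u,v)+\text{const}.
\]
The quadratic form $(\nu-\tfrac18)(u^2+v^2)+(\nu-\tfrac14)uv$ is positive definite in $(u,v)$ if and only if $\nu>\tfrac18$ and $(\nu-\tfrac18)^2>\tfrac14(\nu-\tfrac14)^2$, which simplifies to $\tfrac18\nu(6\nu-1)>0$, i.e.\ $\nu>\tfrac16$. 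Once this holds, the quadratic-plus-linear expression in $(u,v)$ is bounded below by some constant $-m$, so adding $m$ makes it nonnegative and one obtains
\[
\langle w,\,B(u)-B(v)+f(u)-f(v)\rangle\le \tfrac12\|w\|_{\dot H^1}^2+(m-\nu\theta)\|w\|^2.
\]
So the ``optimization'' you leave for the end is really just: take $a=1$ and do \emph{not} split $u^2+uv+v^2$ into $(u+v)^2$ and a remainder; instead examine the combined quadratic form directly. Everything else in your sketch (integration by parts, pointwise algebra, bounding a positive-definite quadratic plus linear terms below) is correct.
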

\begin{proof}
	The main idea of the proof is to utilize the cubic nonlinearity to reduce the impact of the Burgers-type nonlinearity.
	Through the integral by parts and $ab \leq a^2 + b^2/4$, we have  
	\begin{align*}
		&\left\langle u-v , B(u) - B(v) + f(u) - f(v) \right\rangle \\
		= \, & -\frac{1}{2} \left\langle \mathcal{D} (u-v) , u^2-v^2\right\rangle  + \left\langle u-v , f(u) - f(v) \right\rangle \\
		\leq \ & \frac{1}{2} \left\| \mathcal{D} u - \mathcal{D} v \right\|^2
		+ \frac{1}{8} \left\|  u^2 - v^2 \right\|^2
		+ \nu \left\langle u-v , -u^3-\theta u + (1+\theta) u^2 
		+ v^3+\theta v - (1+\theta) v^2 \right\rangle\\
		= \ & \frac{1}{2} \left\| u - v \right\|_{\dot{H}^1}^2
		-\nu \theta \left\|  u - v \right\|^2
		- \left\langle (u-v)^2 , \left(\nu-\frac{1}{8}\right) u^2 
		+ \left(\nu-\frac{1}{8}\right) v^2 
		+ \left(\nu-\frac{1}{4}\right) uv
		- \nu (1+\theta) (u+v) \right\rangle.  
	\end{align*}
	Since $\nu>1/6$, 
	the function 
	$$g(x,y)=\left(\nu-\frac{1}{8}\right)x^2 + \left(\nu-\frac{1}{8}\right) y^2 
		+ \left(\nu-\frac{1}{4}\right) xy - \nu (1+\theta)(x+y)$$ 
	has a unique global minimum and
		\begin{align*}
			\min\limits_{(x,y)\in\mathbb{R}^2} g(x,y) = -\frac{\nu^2(1+\theta)^2}{3\nu-1/2}<0. 
		\end{align*} 
		Thus, 
		\begin{align*}
			\left\langle u-v , B(u) - B(v) + f(u) - f(v) \right\rangle 
			&\leq \frac{1}{2} \left\| u - v \right\|_{\dot{H}^1}^2
			-\nu \theta \left\|  u - v \right\|^2
			+ \frac{\nu^2(1+\theta)^2}{3\nu-1/2} \|u-v\|^2 \\ 
			&\leq \frac{1}{2} \left\| u - v \right\|_{\dot{H}^1}^2
			+ C_{\nu, \theta} \left\|  u - v \right\|^2, 
	\end{align*}
	which completes the proof. 
\end{proof}

Let $\{W(t)\}_{t \in [0,T]}$ be a cylindrical Wiener process on the stochastic basis $\left( \Omega, \mathcal{F}, \mathbb{P}, \{\mathcal{F}_t\}_{t \in [0,T]} \right)$, which can be formally represented by
\begin{equation*}
	W(t) := \sum_{k=1}^{\infty} \beta_k(t) \phi_k. 
\end{equation*}
Here $\{\beta_k(t)\}_{k \in \mathbb{N}}$ is a sequence of independent standard Brownian motions.

\begin{Assumption}\label{Asp:Initial value}
	The initial value $u_0: \Omega \rightarrow H$ is an $\mathcal{F}_0$-measurable, $H$-valued random variable. 
	In addition, for sufficiently large positive integer $\tilde{p}$ and $\alpha \in [0, 1/2)$,  
	\begin{equation*}
		\| u_0 \|_{L^{\tilde{p}}(\Omega; \dot{H}^{\alpha})} + \| u_0 \|_{L^{\tilde{p}}(\Omega; L^{\infty})} < \infty. 
	\end{equation*}
\end{Assumption}
\noindent Under the above notations, we can rewrite the concrete problem \eqref{Burgers1} as   
\begin{equation}\label{Burgers2}
	\left\{
	\begin{aligned}
		&du(t) = \left( -Au(t) + B(u(t)) + f(u(t)) \right) dt + dW(t), \quad t \in (0,T], \\
		&u(0) = u_0. 
	\end{aligned}
	\right.
\end{equation}
We denote by $\mathcal{O}_t$ the stochastic convolution, given by
\begin{equation*}
	\mathcal{O}_t := \int_{0}^{t} S(t-s) dW(s), 
\end{equation*}
which has been extensively studied, see, e.g., \cite{Brehier2019-1,Cai2021,Wang2020,Wang2015}. 
Here, we list some of regularity properties of $\mathcal{O}_t$ that will be used later. 
\begin{Lemma}\label{Le:regularity O}
	For $p \geq 2$ and $\alpha \in [0 , 1/2)$, it holds that 
	\begin{equation*}
		\sup\limits_{t \in [0,T]} \| \mathcal{O}_t \|_{L^p(\Omega; \dot{H}^\alpha)} 
		+ \mathbb{E} \Big[ \sup\limits_{t \in [0,T]} \|\mathcal{O}_t \|_{L^\infty}^p \Big] < \infty, 
	\end{equation*}	
	and for any $\beta \in [0, \alpha]$ and $0 \leq s < t \leq T$, 
	\begin{equation}\label{Est:Holder continuous of O}
		\| \mathcal{O}_t - \mathcal{O}_s \|_{L^p(\Omega; \dot{H}^{\beta})} 
		\leq C (t-s)^{\frac{\alpha-\beta}{2}}. 
	\end{equation}
\end{Lemma}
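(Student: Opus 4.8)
The plan is to reduce all three assertions to $L^2(\Omega)$ estimates obtained from It\^o's isometry together with explicit bounds on eigenvalue series, and then to promote these to arbitrary $p\ge 2$ by exploiting that $\mathcal O_t$ — and each increment $\mathcal O_t-\mathcal O_s$ — is a centered Gaussian random variable in the relevant Hilbert space, so that all its moments are comparable (Kahane–Khintchine inequality, or Fernique's theorem); the finiteness of the $L^2(\Omega)$ bound is exactly what guarantees that these objects genuinely take values in that Hilbert space. The one assertion to which this recipe does not apply directly is the $L^\infty$-in-space bound with the supremum \emph{inside} the expectation, and for it I would use the factorization method.

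For the $\dot H^\alpha$ bound, It\^o's isometry and $A\phi_k=\lambda_k\phi_k$ give
\[
\|\mathcal O_t\|_{L^2(\Omega;\dot H^\alpha)}^2=\int_0^t\|A^{\alpha/2}S(t-r)\|_{\mathrm{HS}(H)}^2\,dr=\sum_{k=1}^{\infty}\lambda_k^{\alpha}\,\frac{1-e^{-2t\lambda_k}}{2\lambda_k}\le\frac12\sum_{k=1}^{\infty}\lambda_k^{\alpha-1},
\]
which is finite and independent of $t$ because $\lambda_k=k^2\pi^2$ and $2(1-\alpha)>1$; the $L^p(\Omega)$ statement then follows by Gaussian moment equivalence. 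For $\mathbb E[\sup_{t\le T}\|\mathcal O_t\|_{L^\infty}^p]$, I would fix a constant $\kappa\in(0,\tfrac14)$ and write the factorization identity $\mathcal O_t=c_\kappa\int_0^t(t-s)^{\kappa-1}S(t-s)Y_s\,ds$ with $Y_s=\int_0^s(s-r)^{-\kappa}S(s-r)\,dW(r)$ and $c_\kappa>0$. Using $\sum_ke^{-2\sigma\lambda_k}\le C\sigma^{-1/2}$ and $|\phi_k|\le\sqrt2$ one gets $\sup_{s\le T,\,x\in\mathcal U}\mathbb E|Y_s(x)|^2\le C\int_0^s(s-r)^{-2\kappa-1/2}\,dr<\infty$ — this is precisely where $\kappa<\tfrac14$ is needed — and integrating the pointwise Gaussian moments over $x$ yields $\sup_{s\le T}\mathbb E\|Y_s\|_{L^q}^q<\infty$ for every finite $q$, hence $\sup_{s\le T}\mathbb E\|Y_s\|_{L^q}^p<\infty$ whenever $p\le q$. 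Combining the one-dimensional heat-kernel smoothing $\|S(r)\|_{\mathcal L(L^q,L^\infty)}\le Cr^{-1/(2q)}$ with H\"older's inequality in $s$ gives $\|\mathcal O_t\|_{L^\infty}\le C\big(\int_0^t(t-s)^{(\kappa-1-\frac1{2q})\frac{p}{p-1}}\,ds\big)^{\frac{p-1}{p}}\big(\int_0^T\|Y_s\|_{L^q}^p\,ds\big)^{1/p}$; choosing $q$ large and $\kappa\in(\tfrac1{2q}+\tfrac1p,\tfrac14)$ (possible once $p>4$, and the remaining $p\le4$ then follow by Jensen's inequality in $\Omega$) makes the time integral finite uniformly in $t\le T$, so $\mathbb E[\sup_{t\le T}\|\mathcal O_t\|_{L^\infty}^p]\le C\int_0^T\mathbb E\|Y_s\|_{L^q}^p\,ds<\infty$.

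For the temporal bound \eqref{Est:Holder continuous of O}, I would decompose $\mathcal O_t-\mathcal O_s=\int_s^tS(t-r)\,dW(r)+\int_0^s\big(S(t-r)-S(s-r)\big)\,dW(r)=:I_1+I_2$. It\^o's isometry gives $\|I_1\|_{L^2(\Omega;\dot H^\beta)}^2=\tfrac12\sum_k\lambda_k^{\beta-1}(1-e^{-2(t-s)\lambda_k})$, and the elementary inequality $1-e^{-x}\le x^{\alpha-\beta}$ for $x\ge0$ (valid since $\alpha-\beta\in[0,1]$) bounds this by $C(t-s)^{\alpha-\beta}\sum_k\lambda_k^{\alpha-1}$. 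Similarly $\|I_2\|_{L^2(\Omega;\dot H^\beta)}^2=\int_0^s\sum_k\lambda_k^{\beta}e^{-2(s-r)\lambda_k}(1-e^{-(t-s)\lambda_k})^2\,dr$, and using $(1-e^{-(t-s)\lambda_k})^2\le(1-e^{-(t-s)\lambda_k})^{\alpha-\beta}\le((t-s)\lambda_k)^{\alpha-\beta}$ reduces it to $C(t-s)^{\alpha-\beta}\int_0^s\sum_k\lambda_k^{\alpha}e^{-2(s-r)\lambda_k}\,dr\le C(t-s)^{\alpha-\beta}\sum_k\lambda_k^{\alpha-1}$. Adding the two pieces and applying Gaussian moment equivalence yields $\|\mathcal O_t-\mathcal O_s\|_{L^p(\Omega;\dot H^\beta)}\le C(t-s)^{(\alpha-\beta)/2}$.

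The routine part is the series bookkeeping; I expect the main obstacle to be the $L^\infty$ bound with the time-supremum inside the expectation, where It\^o's isometry alone is insufficient. The factorization trick succeeds precisely because, after the identity above, the supremum over $t$ is of a deterministic convolution kernel acting on the process $Y$, which can be pulled out by a H\"older inequality \emph{before} taking expectations; the only genuine work is arranging the exponents so that the singularity $(t-s)^{\kappa-1-1/(2q)}$ is $\tfrac{p}{p-1}$-integrable while $Y$ still has finite $L^q(\Omega\times\mathcal U)$ moments, which forces $\kappa<\tfrac14$ and is why one argues first for large $p$. (An alternative avoiding factorization would be a Kolmogorov-continuity argument based on estimating $\mathbb E|\mathcal O_t(x)-\mathcal O_s(y)|^p$ jointly in $t,s,x,y$, but the factorization route is cleaner here.)
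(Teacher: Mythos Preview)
Your proof is correct and follows the standard route for these estimates. Note, however, that the paper does not actually prove this lemma: it simply states the result and refers to the literature (the references \cite{Brehier2019-1,Cai2021,Wang2020,Wang2015}). So there is no ``paper's own proof'' to compare against beyond the fact that your argument is precisely the kind of argument those references contain --- It\^o isometry plus eigenvalue-series computations for the $\dot H^\alpha$ and H\"older bounds, Gaussian hypercontractivity to pass from $p=2$ to general $p$, and the Da~Prato--Zabczyk factorization method for the pathwise $L^\infty$ bound. Your handling of the exponents in the factorization step (forcing $\kappa<\tfrac14$ from the $Y$ side and $\kappa>\tfrac1{2q}+\tfrac1p$ from the kernel side, then bootstrapping down to small $p$) is exactly the right bookkeeping, and the decomposition $\mathcal O_t-\mathcal O_s=I_1+I_2$ with the inequality $1-e^{-x}\le x^{\alpha-\beta}$ is the clean way to get the H\"older estimate.
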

The well-posedness and the boundedness of moments of mild solution to \eqref{Burgers2} can be derived following a similar argument as in \cite{Prato1994,Prato1995,Mohan2022}. 
\begin{Theorem}\label{Th:moment bound}
	Under Assumption \ref{Asp:Initial value}, for $p \geq 2$, \eqref{Burgers2} has a unique mild solution with continuous sample paths, given by
	\begin{equation}\label{mild solution}
		u(t) = S(t) u_0 + \int_{0}^{t} S(t-s) B(u(s)) ds 
		+ \int_{0}^{t} S(t-s) f(u(s)) ds
		+ \mathcal{O}_t, \quad \mathbb{P}\text{-a.s.}. 
	\end{equation}
	In addition, for $p, q \geq 2$, there exists a positive constant $C$, such that
	\begin{equation*}
		\sup\limits_{t \in [0,T]} \|u(t)\|_{L^q(\Omega; L^p)}
		\leq C \left( \|u_0\|_{L^q(\Omega; L^p)} + 1 \right). 
	\end{equation*}
\end{Theorem}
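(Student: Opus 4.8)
The plan is to follow the classical route for semilinear SPDEs with a dissipative but non-globally-Lipschitz drift, in the spirit of \cite{Prato1994,Prato1995,Mohan2022}: subtract the stochastic convolution to reduce \eqref{Burgers2} to a pathwise random PDE, solve it locally by a fixed point argument, and then exploit the monotone structure of Lemma \ref{Lem:1} to obtain an a priori bound that simultaneously excludes finite-time blow-up and yields the asserted moment estimate.

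First I would set $v(t):=u(t)-\mathcal{O}_t$. By Lemma \ref{Le:regularity O}, $\mathcal{O}$ has sample paths in $C([0,T];\dot{H}^\alpha)\cap C([0,T];L^\infty)$ for every $\alpha<\tfrac12$, with finite moments of all orders, so it suffices to solve, $\omega$ by $\omega$,
\[
v'(t)=-A v(t)+B\bigl(v(t)+\mathcal{O}_t\bigr)+f\bigl(v(t)+\mathcal{O}_t\bigr),\qquad v(0)=u_0,
\]
in the mild sense. For local well-posedness on a (random) interval $[0,T_0]$ I would run a Banach fixed point iteration in $C([0,T_0];\dot{H}^\alpha)\cap C([0,T_0];L^\infty)$ with $\alpha$ close to $\tfrac12$: the Burgers contribution $\int_0^t S(t-s)B(v(s)+\mathcal{O}_s)\,ds$ is controlled by \eqref{Est:S(t)B} through integrals of the form $\int_0^t (t-s)^{-\sigma}\|v(s)+\mathcal{O}_s\|_{L^4}^2\,ds$ with $\sigma<1$ (using $\dot{H}^\alpha\hookrightarrow L^4$ and $\dot{H}^{1/2+\epsilon}\hookrightarrow L^\infty$ in one dimension), while the cubic contribution is handled by \eqref{Est:semigroup 1} and the first bound in \eqref{nonlinearity}, together with $\|S(t)\|_{\mathcal{L}(L^\infty)}\le 1$ for the initial datum. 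The iteration map is a contraction for $T_0$ small enough, giving a unique local mild solution; adding $\mathcal{O}$ back recovers \eqref{mild solution} on $[0,T_0]$. Uniqueness on any common interval of existence follows from a Gronwall estimate on the difference $w=v_1-v_2$, in which the quadratic Burgers cross-terms are absorbed into the parabolic dissipation $-\|w\|_{\dot{H}^1}^2$ via the one-dimensional Gagliardo--Nirenberg inequality $\|w\|_{L^4}^2\le C\|w\|^{3/2}\|w\|_{\dot{H}^1}^{1/2}$ and Young's inequality, and the cubic difference is handled by the second bound in \eqref{nonlinearity} together with the $L^\infty$-regularity of the two solutions.

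For the global a priori estimate I would work with the spectral Galerkin truncations $v^N:=P_Nv$, where $P_N$ is the orthogonal projection of $H$ onto $\mathrm{span}\{\phi_1,\dots,\phi_N\}$; these solve finite-dimensional ODEs with smooth solutions, so testing with $v^N$ is legitimate. Decomposing $B(v^N+P_N\mathcal{O}_t)+f(v^N+P_N\mathcal{O}_t)$ as $\bigl[B(v^N+P_N\mathcal{O}_t)-B(P_N\mathcal{O}_t)+f(v^N+P_N\mathcal{O}_t)-f(P_N\mathcal{O}_t)\bigr]+\bigl[B(P_N\mathcal{O}_t)+f(P_N\mathcal{O}_t)\bigr]$ and applying Lemma \ref{Lem:1} to the pair $(v^N+P_N\mathcal{O}_t,\,P_N\mathcal{O}_t)$ bounds the first bracket by $\tfrac12\|v^N\|_{\dot{H}^1}^2+C_{\nu,\theta}\|v^N\|^2$, which is absorbed by $\langle v^N,-Av^N\rangle=-\|v^N\|_{\dot{H}^1}^2$; the second bracket is estimated in terms of $\|\mathcal{O}_t\|_{\dot{H}^\alpha}$ (for a fixed $\alpha<\tfrac12$) via \eqref{nonlinearity}, Sobolev embedding, Young's inequality, and the fact that $P_N$ has operator norm at most $1$ on $\dot{H}^\alpha$. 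This yields the pathwise inequality $\tfrac{d}{dt}\|v^N\|^2\le-\tfrac12\|v^N\|_{\dot{H}^1}^2+C\|v^N\|^2+C\bigl(\|\mathcal{O}_t\|_{\dot{H}^\alpha}^6+1\bigr)$, from which, applying the chain rule, taking expectations and invoking Gronwall's inequality, one obtains $\sup_{t\le T}\|v^N(t)\|_{L^q(\Omega;H)}+\|v^N\|_{L^q(\Omega;L^2(0,T;\dot{H}^1))}\le C\bigl(\|u_0\|_{L^q(\Omega;H)}+1\bigr)$ uniformly in $N$, using Lemma \ref{Le:regularity O}. A standard compactness argument (weak-$\ast$ convergence plus an Aubin--Lions type compact embedding) together with the local uniqueness identifies the global mild solution, which establishes the first assertion and the $p=2$ case of the moment bound.

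The $L^p$-in-space bound for general $p\ge 2$ follows from the same Galerkin scheme, now testing with $|v^N|^{p-2}v^N$: the Laplacian supplies the dissipative term $-c_p\bigl\| |v^N|^{p/2}\bigr\|_{\dot{H}^1}^2$, the pure Burgers term $\langle B(v^N),|v^N|^{p-2}v^N\rangle$ vanishes by integration by parts, the Burgers cross-term with $P_N\mathcal{O}_t$ (expressible, up to a constant, as $\langle P_N\mathcal{O}_t\,|v^N|^{p/2},\,\mathcal{D}(|v^N|^{p/2})\rangle$) is absorbed into that dissipation by Gagliardo--Nirenberg and Young's inequalities, and the cubic part of $f$ contributes an additional dissipation $-\nu\|v^N\|_{L^{p+2}}^{p+2}$ dominating all remaining cross-terms and lower-order polynomial terms in $v^N$ and $\mathcal{O}_t$; a Gronwall argument, taking moments, passing to the limit, and adding back $\mathcal{O}$ (which lies in $L^q(\Omega;L^p)$ by Lemma \ref{Le:regularity O} since $\mathcal{U}$ is bounded) then give the stated estimate. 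I expect the main obstacle to be exactly the interplay between the rough Burgers nonlinearity $B$, which maps only into $H^{-1}$, and the limited spatial regularity of the white-noise convolution $\mathcal{O}$: closing the energy estimates hinges on carefully exploiting both the antisymmetry of the Burgers term and the dissipativity of the cubic term (this is where Assumption \ref{Asp:nu} and Lemma \ref{Lem:1} enter), and on working throughout with Galerkin approximations, since the mild solution is not regular enough to justify a direct application of It\^o's formula or a direct energy computation.
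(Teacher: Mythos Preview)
The paper does not supply its own proof of this theorem; it simply states the result and refers to \cite{Prato1994,Prato1995,Mohan2022}. Your overall strategy---subtracting $\mathcal{O}_t$, local fixed point, then a pathwise energy estimate via Galerkin approximations---is exactly the route those references take, so the plan is sound.

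There is, however, one genuine gap. For the $L^2$ a priori bound you invoke Lemma~\ref{Lem:1}, but that lemma is proved \emph{under Assumption~\ref{Asp:nu}} ($\nu>\tfrac16$), whereas Theorem~\ref{Th:moment bound} is stated only under Assumption~\ref{Asp:Initial value}. As written, your argument therefore proves a weaker statement than the one claimed. The fix is easy and is already implicit in your own $L^p$ paragraph: for $p=2$, test directly with $v^N$, use the antisymmetry $\langle B(v^N),v^N\rangle=0$, handle the Burgers cross-terms with $P_N\mathcal{O}_t$ by Gagliardo--Nirenberg and Young (absorbing them into the parabolic dissipation $-\|v^N\|_{\dot H^1}^2$), and let the leading cubic contribution $-\nu\|v^N\|_{L^4}^4$ dominate the remaining polynomial cross-terms. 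None of this requires $\nu>\tfrac16$; only $\nu>0$ is used. So simply drop the appeal to Lemma~\ref{Lem:1} here and run the same computation you describe for general $p$.

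A minor point of notation: you write ``$v^N:=P_Nv$'' and then say these solve finite-dimensional ODEs. The projection $P_Nv$ of the solution does \emph{not} solve a closed ODE (since $P_NB(v)\neq B(P_Nv)$); what you want is the Galerkin \emph{solution}, i.e., the $H^N$-valued solution of $\tfrac{d}{dt}v^N=-A_Nv^N+P_NB(v^N+P_N\mathcal{O}_t)+P_Nf(v^N+P_N\mathcal{O}_t)$ with $v^N(0)=P_Nu_0$. Your subsequent energy estimate is written for this object, so this is only a slip in the definition, but it should be corrected.
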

Owing to above regularity properties of the stochastic convolution and boundedness of moments of mild solution, we can derive the following regularity properties of the mild solution. 
\begin{Theorem}\label{Th:regularity}
	Under Assumption \ref{Asp:Initial value}, for $p \geq 2$ and $\alpha \in [0,1/2)$, it holds that  
	\begin{align}
		\sup\limits_{t \in [0,T]} \| u(t) \|_{L^p(\Omega; \dot{H}^\alpha)} 
		&\leq C \left( \|u_0\|_{L^{3p}(\Omega; L^\infty)}^{3} + \| u_0 \|_{L^p(\Omega; \dot{H}^\alpha)} + 1 \right), \label{Est:Sobolev regularity} \\
		\sup\limits_{t \in [0,T]} \| u(t) \|_{L^p(\Omega; L^{\infty})} 
		&\leq C \left( \|u_0\|_{L^{3p}(\Omega; L^{\infty})}^3 + 1 \right).  \label{Est:L infty regularity}
	\end{align}
	Furthermore, for $\beta \in [0,\alpha]$ and for $0 \leq s < t \leq T$, 
	\begin{equation}\label{Est:Holder regulerity}
		\| u(t) - u(s) \|_{L^p(\Omega; \dot{H}^\beta)}
		\leq C (t-s)^{\frac{\alpha-\beta}{2}} \left( \|u_0 \|_{L^p(\Omega; \dot{H}^\alpha)}
		+ \| u_0 \|_{L^{3p}(\Omega; L^\infty)}^3 + 1 \right). 
	\end{equation}
\end{Theorem}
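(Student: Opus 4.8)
\emph{Proof plan.}
The plan is to substitute the mild solution representation \eqref{mild solution} and estimate the four terms $S(t)u_0$, $\int_0^t S(t-s)B(u(s))\,ds$, $\int_0^t S(t-s)f(u(s))\,ds$ and $\mathcal{O}_t$ separately, in each of the three norms, using the semigroup bounds \eqref{Est:semigroup 1}--\eqref{Est:semigroup 2}, the smoothing estimates \eqref{Est:S(t)B}--\eqref{Est:(I-S(t)) B} for $S(t)B(\cdot)$, the growth bound for $f$ in \eqref{nonlinearity}, the moment bounds of Theorem \ref{Th:moment bound}, and the regularity of $\mathcal{O}_t$ from Lemma \ref{Le:regularity O}. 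Taking $L^p(\Omega)$-norms and applying Minkowski's integral inequality throughout, every space--time moment $\|u(s)\|_{L^{3p}(\Omega;L^6)}$ or $\|u(s)\|_{L^{2p}(\Omega;L^4)}$ is bounded, via Theorem \ref{Th:moment bound}, by $C(\|u_0\|_{L^{3p}(\Omega;L^\infty)}+1)$, and $\|u_0\|_{L^{2p}(\Omega;L^\infty)}^2$ is absorbed into $\|u_0\|_{L^{3p}(\Omega;L^\infty)}^3+1$ by Young's inequality, which is what produces the constants claimed in \eqref{Est:Sobolev regularity}--\eqref{Est:Holder regulerity}.

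For the Sobolev bound \eqref{Est:Sobolev regularity}: the linear term is handled by contractivity of $S(t)$ on $\dot H^\alpha$, i.e.\ $\|S(t)u_0\|_{\dot H^\alpha}\le\|u_0\|_{\dot H^\alpha}$; the Burgers term is bounded by \eqref{Est:S(t)B} with $\varsigma=\alpha$, giving $\int_0^t(t-s)^{-(\alpha+1)/2}\|u(s)\|_{L^4}^2\,ds$, which is a convergent integral since $\alpha<1$; the reaction term is bounded by \eqref{Est:semigroup 1} with $\gamma=\alpha/2$ together with $\|f(u)\|\le C(\|u\|_{L^6}^3+1)$, giving $\int_0^t(t-s)^{-\alpha/2}(\|u(s)\|_{L^6}^3+1)\,ds$; and $\mathcal{O}_t$ is bounded by Lemma \ref{Le:regularity O}. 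For the $L^\infty$ bound \eqref{Est:L infty regularity}: $\|S(t)u_0\|_{L^\infty}\le\|u_0\|_{L^\infty}$ by the maximum principle for the Dirichlet heat semigroup, and the two convolutions are handled by the one-dimensional Sobolev embedding $\dot H^{1/2+\epsilon}\hookrightarrow L^\infty$, which yields $\|S(r)f(u)\|_{L^\infty}\le Cr^{-1/4-\epsilon/2}\|f(u)\|$ and, via \eqref{Est:S(t)B} with $\varsigma=\tfrac12+\epsilon$, $\|S(r)B(u)\|_{L^\infty}\le Cr^{-3/4-\epsilon/2}\|u\|_{L^4}^2$; both kernels are integrable near $0$ for $\epsilon$ small, and the $\mathcal{O}_t$ contribution is the $L^\infty$ part of Lemma \ref{Le:regularity O}.

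For the H\"older estimate \eqref{Est:Holder regulerity} I would write $u(t)-u(s)$ as the sum of $(S(t-s)-I)S(s)u_0$, the two convolution differences, and $\mathcal{O}_t-\mathcal{O}_s$. The linear term is bounded by $\|A^{(\beta-\alpha)/2}(I-S(t-s))\|_{\mathcal L(H)}\|u_0\|_{\dot H^\alpha}\le C(t-s)^{(\alpha-\beta)/2}\|u_0\|_{\dot H^\alpha}$ using \eqref{Est:semigroup 2} (valid since $\alpha-\beta\in[0,1]$). Each convolution difference is split as $\int_s^t S(t-r)(\cdot)\,dr+\int_0^s\big(S(t-r)-S(s-r)\big)(\cdot)\,dr$: the near-diagonal piece is estimated by \eqref{Est:semigroup 1}/\eqref{Est:S(t)B} and produces a power $(t-s)^{(1-\beta)/2}$ (for $B$) or $(t-s)^{(2-\beta)/2}$ (for $f$), each exceeding $(t-s)^{(\alpha-\beta)/2}$ and hence reducible to it via $t-s\le T$; the regularizing piece for the $B$-term uses \eqref{Est:(I-S(t)) B} with intermediate index $\gamma=\alpha$ (so $\gamma-\beta=\alpha-\beta\in[0,1]$ and $(\gamma+1)/2<1$), while for the $f$-term one uses $S(t-r)-S(s-r)=(S(t-s)-I)S(s-r)$ together with \eqref{Est:semigroup 2} and \eqref{Est:semigroup 1}, in both cases leaving an integrable singularity in $(s-r)$ and the factor $(t-s)^{(\alpha-\beta)/2}$. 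Finally $\mathcal{O}_t-\mathcal{O}_s$ is exactly \eqref{Est:Holder continuous of O}.

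The routine part is the bookkeeping of the resulting Beta-type integrals. The one step I expect to be the genuine obstacle is the $L^\infty$ control of the Burgers convolution, where $S(r)B(\cdot):L^4\to L^\infty$ only gains $r^{-3/4-\epsilon}$: this is subcritical precisely because $\alpha<\tfrac12$ is strict, forcing every exponent below $1$, and it is exactly this margin — inherited from the space--time white noise regularity through Lemma \ref{Le:regularity O} and Theorem \ref{Th:moment bound} — on which the whole argument rests.
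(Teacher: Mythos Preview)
Your plan is correct and matches the paper's proof essentially line for line: the same mild-form decomposition, the same use of \eqref{Est:S(t)B} with $\varsigma=\alpha$ for the Burgers term, \eqref{Est:semigroup 1} with $\gamma=\alpha/2$ plus \eqref{nonlinearity} for the reaction term, and the same splitting of $u(t)-u(s)$ into the four pieces you describe, with \eqref{Est:(I-S(t)) B} at intermediate index $\gamma=\alpha$ for the far $B$-piece and \eqref{Est:semigroup 2}--\eqref{Est:semigroup 1} for the far $f$-piece. The only cosmetic difference is that for \eqref{Est:L infty regularity} the paper invokes the embedding $\dot H^{3/4}\hookrightarrow L^\infty$ rather than $\dot H^{1/2+\epsilon}\hookrightarrow L^\infty$; both give integrable singularities for the convolution terms, and your explicit use of the maximum principle for $S(t)u_0$ is exactly what is needed to keep the right-hand side in terms of $\|u_0\|_{L^\infty}$ only (the paper leaves this implicit).
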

\begin{proof}
	Through \eqref{mild solution}, we have 
	\begin{align*}
		\| u(t) \|_{L^p(\Omega; \dot{H}^\alpha)}  
		&\leq \| u_0 \|_{L^p(\Omega; \dot{H}^\alpha)}
		+ \int_{0}^{t} \| S(t-s) B (u(s)) \|_{L^p(\Omega; \dot{H}^\alpha)} ds \\
		&\quad + \int_{0}^{t} \| S(t-s) f(u(s)) \|_{L^p(\Omega; \dot{H}^\alpha)} ds 
		+ \left\| \mathcal{O}_t \right\|_{L^p(\Omega; \dot{H}^\alpha)}. 
	\end{align*}
	Taking $\varsigma = \alpha$ in \eqref{Est:S(t)B}, then using H\"older's inequality and Theorem \ref{Th:moment bound}, we have 
	\begin{align*}
		&\int_{0}^{t} \| S(t-s) B (u(s)) \|_{L^p(\Omega; \dot{H}^\alpha)} ds
		\leq C \int_{0}^{t} (t-s)^{-\frac{\alpha+1}{2}} \|u(s)\|_{L^{2p}(\Omega; L^4)}^2  ds \\
		\leq \ & C \sup\limits_{t \in [0,T]} \| u(t) \|_{L^{2p}(\Omega; L^4)}^2 
		\leq C \left( \| u_0 \|_{L^{2p}(\Omega; L^\infty)}^2 + 1 \right). 
	\end{align*}
	By \eqref{Est:semigroup 1} with $\gamma = \alpha / 2$, \eqref{nonlinearity} and Theorem \ref{Th:moment bound}, we get
	\begin{align*}
		&\int_{0}^{t} \| S(t-s) f(u(s)) \|_{L^p(\Omega; \dot{H}^\alpha)} ds 
		\leq C \int_{0}^{t} (t-s)^{-\frac{\alpha}{2}} \| f(u(s)) \|_{L^p(\Omega; H)} ds \\
		\leq \ & C \int_{0}^{t} (t-s)^{-\frac{\alpha}{2}} \left( \| u(s)  \|_{L^{3p}(\Omega; L^6)}^3 + 1 \right) ds 
		\leq C \left( \| u_0 \|_{L^{3p}(\Omega; L^\infty)}^3 + 1 \right).
	\end{align*}
	In view of the boundedness of $\left\| \mathcal{O}_t \right\|_{L^p(\Omega; \dot{H}^\alpha)}$ shown in Lemma \ref{Le:regularity O}, we obtain \eqref{Est:Sobolev regularity}. 
	One can similarly derive \eqref{Est:L infty regularity} by using the Sobolev embedding inequality $\dot{H}^{\frac{3}{4}} \hookrightarrow L^{\infty}$ and the boundedness of $\| \mathcal{O}_t \|_{L^p(\Omega; L^{\infty})}$ shown in Lemma \ref{Le:regularity O}. 
	
	For the proof of \eqref{Est:Holder regulerity}, we first note that
	\begin{align*}
		&\| u(t) - u(s) \|_{L^p(\Omega; \dot{H}^\beta)} \\
		\leq \ & \| S(s) (S(t-s) - I) u_0 \|_{L^p(\Omega; \dot{H}^\beta)} 
		+ \int_{0}^{s} \left\| (S(t-r) - S(s-r)) \left( B(u(r)) + f(u(r)) \right) \right\|_{L^p(\Omega; \dot{H}^\beta)} dr \\
		&+ \int_{s}^{t} \left\| S(t-r) \left( B(u(r)) + f(u(r)) \right) \right\|_{L^p(\Omega; \dot{H}^\beta)} dr
		+ \left\| \mathcal{O}_t -  \mathcal{O}_s \right\|_{L^p(\Omega; \dot{H}^\beta)}. 
	\end{align*}
	Applying \eqref{Est:(I-S(t)) B} with $\gamma = \alpha$ leads to
	\begin{equation*}
		\int_{0}^{s} \left\| (S(t-r) - S(s-r)) B(u(r))  \right\|_{L^p(\Omega; \dot{H}^\beta)} dr 
		\leq C (t-s)^{\frac{\alpha-\beta}{2}} \int_{0}^{s}  (s-r)^{-\frac{\alpha+1}{2}} \left\|u(r)\right\|^2_{L^{2p}(\Omega; L^4)} dr. 
	\end{equation*}
	By taking $\gamma = \alpha/2$ in \eqref{Est:semigroup 1} and taking $\rho = (\alpha - \beta)/2$ in \eqref{Est:semigroup 2}, we obtain
	\begin{equation*}
		\int_{0}^{s} \left\| (S(t-r) - S(s-r)) f(u(r))  \right\|_{L^p(\Omega; \dot{H}^\beta)} dr 
		\leq C (t-s)^{\frac{\alpha-\beta}{2}} \int_{0}^{s}  (s-r)^{-\frac{\alpha}{2}} \left( \left\|u(r)\right\|^3_{L^{3p}(\Omega; L^6)} + 1 \right) dr. 
	\end{equation*}
	It follows from \eqref{Est:S(t)B} with $\varsigma = \beta$ and \eqref{Est:semigroup 1} with $\gamma = \beta/2$ that 
	\begin{align*}
		&\int_{s}^{t} \left\| S(t-r) \left( B(u(r)) + f(u(r)) \right) \right\|_{L^p(\Omega; \dot{H}^\beta)}  dr \\
		\leq \ & C \int_{s}^{t}  (t-r)^{-\frac{\beta+1}{2}}  \left\|u(r)\right\|^2_{L^{2p}(\Omega; L^4)}  dr
		+ C \int_{s}^{t}  (t-r)^{-\frac{\beta}{2}} \left( \left\|u(r)\right\|^3_{L^{3p}(\Omega; L^6)} + 1 \right) dr.  
	\end{align*}
	Finally, by using 
	$$\| S(s) (S(t-s) - I) u_0 \|_{L^p(\Omega; \dot{H}^\beta)} \leq C (t-s)^{\frac{\alpha-\beta}{2}} \| u_0 \|_{L^p(\Omega; \dot{H}^\alpha)} $$ 
	and \eqref{Est:Holder continuous of O}, we obtain the desired result.
\end{proof}

\section{Spatial semi-discretization and error estimates}\label{Sec:Spatial discrete}
In this section, we derive the spectral Galerkin spatial semi-discretization of \eqref{Burgers2} and conduct the corresponding error analysis. 
We introduce a finite dimensional subspace $H^N :=  \text{span} \{ \phi_1, \phi_2, ..., \phi_N \}$, where $N \in \mathbb{N}$ is the dimension of the spectral Galerkin projection space, and define a projection operator $P_N: \dot{H}^s \rightarrow H^N$ by 
\begin{equation*}
	P_N \psi = \sum_{k=1}^{N} \langle \psi , \phi_k \rangle \phi_k, \qquad   \psi \in \dot{H}^s, \quad s \in \mathbb{R}. 
\end{equation*}
It is easy to verify that
\begin{equation}\label{Est:projection}
	\| (P_N - I) \psi \| \leq \lambda_{N+1}^{-\frac{s}{2}} \| \psi \|_{\dot{H}^s}, \qquad  \psi \in \dot{H}^s, \quad s \geq 0. 
\end{equation}

The spectral Galerkin method for \eqref{Burgers2} consists in constructing $u^{N} \in H^N$ such that the following finite dimensional stochastic differential equation holds: 
\begin{align}\label{Burgers N}
	\begin{cases}
		du^{N}(t) 
		= -A_N u^{N}(t) dt + P_N B (u^{N}(t))  dt + P_N f (u^{N}(t))  dt + P_N dW(t) , \ \  t \in (0,T] , \\
		u^N(0) = P_N u_0, 
	\end{cases}
\end{align}
where $A_N=P_N A : H \rightarrow H^N$. It is well known that $-A_N$ generates an analytic semigroup $S_N(t) = e^{-t A_N}$ in $H^N$ \cite{Liu2020}. 
By the definitions of $P_N$ and $B$, the operator $P_N B(\cdot): L^4(\mathcal{U}) \stackrel{B}{\longrightarrow} H^{-1}(\mathcal{U}) \stackrel{P_N}{\longrightarrow} H^{N}$ is given by 
\begin{equation}\label{P_N B}
	P_N B(u) := 
	\sum_{k=1}^{N} \langle B(u) , \phi_k \rangle_{H^{-1} , H^{1}_0} \phi_k = -\frac{1}{2} \sum_{k=1}^{N} \langle u^2 , \mathcal{D} \phi_k \rangle \phi_k, \qquad  \phi_k \in H^N. 
\end{equation}
For $v \in H^N$ and $u \in L^4(\mathcal{U})$, we have
\begin{equation}\label{Est:P_N B}
	\begin{aligned}
		\langle P_N B(u) , v \rangle 
		&=  -\frac{1}{2} \sum_{k=1}^{N} \langle u^2 , \mathcal{D} \phi_k \rangle \langle v , \phi_k \rangle 
		\leq \frac{1}{2} \left( \sum_{k=1}^{N} |\langle u^2 , \mathcal{D} \phi_k \rangle|^2 \lambda_k^{-1} \right)^{\frac{1}{2}} 
		\left( \sum_{k=1}^{N} |\langle v , \phi_k \rangle|^2 \lambda_k \right)^{\frac{1}{2}} \\
		&\leq \frac{1}{2} \|\mathcal{D} u^2\|_{H^{-1}} \|v\|_{\dot{H}^1}
		= \frac{1}{2} \|u\|_{L^4}^2 \|v\|_{\dot{H}^1}. 
	\end{aligned}
\end{equation}
For the special case $u \in H^N \subset L^4(\mathcal{U})$, we define $P_N B(\cdot): H^N \rightarrow H^N$  by $P_N B(u) = P_N \mathcal{D} u^2 /2$. Therefore, for $u, v \in H^N$, we have 
$$\langle P_N B(u) , v \rangle = \frac{1}{2} \langle P_N \mathcal{D} u^2 , v \rangle = \frac{1}{2} \langle \mathcal{D} u^2 , v \rangle = -\frac{1}{2} \langle u^2 , \mathcal{D} v \rangle. $$ 
The mild solution of \eqref{Burgers N} can be expressed using the variation of constant as follows: 
\begin{equation}\label{mild solution N}
	u^{N}(t) = S_{N}(t) P_N u_0 
	+ \int_{0}^{t} S_{N}(t-s) P_N B(u^{N}(s)) ds 
	+ \int_{0}^{t} S_{N}(t-s) P_N f(u^{N}(s)) ds
	+ \mathcal{O}_t^N,  
\end{equation}
where 
$$\mathcal{O}_t^N := \int_{0}^{t} S_N(t-s) P_N dW(s). $$
According to \cite[Lemma 3.1]{Wang2020}, for any $p \geq 2$, it holds that 
\begin{equation}\label{PN O}
	\sup\limits_{t \in [0, T], N \in \mathbb{N}} \|\mathcal{O}_t^N \|_{L^p(\Omega; L^\infty)} < \infty. 
\end{equation}

In the following lemma, we give some smoothing properties of the analytic semigroup $S(t)$. 
\begin{Lemma}\label{Le:PN S}
	For any $t>0$ and $N \in \mathbb{N}$, it holds that 
	\begin{align}
		\left\| P_N S(t) \psi \right\|_{L^{\infty}} &\leq C_{\gamma} t^{\frac{2\gamma-1}{4}} \| \psi \|_{\dot{H}^{\gamma}}, \qquad  \forall \, \psi \in \dot{H}^{\gamma}, \quad \gamma \in [0,1/2), \label{Est:PN S}\\
		\| P_N S(t) \psi \|_{L^p} &\leq C t^{-\frac{p-2}{4p}} \| \psi \|, \qquad \forall \, \psi \in H, \quad p \geq 2, \notag \\
		\| P_N S(t) \psi \|_{L^\infty} &\leq Ct^{-\frac{1}{2}} \| \psi \|_{L^1}, \qquad \forall \, \psi \in L^1(\mathcal{U}), \notag \\
		\| P_N S(t) B(\psi) \|_{L^4} &\leq C t^{-\frac{5}{8}} \| \psi \|_{L^4}^2, \qquad \forall \, \psi \in L^4(\mathcal{U}), \notag \\
		\| P_N S(t) \psi \|_{L^4} &\leq Ct^{-\frac{3}{8}} \| \psi \|_{L^1}, \qquad \forall \, \psi \in L^1(\mathcal{U}). \notag 
	\end{align}
\end{Lemma}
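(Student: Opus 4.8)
The overall strategy is to exploit that $P_N$ and $S(t)$ commute (both being diagonal in the eigenbasis $\{\phi_k\}$), that $\|P_N\|_{\mathcal{L}(H)}\le 1$, and to interpolate between the $L^2\to L^\infty$ (or $L^2\to L^p$) smoothing estimate \eqref{Est:semigroup 1} for the analytic semigroup and the elementary $L^1\to L^\infty$ heat-kernel bound. I would prove the five inequalities one by one, in roughly the order displayed, since the later ones reuse the earlier ones.

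\emph{Step 1 (the $L^\infty$ estimate \eqref{Est:PN S}).} Using the Sobolev embedding $\dot H^{\frac12+\epsilon}\hookrightarrow L^\infty$ in one space dimension for $\epsilon>0$ small, write $\|P_N S(t)\psi\|_{L^\infty}\le C\|A^{\frac14+\frac\epsilon2}P_NS(t)\psi\|=C\|A^{\frac14+\frac\epsilon2-\frac\gamma2}P_NS(t)A^{\frac\gamma2}\psi\|\le C\|A^{\frac14+\frac\epsilon2-\frac\gamma2}S(t)\|_{\mathcal{L}(H)}\|\psi\|_{\dot H^\gamma}$, and then apply \eqref{Est:semigroup 1} with exponent $\frac14+\frac\epsilon2-\frac\gamma2$ (nonnegative when $\gamma<\frac12$, which is the hypothesis), giving $Ct^{-\frac14-\frac\epsilon2+\frac\gamma2}\|\psi\|_{\dot H^\gamma}$; since $\epsilon>0$ is arbitrary this yields the stated exponent $\frac{2\gamma-1}{4}$ up to renaming. (One must track that the embedding constant and \eqref{Est:semigroup 1} are uniform in $N$ because $P_N$ is a contraction; this is where care is needed but no real obstacle arises.)

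\emph{Step 2 (the remaining four estimates).} For $\|P_NS(t)\psi\|_{L^p}\le t^{-\frac{p-2}{4p}}\|\psi\|$ I would interpolate the trivial $L^2\to L^2$ bound $\|P_NS(t)\psi\|\le\|\psi\|$ with the $L^2\to L^\infty$ bound of Step~1 at $\gamma=0$ (which gives exponent $t^{-1/4}$); the Riesz--Thorin/$L^p$-interpolation exponent $\frac1p=\frac{\vartheta}{\infty}+\frac{1-\vartheta}{2}$ gives $\vartheta=1-\frac2p$ and hence $t^{-\frac14(1-\frac2p)}=t^{-\frac{p-2}{4p}}$. The bound $\|P_NS(t)\psi\|_{L^\infty}\le(t/2)^{-1/2}\|\psi\|_{L^1}$ is the $L^1\to L^\infty$ smoothing of the Dirichlet heat semigroup: split $S(t)=S(t/2)S(t/2)$, use $S(t/2):L^1\to L^2$ with norm $\le C(t/2)^{-1/4}$ (dual to $L^2\to L^\infty$) and $S(t/2):L^2\to L^\infty$ with norm $\le C(t/2)^{-1/4}$, and verify the constant is exactly $1$ either by the explicit kernel comparison with the whole-line Gaussian or by a direct eigenfunction computation $\sum_k e^{-t\lambda_k}\|\phi_k\|_{L^\infty}\|\phi_k\|_{L^\infty}\le\cdots$; the factor $(t/2)$ just records the splitting. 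The $L^1\to L^4$ bound follows the same splitting/interpolation with the exponent $\frac14=\frac{\vartheta}{\infty}+\frac{1-\vartheta}{1}$, $\vartheta=\frac34$, combining the $L^1\to L^1$ contraction and the $L^1\to L^\infty$ bound, giving the power $(t/2)^{-\frac12\cdot\frac34}=(t/2)^{-3/8}$. Finally, for $\|P_NS(t)B(\psi)\|_{L^4}\le Ct^{-5/8}\|\psi\|_{L^4}^2$, write $S(t)B(\psi)=S(t/2)\,S(t/2)B(\psi)$: the inner factor is estimated by \eqref{Est:S(t)B} with $\varsigma=0$, $\|S(t/2)B(\psi)\|\le C(t/2)^{-1/2}\|\psi\|_{L^4}^2$, and the outer factor $S(t/2):H\to L^4$ contributes $(t/2)^{-\frac{4-2}{4\cdot4}}=(t/2)^{-1/8}$ by the already-proved $L^2\to L^p$ estimate at $p=4$; multiplying the two powers gives $t^{-1/2-1/8}=t^{-5/8}$.

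\emph{Main obstacle.} The only genuinely delicate point is pinning down the \emph{sharp constant} $1$ (rather than a generic $C$) in the $L^1\to L^\infty$ and $L^1\to L^4$ inequalities, since the statement writes $(t/2)^{-1/2}$ and $(t/2)^{-3/8}$ with coefficient one. This requires either the pointwise domination of the Dirichlet kernel on $(0,1)$ by the free Gaussian kernel on $\mathbb R$ (a reflection/monotonicity argument) together with the exact Gaussian normalisation, or a careful bookkeeping of the two $L^1\to L^2$ and $L^2\to L^\infty$ constants so that their product is $\le(t/2)^{-1/2}$; everything else is routine semigroup smoothing plus interpolation, uniform in $N$ because $\|P_N\|_{\mathcal L(L^q)}\le 1$ for all $q$ is inherited from the fact that $P_N$ is an orthogonal projection onto a space of uniformly bounded functions — or, more carefully, one uses $\|P_NS(t)\psi\|_{L^q}\le\|S(t)\psi\|_{L^q}$ only for $q=2$ and routes all other $L^q$ bounds through $L^2$ as above, thereby sidestepping any claim about $P_N$ on $L^q$, $q\neq2$.
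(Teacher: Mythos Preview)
Your proposal is correct and largely matches the paper. The paper in fact cites \cite{Wang2020} for the first three assertions and only proves the last two; your sketches for those three are the standard arguments. For the fourth estimate your splitting $P_NS(t)B(\psi)=P_NS(t/2)\,S(t/2)B(\psi)$ followed by \eqref{Est:S(t)B} at $\varsigma=0$ and the already-established $L^2\to L^4$ bound is exactly the paper's proof.

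The one genuine difference is the fifth estimate. Your primary route---Riesz--Thorin between $L^1\to L^1$ and $L^1\to L^\infty$---is unsafe precisely for the reason you flag at the end: $P_N$ is an $L^2$-orthogonal projection and is \emph{not} uniformly bounded on $L^1$ (partial sine sums diverge in $L^1$), so $P_NS(t)$ is not an $L^1$-contraction and the interpolation would lose uniformity in $N$. The paper takes exactly your ``route through $L^2$'' alternative: apply the second assertion to get $\|P_NS(t)\psi\|_{L^4}\le (t/2)^{-1/8}\|P_NS(t/2)\psi\|$, then bound the $H$-norm by duality against \eqref{Est:PN S} at $\gamma=0$, namely $\|P_NS(t/2)\psi\|=\sup_{\|\phi\|\le1}|\langle\psi,P_NS(t/2)\phi\rangle|\le\|\psi\|_{L^1}\cdot C(t/2)^{-1/4}$. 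So commit to that routing and drop the $L^1\to L^1$ interpolation. Regarding your worry about the sharp constants in assertions three and five: the paper's own proof of the fifth assertion actually concludes with $\le C\,t^{-3/8}\|\psi\|_{L^1}$ rather than the constant-free $(t/2)^{-3/8}$ displayed, and every downstream application of Lemma~\ref{Le:PN S} carries a generic $C$, so no precise constant is needed.
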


\begin{proof}
	The first three assertions can be found in \cite[Lemmas 3.2, 4.1]{Wang2020}, so we only prove the last two assertions. 
	By the second assertion with $p=4$ and \eqref{Est:S(t)B} with $\varsigma = 0$, we have 
	\begin{align*}
		\| P_N S(t) B(\psi) \|_{L^4} 
		&= \left\| P_N S(\tfrac{t}{2}) S(\tfrac{t}{2}) B(\psi) \right\|_{L^4} 
		\leq C t^{-\frac{1}{8}} \left\| S(\tfrac{t}{2}) B(\psi) \right\| 
		\leq C t^{-\frac{5}{8}} \| \psi \|_{L^4}^2. 
	\end{align*}
	Through the second assertion with $p=4$ and \eqref{Est:PN S} with $\gamma=0$, we get
	\begin{align*}
		\| P_N S(t) \psi \|_{L^4} &\leq \left( \tfrac{t}{2} \right)^{-\frac{1}{8}} \| P_N S(\tfrac{t}{2}) \psi \| 
		= \left( \tfrac{t}{2} \right)^{-\frac{1}{8}} \sup\limits_{\|\phi\|\leq1} |\langle P_N S(\tfrac{t}{2}) \psi , \phi \rangle| \\
		&= \left( \tfrac{t}{2} \right)^{-\frac{1}{8}} \sup\limits_{\|\phi\|\leq1} |\langle \psi , P_N S(\tfrac{t}{2}) \phi \rangle| 
		\leq \left( \tfrac{t}{2} \right)^{-\frac{1}{8}} \sup\limits_{\|\phi\|\leq1} \|\psi\|_{L^1} 
		\|P_N S(\tfrac{t}{2}) \phi \|_{L^\infty} \\
		& \leq C \left( \tfrac{t}{2} \right)^{-\frac{1}{8}} t^{-\frac{1}{4}} \|\psi\|_{L^1} \sup\limits_{\|\phi\|\leq1} \|\phi\| 
		\leq C t^{-\frac{3}{8}} \|\psi\|_{L^1}, 
	\end{align*}
	which completes the proof.
\end{proof}

\begin{Lemma}\label{Le:PN u}
	For $u$ given by \eqref{mild solution}, $\gamma \in [0,1/2)$ and $p \geq 2$, it holds that 
	\begin{equation*}
		\sup\limits_{N \in \mathbb{N}} \|P_N u(t) \|_{L^p(\Omega; L^\infty)} 
		\leq C t^{\frac{2\gamma-1}{4}} \| u_0 \|_{L^p(\Omega; \dot{H}^{\gamma})} + C \left( \| u_0 \|_{L^{3p}(\Omega; L^\infty)}^3 + 1  \right), \quad \forall \, t \in (0, T]. 
	\end{equation*}
\end{Lemma}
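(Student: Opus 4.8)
The plan is to split $P_N u(t)$ according to the mild solution representation \eqref{mild solution},
\begin{equation*}
	P_N u(t) = P_N S(t) u_0
	+ \int_{0}^{t} P_N S(t-s) B(u(s))\, ds
	+ \int_{0}^{t} P_N S(t-s) f(u(s))\, ds
	+ P_N \mathcal{O}_t,
\end{equation*}
and to estimate the four terms in $L^p(\Omega; L^\infty)$ separately, using the smoothing properties gathered in Lemma \ref{Le:PN S} together with the moment bounds of Theorem \ref{Th:moment bound}. Since $S(t)$ acts diagonally in the eigenbasis $\{\phi_k\}$ it commutes with $P_N$ and $P_N S(t) = P_N S(t) P_N$; in particular $P_N \mathcal{O}_t = \int_0^t P_N S(t-s) P_N\, dW(s) = \mathcal{O}_t^N$, so the last term is controlled uniformly in $t$ and $N$ by \eqref{PN O}. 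For the linear term, \eqref{Est:PN S} gives $\|P_N S(t) u_0\|_{L^\infty} \leq C t^{\frac{2\gamma-1}{4}} \|u_0\|_{\dot{H}^\gamma}$, and taking $L^p(\Omega)$-norms produces exactly the first term on the right-hand side.

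I expect the Burgers-type term to be the main obstacle: the natural $L^4$-smoothing estimate $\|P_N S(t) B(\psi)\|_{L^4} \leq C t^{-5/8} \|\psi\|_{L^4}^2$ from Lemma \ref{Le:PN S} does not by itself reach $L^\infty$. The remedy is to use the semigroup property to write $P_N S(t-s) B(u(s)) = P_N S(\tfrac{t-s}{2})\big( S(\tfrac{t-s}{2}) B(u(s)) \big)$, apply \eqref{Est:PN S} with $\gamma = 0$ to the first factor and \eqref{Est:S(t)B} with $\varsigma = 0$ to the second, to obtain
\begin{equation*}
	\| P_N S(t-s) B(u(s)) \|_{L^\infty}
	\leq C (t-s)^{-\frac14} \big\| S(\tfrac{t-s}{2}) B(u(s)) \big\|
	\leq C (t-s)^{-\frac34} \| u(s) \|_{L^4}^2 .
\end{equation*}
Since $-\tfrac34 > -1$ the time singularity is integrable, so Minkowski's integral inequality, $\|u(s)\|_{L^4} \leq \|u(s)\|_{L^\infty}$ (recall $|\mathcal{U}|=1$) and Theorem \ref{Th:moment bound} yield a bound by $C \sup_{s \in [0,T]} \|u(s)\|_{L^{2p}(\Omega; L^4)}^2 \leq C\big(\|u_0\|_{L^{3p}(\Omega; L^\infty)}^3 + 1\big)$, where in the last step I use $\|X\|_{L^{2p}(\Omega)} \leq \|X\|_{L^{3p}(\Omega)}$ and $a^2 \leq a^3 + 1$.

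For the reaction term, \eqref{Est:PN S} with $\gamma = 0$ gives $\|P_N S(t-s) f(u(s))\|_{L^\infty} \leq C (t-s)^{-1/4} \|f(u(s))\|$, and then the growth bound \eqref{nonlinearity} together with Theorem \ref{Th:moment bound} (using $\|u(s)\|_{L^6} \leq \|u(s)\|_{L^\infty}$) bounds $\int_0^t (t-s)^{-1/4} \|f(u(s))\|\, ds$ in $L^p(\Omega)$ by $C\big(\|u_0\|_{L^{3p}(\Omega; L^\infty)}^3 + 1\big)$, again because $(t-s)^{-1/4}$ is locally integrable. Summing the four contributions and taking the supremum over $N \in \mathbb{N}$ gives the claim. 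The only genuinely delicate point is the exponent bookkeeping in the Burgers term: distributing the regularity loss as $(t-s)^{-1/4}$ from the $\dot{H}^0 \to L^\infty$ smoothing of $P_N S(\cdot)$ and $(t-s)^{-1/2}$ from $S(\cdot) B(\cdot)$ must still leave a locally integrable singularity, which it does.
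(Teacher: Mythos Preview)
Your proof is correct and follows essentially the same route as the paper: both split $P_N u(t)$ via the mild formulation and handle the four terms using \eqref{Est:PN S} for the initial datum and the reaction term, the semigroup splitting combined with \eqref{Est:S(t)B} (with $\varsigma=0$) to obtain the integrable $(t-s)^{-3/4}$ singularity for the Burgers term, and \eqref{PN O} for the stochastic convolution. Your exposition of the exponent bookkeeping for the Burgers term and the passage from $\|u_0\|_{L^{2p}(\Omega;L^4)}^2$ to $\|u_0\|_{L^{3p}(\Omega;L^\infty)}^3$ is just a more explicit version of what the paper does in one line.
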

\begin{proof}
	By \eqref{Est:PN S}, the estimate \eqref{Est:S(t)B} with $\varsigma = 0$, Theorem \ref{Th:moment bound} and \eqref{PN O}, we have 
	\begin{align*}
		\left\| P_N u(t) \right\|_{L^p(\Omega; L^\infty)} 
		&\leq C t^{\frac{2\gamma-1}{4}} \| u_0 \|_{L^p(\Omega; \dot{H}^{\gamma})} 
		+ C \int_{0}^{t} (t-s)^{-\frac{3}{4}} \left\|u(s)\right\|_{L^{2p}(\Omega; L^4)}^2 ds \\
		&\quad + C \int_{0}^{t} (t-s)^{-\frac{1}{4}} \left( \left\| u(s) \right\|_{L^{3p}(\Omega; L^6)}^3 + 1 \right) ds
		+ \left\| P_N \mathcal{O}_t \right\|_{L^p(\Omega; L^\infty)} \\
		& \leq C t^{\frac{2\gamma-1}{4}} \| u_0 \|_{L^p(\Omega ;  \dot{H}^{\gamma})} + C \left( \| u_0 \|_{L^{3p}(\Omega; L^\infty)}^3 + 1  \right), 
	\end{align*}
	which completes the proof. 
\end{proof}

The next theorem is about the error estimate of $u^N$. 
\begin{Theorem}\label{Th:spatial rate}
	Under Assumptions \ref{Asp:nu}, \ref{Asp:Initial value}, there exists a positive constant $C$ independent of $N$, such that for $p \geq 2$ and $\alpha \in [0, 1/2)$, 
	\begin{equation*}
		\sup\limits_{t \in [0,T]}\| u(t) - u^{N}(t) \|_{L^{p}(\Omega ; H)} \leq
		C \lambda_{N+1}^{-\frac{\alpha}{2}} \left( \|u_0\|_{L^{12p}(\Omega; L^\infty)}^{12} +  \|u_0\|_{L^{4p}(\Omega; \dot{H}^{\max\{ \alpha , (p-1)/2p \}})}^{4} +  1 \right), 
	\end{equation*}
	where $u$ and $u^N$ are given by \eqref{mild solution} and \eqref{mild solution N}, respectively. 
\end{Theorem}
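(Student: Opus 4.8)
The plan is to estimate $u(t)-u^N(t)$ by splitting it into four pieces according to the mild-solution representations \eqref{mild solution} and \eqref{mild solution N}: the initial-data term $S(t)u_0-S_N(t)P_Nu_0$, the Burgers-drift term, the cubic-drift term, and the stochastic-convolution term $\mathcal{O}_t-\mathcal{O}_t^N$. It is convenient to further decompose $u^N$'s building blocks through the natural comparison quantity $P_Nu(t)$, writing $u(t)-u^N(t)=(u(t)-P_Nu(t))+(P_Nu(t)-u^N(t))$. The first summand is controlled directly by \eqref{Est:projection} together with the Sobolev regularity \eqref{Est:Sobolev regularity}: $\|(I-P_N)u(t)\|\le\lambda_{N+1}^{-\alpha/2}\|u(t)\|_{\dot H^\alpha}$, which already has the right order in $N$. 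So the real work is to bound $e^N(t):=P_Nu(t)-u^N(t)$.

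For $e^N(t)$ I would not try to differentiate; instead I would write $P_Nu(t)$ via its own variation-of-constants formula (apply $P_N$ to \eqref{mild solution}, using $P_NS(t)=S_N(t)P_N$ and $P_N\mathcal{O}_t=\mathcal{O}_t^N$), subtract \eqref{mild solution N}, and get $e^N(t)=\int_0^t S_N(t-s)P_N\big(B(u(s))-B(u^N(s))+f(u(s))-f(u^N(s))\big)\,ds$. Then split each nonlinearity into a "consistency" part involving $u$ vs. $P_Nu$ and a "stability" part involving $P_Nu$ vs. $u^N$. The consistency parts are estimated using the smoothing bounds: for the Burgers term use \eqref{Est:S(t)B}/\eqref{Est:P_N B}-type estimates and the bilinearity $B(a)-B(b)=\tfrac12\mathcal{D}(a-b)(a+b)$ so that $\|S_N(t-s)P_N(B(u)-B(P_Nu))\|\le C(t-s)^{-3/4}\|u-P_Nu\|_{L^4}\|u+P_Nu\|_{L^4}$, and then $\|u-P_Nu\|_{L^4}$ is controlled via an interpolation/Sobolev estimate by $\lambda_{N+1}^{-\alpha/2}$ times a $\dot H^{\alpha}$-type norm (this is where the exponent $\max\{\alpha,(p-1)/2p\}$ in the statement enters, from the $L^4$-vs-$L^2$ Gagliardo–Nirenberg trade-off and Lemma~\ref{Le:PN u}). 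For the cubic term use \eqref{nonlinearity} together with the $L^\infty$-regularity \eqref{Est:L infty regularity} and Lemma~\ref{Le:PN u} to absorb the $\|u\|_{L^\infty}^2+\|P_Nu\|_{L^\infty}^2+1$ factor into moments, leaving a harmless power of the initial data and the same $\lambda_{N+1}^{-\alpha/2}$ rate (raised, as the statement shows, to powers up to $12$ because cubic terms get squared when Hölder-ing out the coefficient).

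The stability parts are handled by a Gronwall argument. Pair $e^N(t)$ (or rather its differential form) against itself in $H$: the key analytic input is Lemma~\ref{Lem:1}, the generalized one-sided Lipschitz/monotonicity estimate, applied to $P_Nu(s)$ and $u^N(s)$, which are both in $H^N\subset H_0^1(\mathcal U)$, so that $\langle e^N, B(P_Nu)-B(u^N)+f(P_Nu)-f(u^N)\rangle\le\tfrac12\|e^N\|_{\dot H^1}^2+C_{\nu,\theta}\|e^N\|^2$; the bad $\dot H^1$ term is exactly cancelled by the dissipation $-\|e^N\|_{\dot H^1}^2$ produced by $-A_N$. This yields, after also incorporating the consistency contributions as forcing and then raising to the $p$-th moment via a stochastic/deterministic Gronwall lemma, the bound $\sup_{t}\|e^N(t)\|_{L^p(\Omega;H)}\le C\lambda_{N+1}^{-\alpha/2}(\cdots)$. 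Combining with the projection-error part gives the claim. The main obstacle I anticipate is the Burgers consistency term near $s=t$: the factor $(t-s)^{-3/4}$ from $\|S_N(t-s)P_NB(\cdot)\|$ is only borderline integrable, and one must be careful to pair the singularity with an $L^4$ (not $\dot H^1$) norm of $u-P_Nu$, which in turn forces the slightly enlarged regularity index $\max\{\alpha,(p-1)/2p\}$ and an interpolation between $\dot H^\alpha$-regularity and the $L^\infty$-bound from Lemma~\ref{Le:PN u}; keeping all these exponents consistent so the final rate is exactly $\lambda_{N+1}^{-\alpha/2}$ is the delicate bookkeeping step.
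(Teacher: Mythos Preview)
Your high-level decomposition $u-u^N=(I-P_N)u+e^N$ and the use of Lemma~\ref{Lem:1} for the ``stability'' part $P_Nu$ vs.\ $u^N$ are exactly right, but the plan you describe for $e^N$ is internally inconsistent: you say you ``would not try to differentiate'' and will bound the consistency contributions via the mild formulation and semigroup smoothing, yet for the stability contributions you propose to ``pair $e^N(t)$ (or rather its differential form) against itself in $H$''. You cannot split the single integral representation of $e^N$ and treat one piece by mild estimates and the other by an energy argument; once you commit to the energy identity $\tfrac1p\tfrac{d}{dt}\|e^N\|^p=\|e^N\|^{p-2}\langle e^N,\tfrac{d}{dt}e^N\rangle$, the consistency terms appear there too, as inner products $\langle e^N,P_N(B(u)-B(P_Nu))\rangle$, not as $\|S_N(t-s)P_N(B(u)-B(P_Nu))\|$. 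Conversely, if you stay in the mild formulation throughout, the stability Burgers term gives $\int_0^t(t-s)^{-1/2}(\|P_Nu\|_{L^\infty}+\|u^N\|_{L^\infty})\|e^N\|\,ds$ and you would need a priori $L^\infty$-bounds on $u^N$, which are \emph{not} available at this point of the paper; Lemma~\ref{Lem:1} is precisely what lets you bypass that.

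The paper commits to the energy method for all four pieces. The Burgers consistency term is handled not by smoothing but by the duality estimate \eqref{Est:P_N B}: $\langle e^N,P_N(B(u)-B(P_Nu))\rangle\le\tfrac12\|e^N\|_{\dot H^1}\|(u+P_Nu)(u-P_Nu)\|$, after which Young's inequality produces $\tfrac14\|e^N\|^{p-2}\|e^N\|_{\dot H^1}^2$ (absorbed by the dissipation $-\|e^N\|^{p-2}\|e^N\|_{\dot H^1}^2$) plus a forcing term $C(\|u\|_{L^\infty}^p+\|P_Nu\|_{L^\infty}^p)\|(I-P_N)u\|^p$. Note that $(I-P_N)u$ is measured in $L^2$, not $L^4$. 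The exponent $\max\{\alpha,(p-1)/(2p)\}$ in the statement therefore does \emph{not} come from an $L^4$--$L^2$ Gagliardo--Nirenberg trade-off as you suggest; it comes from the time-integrability of the coefficient $\|P_Nu(s)\|_{L^\infty}^{2p}$ after taking expectations: Lemma~\ref{Le:PN u} gives $\|P_Nu(s)\|_{L^\infty}\lesssim s^{(2\gamma-1)/4}\|u_0\|_{\dot H^\gamma}+\dots$, and $\int_0^t s^{(2\gamma-1)p/2}\,ds<\infty$ forces $\gamma=(p-1)/(2p)$.
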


\begin{proof}
	Using the triangle inequality, \eqref{Est:projection} and \eqref{Est:Sobolev regularity}, we have 
	\begin{align*}
		&\| u(t) - u^{N}(t) \|_{L^p(\Omega; H)} \\
		\leq \ & \| (I - P_N ) u(t) \|_{L^p(\Omega; H)} 
		+ \| P_N u(t) - u^{N}(t) \|_{L^p(\Omega; H)} \\
		\leq \ & \lambda_{N+1}^{-\frac{\alpha}{2}} \| u(t) \|_{L^p(\Omega; \dot{H}^{\alpha})} 
		+ \| P_N u(t) - u^{N}(t) \|_{L^p(\Omega; H)} \\
		\leq \ & C \lambda_{N+1}^{-\frac{\alpha}{2}} \left( \|u_0\|_{L^{3p}(\Omega; L^\infty)}^3 + \|u_0\|_{L^{p}(\Omega; \dot{H}^{\alpha})} + 1 \right) + \| P_N u(t) - u^{N}(t) \|_{L^p(\Omega; H)}. 
	\end{align*}
	Denote $e^{N}_t := P_N u(t) - u^{N}(t)$, which solves  
	\begin{equation*}
		\frac{d}{dt} e^{N}_t = -A_N e^{N}_t 
		+ P_N B(u(t)) - P_N B(u^{N}(t))
		+ P_N f(u(t)) - P_N f(u^{N}(t)), \qquad e_0^N=0.  
	\end{equation*}
	Using the identity 
	$$\frac{1}{p} \frac{d}{dt} \|e^{N}_t\|^p = \|e^{N}_t\|^{p-2} \left\langle e^{N}_t , \frac{d}{dt} e^{N}_t \right\rangle,$$ 
	we have 
	\begin{align*}
		\frac{1}{p} \frac{d}{dt} \| e^{N}_t \|^p 
		&= \|e^{N}_t\|^{p-2} \langle e^{N}_t , -A e^{N}_t + P_N B(u(t)) - P_N B(u^{N}(t)) + P_N f(u(t)) - P_N f(u^{N}(t)) \rangle\\ 
		&= - \|e^{N}_t\|^{p-2} \|e^{N}_t\|_{\dot{H}^1}^2 
		+ \|e^{N}_t\|^{p-2} \langle e^{N}_t , P_N B(u) - P_N B(P_N u) \rangle + \|e^{N}_t\|^{p-2} \langle e^{N}_t , B(P_N u) - B(u^{N}) \rangle \\
		& \quad + \|e^{N}_t\|^{p-2} \langle e^{N}_t , f(u) - f(P_N u) \rangle + \|e^{N}_t\|^{p-2} \langle e^{N}_t , f(P_N u) - f(u^{N}) \rangle \\
		&=: - \|e^{N}_t\|^{p-2} \|e^{N}_t\|_{\dot{H}^1}^2 + I_1 + I_2 + I_3 + I_4. 
	\end{align*}
	For $I_1$, by \eqref{Est:P_N B}, 
	$$\frac{1}{2}ab \leq \frac{1}{4}a^2 + \frac{1}{4}b^2, \qquad \frac{1}{4} a^{p-2} b^2 \leq \frac{p-2}{4p} a^p + \frac{1}{2p} b^p, \qquad a, b \geq 0, \quad p \geq 2$$
	we obtain
	\begin{align*}
		I_1 
		&\leq \frac{1}{2} \|e^{N}_t\|^{p-2} \| e^{N}_t \|_{\dot{H}^1}  \left\| (u + P_N u) (u - P_N u) \right\|  \\
		&\leq \frac{1}{4} \|e^{N}_t\|^{p-2} \| e^{N}_t \|_{\dot{H}^1}^2 + \frac{1}{4} \|e^{N}_t\|^{p-2} \left\| (u + P_N u) (u - P_N u) \right\|^2  \\
		&\leq \frac{1}{4} \|e^{N}_t\|^{p-2} \| e^{N}_t \|_{\dot{H}^1}^2 + \frac{p-2}{4p} \|e^{N}_t\|^{p} 
		+ \frac{1}{2p} \left\| (u + P_N u) (u - P_N u) \right\|^p  \\
		&\leq \frac{1}{4} \|e^{N}_t\|^{p-2} \| e^{N}_t \|_{\dot{H}^1}^2 
		+ \frac{p-2}{4p} \|e^{N}_t\|^{p} 
		+ C \left( \|u\|_{L^\infty}^p + \|P_N u\|_{L^\infty}^p \right) \|(I-P_N) u\|^p. 
	\end{align*}
	By using $a^{p-1}b \leq (p-1) a^p / p + b^p/p$ and \eqref{nonlinearity}, we get
	\begin{align*}
		I_3 &= \|e^{N}_t\|^{p-2} \langle e^{N}_t , f(u) - f(P_N u) \rangle 
		\leq \|e^{N}_t\|^{p-1} \| f(u) - f(P_N u) \| \\
		& \leq \frac{p-1}{p} \|e^{N}_t\|^{p} + \frac{1}{p} \| f(u) - f(P_N u) \|^p \\
		& \leq \frac{p-1}{p} \|e^{N}_t\|^{p} + C \left( \|u\|_{L^\infty}^{2p} + \|P_Nu\|_{L^\infty}^{2p} + 1 \right) \| u - P_N u \|^p . 
	\end{align*}
	Furthermore, through Lemma \ref{Lem:1}, we have 
	\begin{align*}
		I_2 + I_4 &= \|e^{N}_t\|^{p-2}  \langle P_N u - u^{N} , B(P_N u) - B(u^{N}) + f(P_N u) - f(u^{N}) \rangle \\
		&\leq \frac{1}{2} \|e^{N}_t\|^{p-2}  \| P_N u - u^{N} \|_{\dot{H}^1}^2 + C \|e^{N}_t\|^{p-2}  \| P_N u - u^{N} \|^2 
		= \frac{1}{2} \|e^{N}_t\|^{p-2} \| e^{N}_t \|_{\dot{H}^1}^2 + C \|e^{N}_t\|^{p}. 
	\end{align*}
	Putting the estimates of $I_i, i=1, 2, 3, 4$ together, we obtain 
	\begin{align*}
		\frac{1}{p} \frac{d}{dt} \| e^{N}_t \|^p 
		&\leq - \|e^{N}_t\|^{p-2}  \|e^{N}_t\|_{\dot{H}^1}^2  + \frac{1}{2} \|e^{N}_t\|^{p-2}  \| e^{N}_t \|_{\dot{H}^1}^2 + C \| e^{N}_t \|^p \\
		&\quad+ \frac{1}{4} \|e^{N}_t\|^{p-2} \| e^{N}_t \|_{\dot{H}^1}^2 
		+ \frac{p-2}{4p} \|e^{N}_t\|^{p} 
		+ C \left( \|u(t)\|_{L^\infty}^p + \|P_N u(t)\|_{L^\infty}^p \right) \|(I-P_N) u(t)\|^p \\
		&\quad+ \frac{p-1}{p} \|e^{N}_t\|^{p} + C \left( \|u(t)\|_{L^\infty}^{2p} + \|P_Nu(t)\|_{L^\infty}^{2p} + 1 \right) \|(I-P_N) u(t)\|^p. 
	\end{align*}
	Through H\"older's inequality, \eqref{Est:projection}, \eqref{Est:L infty regularity}, and Lemma \ref{Le:PN u} with $\gamma =(p-1)/2p$, we get 
	\begin{align*}
		\mathbb{E} \| e^{N}_t \|^p &\leq C \int_{0}^{t} \mathbb{E} \| e^{N}_s \|^p ds
		+ C \int_{0}^{t} \left(  \mathbb{E} \left[ \|u(s)\|_{L^\infty}^{4p} + \|P_Nu(s)\|_{L^\infty}^{4p} + 1 \right]  \right)^{\frac{1}{2}} 
		\cdot \lambda_{N+1}^{-\frac{\alpha}{2}p} \left( \mathbb{E} \left[ \|u(s)\|_{\dot{H}^{\alpha}}^{2p} \right] \right)^{\frac{1}{2}} ds \\
		&\leq C \int_{0}^{t} \mathbb{E} \| e^{N}_s \|^p ds
		+ C \lambda_{N+1}^{-\frac{\alpha}{2}p} \sup\limits_{t \in [0,T]} \|u(t)\|_{L^{2p}(\Omega; \dot{H}^{\alpha})}^{p}  \\
		&\quad \times \int_{0}^{t} \left( \|u_0\|_{L^{12p}(\Omega; L^\infty)}^{6p} + s^{\frac{(2\gamma-1)p}{2}} \|u_0\|_{L^{4p}(\Omega; \dot{H}^{\gamma})}^{2p} +  1 \right)
		ds \\
		&\leq C \int_{0}^{t} \mathbb{E} \| e^{N}_s \|^p ds 
		+ C \lambda_{N+1}^{-\frac{\alpha}{2}p} \left( \|u_0\|_{L^{12p}(\Omega; L^\infty)}^{12p} +  \|u_0\|_{L^{4p}(\Omega; \dot{H}^{\max\{ \alpha , (p-1)/2p \}})}^{4p} +  1 \right). 
	\end{align*}
	Finally, the desired result follows from  Gronwall's inequality. 
\end{proof}

\section{Fully discrete tamed exponential integrator scheme and error estimates}\label{Sec:Full discrete}
We use the tamed exponential integrator method for the temporal discretizaiton. To this end, for $M \in \mathbb{N}$, we take a uniform partition on $[0,T]$: $0 = t_0 < t_1 < \cdots < t_{M-1} < t_{M} = T$, where $t_m = m \tau$, $m = 0, 1, 2, ..., M$ and $\tau = T /M$ is the temporal step size. 
Under this uniform partition, we define a fully spatio-temporal discretization, such that $u^N_0 = P_N u_0$ and for $m = 0, 1, 2, ... , M-1$, 
\begin{equation}\label{scheme1}
	u^N_{m+1} = S_N(\tau) u^N_{m} 
		+ \frac{A_N^{-1} (I-S_N(\tau)) B_N(u^N_{m})}{1 + \tau \|(u^N_{m})^2\| }
		+ \frac{A_N^{-1} (I-S_N(\tau)) f_N(u^N_{m})}{1 + \tau \| f_N(u^N_{m}) \| }
		+ \int_{t_m}^{t_{m+1}} S_N(t_{m+1}-s) dW(s),
\end{equation}
where $A_N = P_N A$ and $A_N^{-1} \phi_k = \lambda_{k}^{-1} \phi_k$, $k = 1, 2, ..., N$, $B_N = P_N B$ is defined by \eqref{P_N B} and $f_N=P_N f$. 
Equivalently, 
\begin{equation}\label{scheme2}
		\begin{aligned}
			u^N_{m+1} &= S_N(\tau) u^N_{m} 
			+ \int_{t_m}^{t_{m+1}} \frac{S_N(t_{m+1} - s) B_N(u^N_{m})}{1 + \tau \|(u^N_{m})^2\| } ds\\
			&\quad + \int_{t_m}^{t_{m+1}} \frac{S_N(t_{m+1} - s) f_N(u^N_{m})}{1 + \tau \| f_N(u^N_{m}) \| } ds +  \int_{t_m}^{t_{m+1}} S_N(t_{m+1}-s) dW(s). 
		\end{aligned}
\end{equation}
The tamed approach for the SBHE, distinct from the truncated method used for SBEs as demonstrated in previous works \cite{Hutzenthaler2019,Hutzenthaler2022,Jentzen2019}, enables us to establish the boundedness of moments of the numerical approximation and obtain the convergence rate of the proposed scheme. This will be the focus of our discussion in the remainder of this section. 

\subsection{Estimate of a perturbed differential equation}
The essential step to obtain the temporal convergence rate is establishing the boundedness of moments of the full-discrete solution. 
For this purpose, similar to \cite{Wang2020}, we introduce the so-called perturbed differential equation for $w: [0,T] \rightarrow H^N$ given $z \in L^p([0,T]; L^q(\mathcal{U}))$ as follows. 
\begin{equation*}
	\frac{dw}{dt} = -A_N w + B_N(w + z) + f_N(w + z), \qquad  w(0) = 0. 
\end{equation*}
The mild solution of the above equation is given by
\begin{equation}\label{Eq:perturbed}
	w(t) = \int_{0}^{t} S_N(t-s) B_N(w(s)+z(s)) ds + \int_{0}^{t} S_N(t-s) f_N(w(s)+z(s)) ds. 
\end{equation}
For $q \geq 2$ and $t \in [0,T]$, introduce the norm $$\| z \|_{\mathbb{L}^q(\mathcal{U}\times [0,t])} := \left( \int_{0}^{t} \|z(s)\|_{L^q(\mathcal{U})}^q ds \right)^{\frac{1}{q}}$$ 
(sometimes we write $\|z\|_{\mathbb{L}^q}$ for short). 
The following lemma shows that the $L^\infty$-norm of $w(t)$ can be controlled by the $\mathbb{L}^{18}(\mathcal{U}\times [0,t])$-norm of $z(t)$, which is essential to derive the boundedness of moments of the full-discrete solution. 
\begin{Lemma}
	Under Assumption \ref{Asp:nu}, for $z \in \mathbb{L}^{18}(\mathcal{U}\times [0,t])$ and $w \in H^N$ given by \eqref{Eq:perturbed}, there exists a positive constant $C$ independent of $N$ such that 
	\begin{equation}\label{Est:w}
		\| w(t) \|_{L^\infty} \leq C \left(1 + \| z \|_{\mathbb{L}^{18}(\mathcal{U}\times [0,t])}^{18} \right), \qquad  \forall \, t \in [0,T]. 
	\end{equation}
\end{Lemma}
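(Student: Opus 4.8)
The plan is to bootstrap the $L^\infty$-bound on $w(t)$ through an iteration on a scale of $L^p$-spaces with increasing integrability exponents, exploiting the smoothing of $S_N(t)$ recorded in Lemma~\ref{Le:PN S} together with the dissipativity of the cubic nonlinearity $f_N$. The crucial structural input is that the cubic term $f_N(w+z)$ is sign-definite to leading order: writing $y=w+z$ and testing the perturbed equation against $|w|^{p-2}w$, the term $\langle |w|^{p-2}w, f_N(y)\rangle$ contributes a strongly negative $-\nu\theta$-type contribution in the highest power of $w$ (after splitting $y=w+z$ and expanding), so that the genuinely dangerous pieces all carry at least one factor of $z$. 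These can be absorbed by Young's inequality at the cost of powers of $\|z\|_{L^q}$. The Burgers term is handled exactly as in Lemma~\ref{Lem:1}: testing $B_N(y)$ against $|w|^{p-2}w$ and integrating by parts produces $\tfrac12\|\cdot\|_{\dot H^1}$-type terms plus lower-order quantities controlled by the cubic dissipativity (this is where $\nu>\tfrac16$ enters).

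\medskip

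Concretely, I would first run a differential inequality for $\tfrac1p\tfrac{d}{dt}\|w(t)\|_{L^p}^p$ for a fixed large even $p$: integration by parts on the Burgers term, the monotonicity estimate of Lemma~\ref{Lem:1} applied to the $w$-$w$ part of $B_N(y)+f_N(y)$, and Young's inequality on the mixed $z$-terms yield
\begin{equation*}
	\frac{d}{dt}\|w(t)\|_{L^p}^p \leq C\,\|w(t)\|_{L^p}^p + C\bigl(1+\|z(t)\|_{L^{q}}^{q}\bigr)
\end{equation*}
for a suitable $q=q(p)$; an application of Gronwall then gives $\sup_{t\le T}\|w(t)\|_{L^p}\le C(1+\|z\|_{\mathbb L^{q}})$ on any interval, with constants independent of $N$. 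Since the perturbed equation is essentially the $L^p$-version of the semi-discretization and the cubic term only produces good signs, this step needs no smoothing of the semigroup and is where the bulk of the bookkeeping lies; the exponent $18$ in the statement is dictated by how many times one must iterate and the degree ($3$) of $f$.

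\medskip

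The second stage is the $L^p\to L^\infty$ boost via the mild formulation \eqref{Eq:perturbed}. Using the last two smoothing estimates of Lemma~\ref{Le:PN S}, namely $\|P_NS(t)\psi\|_{L^\infty}\le Ct^{-1/2}\|\psi\|_{L^1}$ and $\|P_NS(t)B(\psi)\|_{L^4}\le Ct^{-5/8}\|\psi\|_{L^4}^2$, together with $\|P_NS(t)\psi\|_{L^p}\le Ct^{-(p-2)/4p}\|\psi\|$ and \eqref{Est:PN S}, one estimates
\begin{equation*}
	\|w(t)\|_{L^\infty} \leq \int_0^t \|P_N S(t-s) B_N(w(s)+z(s))\|_{L^\infty}\,ds + \int_0^t \|P_N S(t-s) f_N(w(s)+z(s))\|_{L^\infty}\,ds .
\end{equation*}
For the Burgers convolution I would write $B_N(y)=\tfrac12 P_N\mathcal D y^2$ and move one derivative onto the semigroup, picking up $(t-s)^{-1}$ which is not integrable — so instead one uses the $L^1\!\to\!L^\infty$ estimate applied to $y^2$, giving $(t-s)^{-1/2}\|y(s)\|_{L^2}^2$, integrable in $s$; this converts the previously obtained $L^2$ (hence $L^p$) control of $w$ plus the $\mathbb L^4$-control of $z$ into an $L^\infty$-bound. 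For the cubic convolution, $f_N(y)$ is a cubic polynomial in $y$, so $\|f_N(y(s))\|_{L^1}\le C(1+\|y(s)\|_{L^3}^3)$ and the $L^1\!\to\!L^\infty$ smoothing gives $(t-s)^{-1/2}(1+\|w(s)\|_{L^3}^3+\|z(s)\|_{L^3}^3)$, again integrable; the $\|w(s)\|_{L^3}^3$ term is controlled by the first-stage bound with $p=$ a suitable value, and $\int_0^t\|z(s)\|_{L^3}^3\,ds$ is dominated by $\|z\|_{\mathbb L^{18}}^3$ up to constants (Hölder in time). Tracking the exponents through both stages produces the power $18$ on $\|z\|_{\mathbb L^{18}}$ in \eqref{Est:w}.

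\medskip

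I expect the main obstacle to be the careful exponent accounting that pins down precisely the $\mathbb L^{18}$ norm — one must choose the intermediate integrability index in the first-stage differential inequality (and possibly iterate the $L^p$ estimate a second time to feed a high enough power of $w$ into the cubic convolution bound) so that every time-singularity $(t-s)^{-\kappa}$ arising from the semigroup estimates has $\kappa<1$ and every $z$-term reassembles, after Hölder in time, into $\|z\|_{\mathbb L^{18}}^{18}$ without producing a larger exponent. The analytic ingredients — dissipativity of $f_N$, the Burgers integration-by-parts from Lemma~\ref{Lem:1}, and the smoothing estimates of Lemma~\ref{Le:PN S} — are all in hand; the work is in the combinatorics of exponents and in verifying $N$-independence, which is automatic since $\|P_NS(t)\|\le$ the corresponding estimate for $S(t)$ uniformly in $N$.
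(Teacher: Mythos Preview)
Your first stage has a genuine obstruction. Testing the Galerkin equation against $|w|^{p-2}w$ with $p>2$ runs into the fact that this test function does not lie in $H^N$, even though $w$ does. Hence $\langle f_N(y),|w|^{p-2}w\rangle=\langle f(y),P_N(|w|^{p-2}w)\rangle$, and the sign-definite contribution $-\nu\int y^3\,|w|^{p-2}w\,dx$ that would yield $-\nu\|w\|_{L^{p+2}}^{p+2}$ simply does not survive the projection: the commutator $\langle f(y),(I-P_N)(|w|^{p-2}w)\rangle$ is of the same order and carries no usable sign, uniformly in $N$. The Burgers integration by parts is obstructed in the same way, and Lemma~\ref{Lem:1} is an $H$-pairing statement with no $L^p$ analogue compatible with $P_N$. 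This is precisely why the paper confines the energy argument to $p=2$ (where the test function $w$ lies in $H^N$), extracting both $\|w(t)\|\le C(1+\|z\|_{\mathbb L^4}^2)$ and the space--time bound $\|w\|_{\mathbb L^4(\mathcal U\times[0,t])}\le C(1+\|z\|_{\mathbb L^4})$, and then bootstraps to a pointwise $L^4$ bound and finally to $L^\infty$ entirely through the mild formulation and the smoothing estimates of Lemma~\ref{Le:PN S}.

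Your second-stage treatment of the Burgers piece is also incorrect: $B_N(y)=\tfrac12 P_N\mathcal D y^2$ carries a derivative, so one cannot simply apply the $L^1\!\to\!L^\infty$ estimate to $y^2$; absorbing $\mathcal D$ into the semigroup costs an additional $(t-s)^{-1/2}$, and the right bound is $\|P_NS(t-s)B(y)\|_{L^\infty}\le C(t-s)^{-3/4}\|y(s)\|_{L^4}^2$. The $L^4$ norm appearing here is exactly what forces an intermediate $L^4$ bootstrap (the paper's Parts~2--3), carried out via the mild formulation and the Gagliardo--Nirenberg inequality $\|w\|_{L^3}^3\le\|w\|\,\|w\|_{L^4}^2$; the resulting product $\|w\|\cdot\|w\|_{L^4}^2$ in the final step is what ultimately produces the exponent $18$.
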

\begin{proof}
	Since $w(0) = 0$, \eqref{Est:w} is obviously true when $t=0$. In the following, we split the proof of \eqref{Est:w} for $t>0$ into four parts. 
	
	\textit{Part 1.} In this part, we prove that 
	\begin{equation}\label{Est:w L2}
		\| w(t) \| \leq C \left( \| z \|^2_{\mathbb{L}^4 (\mathcal{U}\times [0,t])} + 1 \right), \qquad  \forall \, t \in [0,T], 
	\end{equation}
	where the constant $C$ depends on $T$, $\theta$ and $\nu$ but is independent of $N$.
	By using the identity 
	$$\frac{1}{2} \frac{d}{dt} \|w\|^2 = \langle w,\frac{d}{dt}w \rangle, $$ 
	\eqref{Est:P_N B} and $ab/2 \leq a^2 + b^2/16$, we have 
	\begin{equation}\label{Est:perturbed 1}
		\begin{aligned}
			\frac{1}{2} \frac{d}{dt} \| w \|^2 &= \langle w , -Aw + 	P_N B(w+z) + P_N f(w+z) \rangle \\
			&\leq -\|w\|_{\dot{H}^1}^2 + \|w\|_{\dot{H}^1}^2 
			+ \frac{1}{16} \| (w+z)^2 \|^2
			+ \langle w , f(w+z) \rangle. 
		\end{aligned}
	\end{equation}
	Since for $\nu > \frac{1}{6}$, 
	$$\frac{3}{8}-3\nu < 0, \qquad 2\nu-\frac{1}{4} > \left|\frac{1}{4}-\nu\right|,$$ 
	a straightforward calculation gives 
	\begin{equation}\label{Est:perturbed 2}
		\begin{aligned}
			&\tfrac{1}{16} \| (w+z)^2 \|^2
			+ \langle w , f(w+z) \rangle \\
			=\ & \int_{0}^{1} \left( \left( \tfrac{1}{16}-\nu \right) 	w^4 + \left( \tfrac{1}{4}-3\nu \right) w^3 z + \left( \tfrac{3}{8}-3\nu \right) w^2 z^2 + \left( \tfrac{1}{4}-\nu \right)w z^3 + \tfrac{1}{16} z^4 \right) dx \\
			& + \int_{0}^{1} \left( \nu(1+\theta)w^3 + 	2\nu(1+\theta)w^2 z + \nu(1+\theta)w z^2 - \nu \theta w^2 - \nu \theta w z \right) dx \\
			\leq \ & \left( \tfrac{1}{16}-\nu \right) \|w\|_{L^4}^4 + 	\left( 3\nu-\tfrac{1}{4} \right) \|w\|_{L^4}^3 \|z\|_{L^4}
			+ \left( 2\nu-\tfrac{1}{4} \right) \|w\|_{L^4} \|z\|_{L^4}^3 + \tfrac{1}{16} \|z\|_{L^4}^4 \\
			& +  \nu(1+\theta) \left( \|w\|_{L^4}^3 + 2 \|w\|_{L^4}^2 \|z\|_{L^4} + \|w\|_{L^4} \|z\|_{L^4}^2 \right)
			+ \nu \theta \left( \|w\|_{L^4}^2 + \|w\|_{L^4} \|z\|_{L^4} \right). 
		\end{aligned}
	\end{equation}
	For any fixed $t>0$, we claim that 
	\begin{equation*}
		\| w \|_{\mathbb{L}^4(\mathcal{U}\times [0,t])} \leq 5 \| z \|_{\mathbb{L}^4(\mathcal{U}\times [0,t])} \quad \text{or} \quad \|w\|_{\mathbb{L}^4(\mathcal{U}\times [0,t])} \leq 10 t^{\frac{1}{4}} \max \big\{ 1+\theta , \sqrt{\theta} \big\}. 
	\end{equation*}
	If this claim is false, namely there exists some $t \in (0,T]$, such that 
	\begin{equation*}
			\|z\|_{\mathbb{L}^4(\mathcal{U}\times [0,t])} < \tfrac{1}{5} \|w\|_{\mathbb{L}^4(\mathcal{U}\times [0,t])}
			\quad \text{and} \quad  \|w\|_{\mathbb{L}^4(\mathcal{U}\times [0,t])} > 10 t^{\frac{1}{4}} (1+\theta)
			\quad \text{and} \quad  \|w\|_{\mathbb{L}^4(\mathcal{U}\times [0,t])} > 100 t^{\frac{1}{2}} \theta, 
		\end{equation*}
		then for $\nu > 1/6$, by H\"older's inequality, \eqref{Est:perturbed 1} and \eqref{Est:perturbed 2} we have 
		\begin{align*}
			0 \leq \tfrac{1}{2} \| w(t) \|^2 
			&\leq \left( \tfrac{1}{16}-\nu \right) \|w\|_{\mathbb{L}^4}^4 
			+ \left( 3\nu-\tfrac{1}{4} \right)  \|w\|_{\mathbb{L}^4}^3  \|z\|_{\mathbb{L}^4}
			+ \left( 2\nu-\tfrac{1}{4} \right) \|w\|_{\mathbb{L}^4} \|z\|_{\mathbb{L}^4}^3 
			+ \tfrac{1}{16} \|z\|_{\mathbb{L}^4}^4 \\
			&\quad + \nu(1+\theta) \|w\|_{\mathbb{L}^4}^3 t^{\frac{1}{4}} 
			+ 2 \nu(1+\theta) \|w\|_{\mathbb{L}^4}^2 \|z\|_{\mathbb{L}^4}  t^{\frac{1}{4}}
			+ \nu(1+\theta) \|w\|_{\mathbb{L}^4}  \|z\|_{\mathbb{L}^4}^2  t^{\frac{1}{4}} \\
			&\quad + \nu\theta \|w\|_{\mathbb{L}^4}^2   
			t^{\frac{1}{2}} 
			+ \nu\theta \|w\|_{\mathbb{L}^4} \|z\|_{\mathbb{L}^4} t^{\frac{1}{2}} \\
			&< \left( \tfrac{1}{16}-\nu \right) \|w\|_{\mathbb{L}^4}^4 + \tfrac{1}{5} \left( 3\nu-\tfrac{1}{4} \right) \|w\|_{\mathbb{L}^4}^4 + \tfrac{1}{125} \left( 2\nu-\tfrac{1}{4} \right) \|w\|_{\mathbb{L}^4}^4 + \tfrac{1}{16} \times \tfrac{1}{625} \|w\|_{\mathbb{L}^4}^4 \\
			&\quad +  \tfrac{\nu}{10} \|w\|_{\mathbb{L}^4}^4 + \tfrac{\nu}{25} \|w\|_{\mathbb{L}^4}^4 + \tfrac{\nu}{250} \|w\|_{\mathbb{L}^4}^4 + \tfrac{\nu}{100} \|w\|_{\mathbb{L}^4}^4 + \tfrac{\nu}{500} \|w\|_{\mathbb{L}^4}^4 \\
			&< \left( -\tfrac{57}{250} \nu + \tfrac{1}{80} \right) \|w\|_{\mathbb{L}^4}^4 < 0, 
	\end{align*}
	which leads to a contradiction. Accordingly, 
	\begin{equation}\label{Est:w LL4}
		\| w \|_{\mathbb{L}^4(\mathcal{U}\times [0,t])} 
		\leq 5 \| z \|_{\mathbb{L}^4(\mathcal{U}\times [0,t])} + 10 T^{\frac{1}{4}} \max \left\{ 1+\theta , \sqrt{\theta} \right\} 
		\leq C_{T,\theta} \left( \| z \|_{\mathbb{L}^4(\mathcal{U}\times [0,t])} + 1 \right), \quad  \forall\, t \in [0,T], 
	\end{equation}
	where the positive constant $C$ is independent of $N$. 
	From \eqref{Est:perturbed 1}--\eqref{Est:w LL4}, we obtain \eqref{Est:w L2}. 
	
	\textit{Part 2.} In this part we establish the bound for $\|w\|_{L^4}^2$ as follows. 
	\begin{equation}\label{Est:w L4^2}
		\| w(t) \|_{L^4}^2 
		\leq C \int_{0}^{t} (t-s)^{-\frac{1}{4}-\epsilon} \|w(s)\|_{L^4}^4 ds 
		+ C \left( \| z \|_{\mathbb{L}^6 (\mathcal{U}\times [0,t])}^8 + 1 \right), \qquad \forall \, t \in [0,T]. 
	\end{equation}
	Through \eqref{Eq:perturbed}, and the fourth, fifth assertions of Lemma \ref{Le:PN S}, we have 
	\begin{equation}\label{Eq:L^4 norm of w}
		\begin{aligned}
			\| w(t) \|_{L^4} 
			&\leq C \int_{0}^{t} 	(t-s)^{-\frac{1}{2}+\frac{\epsilon}{2}} (t-s)^{-\frac{1}{8}-\frac{\epsilon}{2}} \|w(s)\|_{L^4}^2 ds 
			+ C \int_{0}^{t} (t-s)^{-\frac{3}{8}} \|w(s)\|_{L^3}^3 ds	\\
			&\quad + C \int_{0}^{t} (t-s)^{-\frac{5}{8}} 	\|z(s)\|_{L^4}^2 ds 
			+ C \int_{0}^{t} (t-s)^{-\frac{3}{8}} \left( 	\|z(s)\|_{L^3}^3 + 1 \right) ds,  
		\end{aligned}
	\end{equation}
	where $\epsilon>0$ is a small number. 
	By using the inequality $\|w\|_{L^3}^3 \leq \|w\| \|w\|_{L^4}^2$ and H\"older's inequality, we obtain 
	\begin{equation}\label{Est:perturbed 3}
		\begin{aligned}
			\quad \| w(t) \|_{L^4}& \leq C \left( \int_{0}^{t} (t-s)^{-1+\epsilon} ds \right)^{\frac{1}{2}}
			\left( \int_{0}^{t} (t-s)^{-\frac{1}{4}-\epsilon} 	\|w(s)\|_{L^4}^4 ds \right)^{\frac{1}{2}} \\
			&\quad + C \int_{0}^{t} (t-s)^{-\frac{3}{8}} \|w(s)\| 	\|w(s)\|_{L^4}^2 ds 
			+ C \left( \int_{0}^{t} (t-s)^{-\frac{15}{16}} ds \right)^{\frac{2}{3}} \left( \int_{0}^{t} \|z(s)\|_{L^4}^6 ds \right)^{\frac{1}{3}} \\
			&\quad + C \left( \int_{0}^{t} (t-s)^{-\frac{3}{4}} ds \right)^{\frac{1}{2}} \left( \int_{0}^{t} \left(\|z(s)\|_{L^3}^6 + 1 \right) ds \right)^{\frac{1}{2}} \\
			&\leq C \left( \int_{0}^{t} (t-s)^{-\frac{1}{4}-\epsilon}  \|w(s)\|_{L^4}^4 ds \right)^{\frac{1}{2}}
			+ C \int_{0}^{t} (t-s)^{-\frac{3}{8}} \|w(s)\| \|w(s)\|_{L^4}^2 ds 
			+ C \left( \|z\|_{\mathbb{L}^6}^3 + 1 \right). 
		\end{aligned}
	\end{equation}
	From \eqref{Est:w L2}, $\| z \|_{\mathbb{L}^4 (\mathcal{U}\times [0,s])} \leq \| z \|_{\mathbb{L}^4 (\mathcal{U}\times [0,t])}$ for $s \leq t$, H\"older's inequality and \eqref{Est:w LL4}, it follows that 
	\begin{equation}\label{Est:perturbed 4}
		\begin{aligned}
			&\int_{0}^{t} (t-s)^{-\frac{3}{8}} \|w(s)\| 	\|w(s)\|_{L^4}^2 ds \\
			\leq \ & C \left( \|z\|^2_{\mathbb{L}^4 (\mathcal{U}\times 	[0,t])} + 1 \right) \left( \int_{0}^{t} (t-s)^{-\frac{3}{4}} ds \right)^{\frac{1}{2}} 
			\left( \int_{0}^{t} \|w(s)\|_{L^4}^4 ds 	\right)^{\frac{1}{2}} \\
			\leq \ & C \left( \|z\|^2_{\mathbb{L}^4 (\mathcal{U}\times 	[0,t])} + 1 \right) 
			\| w \|_{\mathbb{L}^4 (\mathcal{U}\times [0,t])}^2
			\leq C \left( \|z\|^4_{\mathbb{L}^4 (\mathcal{U}\times 	[0,t])} + 1 \right). 
		\end{aligned}
	\end{equation}
	As a consequence, substituting \eqref{Est:perturbed 4} into \eqref{Est:perturbed 3} yields \eqref{Est:w L4^2}.

	\textit{Part 3.} In this part, we show that 
	\begin{equation}\label{Est:w L4}
		\| w(t) \|_{L^4} \leq C \left( \| z \|_{\mathbb{L}^6 (\mathcal{U}\times [0,t])}^8 + 1 \right), \ \  \forall t \in [0,T]. 
	\end{equation}
	From \eqref{Eq:L^4 norm of w}, the inequality $\|w\|_{L^3}^3 \leq \|w\| \|w\|_{L^4}^2$ and H\"older's inequality, we know that
	\begin{align*}
		\| w(t) \|_{L^4} 
		&\leq C \int_{0}^{t} (t-s)^{-\frac{5}{8}} \|w(s)\|_{L^4}^2 ds 
		+ C \int_{0}^{t} (t-s)^{-\frac{3}{8}} \|w(s)\| \|w(s)\|_{L^4}^2 ds \\
		&\quad + C \int_{0}^{t} (t-s)^{-\frac{5}{8}} \|z(s)\|_{L^4}^2 ds 
		+ C \int_{0}^{t} (t-s)^{-\frac{3}{8}} \left( \|z(s)\|_{L^3}^3 + 1 \right) ds \\
		&\leq C \int_{0}^{t} (t-s)^{-\frac{5}{8}} \|w(s)\|_{L^4}^2 ds 
		+ C \int_{0}^{t} (t-s)^{-\frac{3}{8}} \|w(s)\| \|w(s)\|_{L^4}^2 ds
		+ C \left( \| z \|_{\mathbb{L}^6 (\mathcal{U}\times [0,t])}^3 + 1 \right). 
	\end{align*}
	Substituting \eqref{Est:perturbed 4} into the above estimate, then applying \eqref{Est:w L4^2}, we obtain 
	\begin{align*}
		\| w(t) \|_{L^4} 
		&\leq C \int_{0}^{t} (t-s)^{-\frac{5}{8}} \|w(s)\|_{L^4}^2 ds 
		+ C \left( \| z \|_{\mathbb{L}^4 (\mathcal{U}\times [0,t])}^4 + \| z \|_{\mathbb{L}^6 (\mathcal{U}\times [0,t])}^3 + 1 \right) \\
		&\leq C \int_{0}^{t} (t-s)^{-\frac{5}{8}} \left( \int_{0}^{s} (s-r)^{-\frac{1}{4}-\epsilon} \|w(r)\|_{L^4}^4 dr \right)  ds  \\
		&\quad + C \int_{0}^{t} (t-s)^{-\frac{5}{8}} \left( \| z \|_{\mathbb{L}^6 (\mathcal{U}\times [0,t])}^8 + 1 \right) ds 
		+ C \left( \| z \|_{\mathbb{L}^6 (\mathcal{U}\times [0,t])}^4 + 1 \right). 
	\end{align*}
	By Fubini's theorem, a change of variable $x = (s-r)/(t-r)$, and \eqref{Est:w LL4}, we have  
	\begin{align*}
		&\int_{0}^{t} (t-s)^{-\frac{5}{8}} \left( \int_{0}^{s} (s-r)^{-\frac{1}{4}-\epsilon} \|w(r)\|_{L^4}^4 dr \right) ds \\
		= \ & \int_{0}^{t} \left( \int_{r}^{t} (t-s)^{-\frac{5}{8}} (s-r)^{-\frac{1}{4}-\epsilon} ds \right) \|w(r)\|_{L^4}^4 dr \\
		= \ & \int_{0}^{t} \left( \int_{0}^{1} (1-x)^{-\frac{5}{8}} x^{-\frac{1}{4}-\epsilon} dx \right) (t-r)^{\frac{1}{8}-\epsilon} \|w(r)\|_{L^4}^4 dr \\
		\leq \ & T^{\frac{1}{8}-\epsilon}  \mathbb{B}\left( \tfrac{3}{8} , \tfrac{3}{4}-\epsilon \right)  \|w\|_{\mathbb{L}^4 (\mathcal{U}\times [0,t])}^4
		\leq C \left( \|z\|_{\mathbb{L}^4 (\mathcal{U}\times [0,t])}^4 + 1 \right), 
	\end{align*}
	where $\mathbb{B}(\cdot , \cdot)$ is the Beta function. This verifies \eqref{Est:w L4}. 
	
	\textit{Part 4.} In the last part, we use the results obtained in the first three parts to derive the desired conclusion of the lemma. Using \eqref{Est:PN S} with $\gamma=0$, the third assertion of Lemma \ref{Le:PN S}, \eqref{Est:S(t)B} with $\varsigma=0$, H\"older's inequality and the fact that for $s \leq t$ 
	\begin{equation*}
		\|w(s)\|_{L^4}^2 \leq C \left(\|z\|_{\mathbb{L}^6 (\mathcal{U}\times [0,t]) }^{16} + 1 \right) \ \ \text{and} \ \ 
		\| w(s) \| \leq C \left( \| z \|^2_{\mathbb{L}^4 (\mathcal{U}\times [0,t])} + 1 \right), 
	\end{equation*}
	we have 
	\begin{align*}
		\| w(t) \|_{L^\infty} 
		&\leq C \int_{0}^{t} (t-s)^{-\frac{3}{4}} 
		\left( \|w(s)\|_{L^4}^2  + \|z(s)\|_{L^4}^2 \right) ds \\
		&\quad + C \int_{0}^{t} (t-s)^{-\frac{1}{2}} \left(	1 + \|w(s)\| \|w(s)\|_{L^4}^2  + \|z(s)\|_{L^3}^3 \right) ds \\
		&\leq C \int_{0}^{t} (t-s)^{-\frac{3}{4}} 
		\left(	1 + \|z\|_{\mathbb{L}^6 (\mathcal{U}\times [0,t]) }^{16}  + \|z(s)\|_{L^4}^2 \right) ds \\
		&\quad + C \int_{0}^{t} (t-s)^{-\frac{1}{2}} \left(	1 + \|z\|_{\mathbb{L}^4 (\mathcal{U}\times [0,t]) }^{2} \|z\|_{\mathbb{L}^6 (\mathcal{U}\times [0,t]) }^{16}  + \|z(s)\|_{L^3}^3 \right) ds \\
		&\leq C 
		\left(	1 + \|z\|_{\mathbb{L}^{18} (\mathcal{U}\times [0,t]) }^{18}  \right),  
	\end{align*}
	which completes the proof. 
\end{proof}

\subsection{Boundedness of moments of the full-discrete approximations}
Recall that $\tau = T/M$ is the uniform temporal step size. Define $\kappa(t) := \tau \lfloor t/\tau \rfloor$ for $t \in [0,T]$, where $\lfloor \cdot \rfloor$ is the floor function. 
In other words, 
\begin{align*}
	\kappa(t) = t_i = i \tau, \qquad t \in [t_i , t_{i+1}), \quad i \in \{ 0 , 1 , ... , M-1 \}. 
\end{align*}
With $\kappa(t)$ defined above, we introduce a continuous version of the fully discrete scheme \eqref{scheme2} as follows: 
\begin{equation}\label{continuous version}
	u^{N,\tau}_t = S_N(t) u^N_{0} 
		+ \int_{0}^{t} \frac{S_N(t - s) B_N(u^{N,\tau}_{\kappa(s)})}{1 + \tau \|u^{N,\tau}_{\kappa(s)}\|_{L^4}^2 } ds
		+ \int_{0}^{t} \frac{S_N(t - s) f_N(u^{N,\tau}_{\kappa(s)})}{1 + \tau \| f_N(u^{N,\tau}_{\kappa(s)}) \| } ds 
		+ \mathcal{O}_t^N. 
\end{equation}
It is clear that if $t=t_m$, then $u^{N,\tau}_{t_m}$ produced by \eqref{continuous version} is equal to $u^N_m$ given by \eqref{scheme2}. 
We additionally introduce a sequence of decreasing subevents
\begin{equation*}
	\Omega_{R , t_i} := \left\{ \omega \in \Omega : \sup\limits_{j = 0, 1, ..., i} \|u^N_{j}\|_{L^\infty} \leq R \right\}, \qquad R \in (0, \infty) ,\quad i \in \{0, 1, ..., M\}. 
\end{equation*}
By $\mathbb{I}_{\tilde{\Omega}}$ we denote the indicator function of set $\tilde{\Omega}$. 
According to the definition of $\Omega_{R , t_i}$, we know that $\mathbb{I}_{\Omega_{R , t_i}}$ is $\mathcal{F}_{t_i}$-measurable and $\mathbb{I}_{\Omega_{R , t_i}} \leq \mathbb{I}_{\Omega_{R , t_j}}$ for $t_i \geq t_j$. 
Next we show the boundedness of $p$-th moment of $\|u^N_i\|_{L^\infty}$ on subevents $\Omega_{R^{\tau}, t_{i}}$ and $\Omega^c_{R^{\tau}, t_{i}}$, and thus on the whole $\Omega$. 
For this purpose, we make an additional assumption on the initial value $P_N u_0$. 
\begin{Assumption}\label{Asp:additional asp}
	For the sufficiently large $\tilde{p} \in \mathbb{N}$, the initial value $u_0$ satisfies $\|P_N u_0\|_{L^{\tilde{p}}(\Omega; L^\infty)} < \infty$. 
\end{Assumption}

\begin{Lemma}\label{Le:bound of v}
	Let $p \geq 2$ and $R^{\tau} := \tau^{-\alpha/4}$ with $\alpha \in (0, 1/2)$. Under Assumptions \ref{Asp:nu},  \ref{Asp:Initial value} and \ref{Asp:additional asp}, it holds that  
	\begin{equation*}
		\sup\limits_{M, N \in \mathbb{N}} \sup\limits_{i = 0, 1, ..., M} \mathbb{E} \left[ \mathbb{I}_{\Omega_{R^{\tau}, t_{i-1}}} \| u^N_{i} \|_{L^{\infty}}^p \right] < \infty, 
	\end{equation*}
	where $u^N_i$ is the full-discrete solution given by \eqref{scheme2} and we set $\mathbb{I}_{\Omega_{R^{\tau}, t_{-1}}} = 1$. 
\end{Lemma}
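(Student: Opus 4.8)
The plan is to reproduce, for the numerical remainder $\hat w$ introduced below, the four--part argument by which the preceding lemma bounds the $L^\infty$--norm of the solution of the perturbed equation, treating the time--sampling $\kappa(\cdot)$ and the taming denominators as lower--order perturbations that are active only on the cutoff event.

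\textbf{Reductions and the taming estimates.} The case $i=0$ is immediate, since $\mathbb I_{\Omega_{R^{\tau},t_{-1}}}\|v_0\|_{L^\infty}^p=\|P_Nu_0\|_{L^\infty}^p$ has finite, $N$--independent expectation by Assumption~\ref{Asp:additional asp}. It also suffices to treat $\tau\le\tau_0$ for a threshold $\tau_0=\tau_0(\alpha,\nu,\theta,T)$; for $\tau>\tau_0$ the step count $M<T/\tau_0$ is bounded and a crude induction on $i$ via \eqref{scheme2}, Lemma~\ref{Le:PN S} and \eqref{PN O} closes that case. Now fix $1\le i\le M$ and work on $\Omega_{R^{\tau},t_{i-1}}$. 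Since $\mathbb I_{\Omega_{R^{\tau},t_{i-1}}}\le\mathbb I_{\Omega_{R^{\tau},t_j}}$ for $j\le i-1$, on this event $\|v_{\kappa(s)}\|_{L^\infty}\le R^{\tau}=\tau^{-\alpha/4}$ for a.e.\ $s\in[0,t_i]$; hence, using $\mathcal U=(0,1)$ and \eqref{nonlinearity},
\[
1\le 1+\tau\|v_{\kappa(s)}^2\|\le 1+\tau^{1-\alpha/2},\qquad 1\le 1+\tau\|f_N(v_{\kappa(s)})\|\le 1+C\tau^{1-3\alpha/4},
\]
and since $\alpha\in(0,\tfrac12)$ both exponents are positive, so for $\tau\le\tau_0$ the two taming denominators lie in $[1,2]$ and differ from $1$ by at most $C\tau^{\eta_0}$ with $\eta_0:=1-\tfrac{3\alpha}{4}>0$.

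\textbf{Decomposition.} Put $z_t:=S_N(t)v_0+\mathcal O_t^N$ and $\hat w_t:=v_t-z_t$ for $t\in[0,t_i]$, with $v_t$ the continuous interpolation \eqref{continuous version}. Because $H^N$ is invariant under the Dirichlet heat semigroup, $S_N(t)\psi=e^{-tA}\psi$ for $\psi\in H^N$, so the maximum principle gives $\|S_N(t)v_0\|_{L^\infty}\le\|P_Nu_0\|_{L^\infty}$ for every $t\ge0$. With $\mathcal U=(0,1)$ it follows that
\[
\|z\|_{\mathbb L^{18}(\mathcal U\times[0,T])}^{18}\le C\Big(\|P_Nu_0\|_{L^\infty}^{18}+\int_0^T\|\mathcal O_s^N\|_{L^\infty}^{18}\,ds\Big)=:Z,\qquad \sup_{N}\mathbb E\big[Z^{p}\big]<\infty\ \ \forall\,p\ge2,
\]
by Assumption~\ref{Asp:additional asp} and \eqref{PN O}. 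From \eqref{continuous version}, $\hat w$ solves a mild equation of the same type as the perturbed equation \eqref{Eq:perturbed}, except that the nonlinearities are evaluated at $v_{\kappa(s)}=\hat w_{\kappa(s)}+z_{\kappa(s)}$ and carry the two taming denominators.

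\textbf{Core estimate and conclusion.} The goal is to show that on $\Omega_{R^{\tau},t_{i-1}}$ one has $\|\hat w_{t_i}\|_{L^\infty}\le C(1+Z)^{c}$ for some $c\ge1$, by carrying the four parts of the preceding proof over to $\hat w$: first an $L^{2}$ energy estimate (now for $\tfrac12\tfrac{d}{dt}\|\hat w_s\|^2$) which, after using the $\nu>\tfrac16$ monotonicity exactly as in \eqref{Est:perturbed 2} and Lemma~\ref{Lem:1}, yields a self--closing bound for $\|\hat w\|_{\mathbb L^{4}(\mathcal U\times[0,t])}$ in terms of $\|z\|_{\mathbb L^{4}}$; then the bootstrap to pointwise $L^{4}$ and finally $L^{\infty}$, using Lemma~\ref{Le:PN S} and the Gagliardo--Nirenberg inequality. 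No multiplicative Gr\"onwall step occurs, which is essential: the available control of $\hat w$ is only polynomial in $Z$, whereas an exponential factor $e^{cZ}$ would fail to be $p$--integrable. The discrepancies with \eqref{Eq:perturbed} — replacing $v_s$ by $v_{\kappa(s)}$ and removing the taming denominators — generate error terms controlled by the taming estimates above together with the one--step increment bound $\|\hat w_t-\hat w_{\kappa(t)}\|\,\mathbb I_{\Omega_{R^{\tau},t_{i-1}}}\le C\tau^{\eta_1}$ ($\eta_1>0$), which follows from the mild formula, \eqref{Est:S(t)B}, \eqref{Est:semigroup 2} and $\|v_{\kappa(s)}\|_{L^\infty}\le R^{\tau}$; the choice $R^{\tau}=\tau^{-\alpha/4}$ is exactly what makes every quantity $\tau\,(R^{\tau})^{k}$ with $k\le 6$ that appears in these errors a positive power of $\tau$. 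Granting this, on $\Omega_{R^{\tau},t_{i-1}}$,
\[
\|v_{t_i}\|_{L^\infty}\le \|S_N(t_i)v_0\|_{L^\infty}+\|\mathcal O_{t_i}^N\|_{L^\infty}+\|\hat w_{t_i}\|_{L^\infty}\le C(1+Z)^{c}+\|\mathcal O_{t_i}^N\|_{L^\infty},
\]
and raising to the $p$--th power, taking expectations and using $\sup_{N}\mathbb E[Z^{pc}]<\infty$ together with the pointwise bound $\sup_{t,N}\mathbb E\|\mathcal O_t^N\|_{L^\infty}^{p}<\infty$ from \eqref{PN O} gives the assertion uniformly in $i,M,N$ for $\tau\le\tau_0$; the $\tau>\tau_0$ range is covered by the crude induction. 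The principal obstacle is the $L^{2}$ part of the core estimate: transporting the $\nu>\tfrac16$ monotonicity through the sampled, tamed equation so that it stays self--closing, since any genuinely multiplicative dependence on $Z$ would destroy the uniform moment bound.
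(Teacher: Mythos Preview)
Your decomposition is genuinely different from the paper's, and the difference matters. You set $z_t:=S_N(t)v_0+\mathcal O_t^N$ and let $\hat w_t:=v_t-z_t$, so $\hat w$ solves only an \emph{approximate} perturbed equation (nonlinearities evaluated at $v_{\kappa(s)}$ and divided by taming denominators). You then propose to rerun the entire four--part argument of the perturbed--equation lemma for $\hat w$, carrying the sampling and taming discrepancies along as error terms. The paper instead puts \emph{all} discrepancies into the remainder: it defines
\[
Z_t:=S_N(t)v_0+\mathcal O_t^N+\int_0^t S_N(t-s)\Big(B_N(v_{\kappa(s)})-B_N(v_s)\Big)ds+\int_0^t S_N(t-s)\Big(\tfrac{B_N(v_{\kappa(s)})}{1+\tau\|v_{\kappa(s)}^2\|}-B_N(v_{\kappa(s)})\Big)ds
\]
(plus the two analogous $f$--terms), so that $\bar v_t:=v_t-Z_t$ satisfies the perturbed equation \eqref{Eq:perturbed} \emph{exactly}. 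The preceding lemma then applies as a black box: $\|\bar v_t\|_{L^\infty}\le C(1+\|Z\|_{\mathbb L^{18}}^{18})$ pointwise in $\omega$, and the only new work is to bound $\mathbb E[\mathbb I_{\Omega_{R^{\tau},t_{i-1}}}\|Z_s\|_{L^\infty}^{18p}]$ by splitting $Z_s$ into four pieces $J_1,\dots,J_4$ and estimating each in $L^{18p}(\Omega;\mathbb R)$ with the aid of the cutoff bound $\|v_{\kappa(r)}\|_{L^\infty}\le R^\tau$ and the one--step increment estimate (your \eqref{Est:vr}--type bound is exactly what the paper also derives).

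Your route is plausible but considerably more laborious, and your own write--up flags the weak point: you say ``Granting this'' before the conclusion and call the $L^2$ energy step ``the principal obstacle''. Concretely, Part~1 of the lemma's proof is a pointwise--in--$\omega$ contradiction argument with a tight cancellation structure (the $\nu>\tfrac16$ coefficient count in \eqref{Est:perturbed 2}); inserting sampling and taming errors adds extra positive terms to the right--hand side whose size depends on random quantities such as $\|\mathcal O_r^N-\mathcal O_{\kappa(r)}^N\|$ that you have not included in your $Z$. The contradiction can still be closed, but only after enlarging your $Z$ to absorb all these random errors and checking that the resulting additive term does not spoil the sign balance --- work that your proposal does not carry out. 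By contrast, the paper's decomposition sidesteps this entirely: the cancellation in Part~1 is used once, in the clean setting of the lemma, and never has to be revisited.
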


\begin{proof}
	We first rewrite \eqref{continuous version} as 
	\begin{equation}\label{Eq:v-Z}
		u^{N,\tau}_t = \int_{0}^{t} S_N(t-s) B_N(u^{N,\tau}_s) ds
		+ \int_{0}^{t} S_N(t-s) f_N(u^{N,\tau}_s) ds 
		+ Z_t, 
	\end{equation}
	where $Z_t: [0,T] \times \Omega \rightarrow H^N$ is  given by
	\begin{align*}
		Z_t	& := S_N(t) u^{N}_0 
		+ \int_{0}^{t} S_N(t-s) \left( B_N(u^{N,\tau}_{\kappa(s)}) - B_N(u^{N,\tau}_s) \right) ds
		+ \int_{0}^{t} S_N(t-s) \left( \frac{B_N(u^{N,\tau}_{\kappa(s)})}{1 + \tau \|u^{N,\tau}_{\kappa(s)}\|_{L^4}^2} - B_N(u^{N,\tau}_{\kappa(s)}) \right) ds \\
		&\quad + \int_{0}^{t} S_N(t-s) \left( f_N(u^{N,\tau}_{\kappa(s)}) - f_N(u^{N,\tau}_s) \right) ds 
		+ \int_{0}^{t} S_N(t-s) \left( \frac{ f_N(u^{N,\tau}_{\kappa(s)}) }{1 + \tau \|f_N(u^{N,\tau}_{\kappa(s)})\|} - f_N(u^{N,\tau}_{\kappa(s)}) \right) ds 
		+ \mathcal{O}_t^N. 
	\end{align*}
	Denote $v_t := u^{N,\tau}_t - Z_t$ with $v_0 = 0$. Then \eqref{Eq:v-Z} is equivalent to 
	\begin{equation*}
		v_t = \int_{0}^{t} S_N(t-s) B_N(v_s + Z_s) ds
		+ \int_{0}^{t} S_N(t-s) f_N(v_s + Z_s) ds. 
	\end{equation*}
	According to \eqref{Est:w}, we have 
	\begin{equation*}
		\| v_t \|_{L^\infty} \leq C \left( 1 + \|Z\|_{\mathbb{L}^{18}(\mathcal{U} \times [0,t])}^{18} \right), \qquad  \forall \, t \in [0,T]. 
	\end{equation*}	
	Consequently, for any $i \in \{ 0, 1, ...., M \}$, we get 
	\begin{equation}\label{Est:v-bar}
		\mathbb{E} \left[ \mathbb{I}_{\Omega_{R^{\tau}, t_{i-1}}} \| v_{t_i} \|_{L^\infty}^p \right] 
		\leq C \left( 1 + \mathbb{E} \left[ \mathbb{I}_{\Omega_{R^{\tau}, t_{i-1}}} \|Z\|_{\mathbb{L}^{18p}(\mathcal{U} \times [0,t_i])}^{18p} \right] \right) 
		\leq C \left( 1 + \mathbb{E} \left[ \mathbb{I}_{\Omega_{R^{\tau}, t_{i-1}}} \int_{0}^{t_i} \|Z_s\|_{L^\infty}^{18p} ds \right] \right). 
	\end{equation}
	Recalling $u^N_i = v_{t_i} + Z_{t_i}$, thus it suffices to estimate $\mathbb{I}_{\Omega_{R^{\tau}, t_{i-1}}}  \|Z_s\|_{L^\infty}$ to obtain the desired conclusion. 		
	To begin with, we claim that for $r \in [0, t_i)$, 
	\begin{equation}\label{Est:vr}
		\mathbb{I}_{\Omega_{R, t_{i-1}}} \| u^{N,\tau}_{r} \|_{L^\infty} 
		\leq C \left( 1 + R + \tau^{\frac{1}{4}} R^2 + \tau^{\frac{3}{4}} R^3 + \| \mathcal{O}_r^N \|_{L^\infty} + \| \mathcal{O}_{\kappa(r)}^N \|_{L^\infty} \right). 
	\end{equation}
	In fact, by the definition of $\Omega_{R , t_{i-1}}$, we know that for $r \in [0,t_i)$, 
	\begin{equation*}
		\mathbb{I}_{\Omega_{R, t_{i-1}}} \| u^{N,\tau}_{\kappa(r)} \|_{L^\infty} \leq R. 
	\end{equation*}
	Moreover, noting that 
	$$\int_{\kappa(r)}^{r} S_N(r-u) dW(u) = \mathcal{O}_r^N - S(r-\kappa(r)) \mathcal{O}_{\kappa(r)}^N, $$ 
	we have 
	\begin{align*}
		u^{N,\tau}_r &= S_N(r-\kappa(r)) u^{N,\tau}_{\kappa(r)} + \int_{\kappa(r)}^{r} \frac{S_N(r-u) B_N(u^{N,\tau}_{\kappa(u)})}{1 + \tau \|  u^{N,\tau}_{\kappa(u)} \|_{L^4}^2} du 
		+ \int_{\kappa(r)}^{r} \frac{S_N(r-u) f_N(u^{N,\tau}_{\kappa(u)})}{1 + \tau \|  f_N(u^{N,\tau}_{\kappa(u)}) \|} du \\
		&\quad + \mathcal{O}_r^N - S(r-\kappa(r)) \mathcal{O}_{\kappa(r)}^N. 
	\end{align*}
	Using the fact $\|S(t) \phi\|_{L^\infty} \leq \| \phi \|_{L^\infty}$, \eqref{Est:PN S} with $\gamma = 0$ and \eqref{Est:S(t)B} with $\varsigma = 0$, we have 
	\begin{align*}
		\mathbb{I}_{\Omega_{R, t_{i-1}}} \| u^{N,\tau}_{r} \|_{L^\infty} 
		&\leq \mathbb{I}_{\Omega_{R, t_{i-1}}} \bigg(  \| u^{N,\tau}_{\kappa(r)} \|_{L^\infty} 
		+ C \int_{\kappa(r)}^{r}  (r-u)^{-\frac{3}{4}} \|u^{N,\tau}_{\kappa(u)} \|_{L^4}^2 du \\ 
		&\qquad\qquad\qquad + \int_{\kappa(r)}^{r}  (r-u)^{-\frac{1}{4}} \|  f_N(u^{N,\tau}_{\kappa(u)}) \| du 
		+ \| \mathcal{O}_r^N \|_{L^\infty} + \| \mathcal{O}_{\kappa(r)}^N \|_{L^\infty} \bigg) \\
		&\leq C \left( 1 + R + \tau^{\frac{1}{4}} R^2 + \tau^{\frac{3}{4}} R^3 + \| \mathcal{O}_r^N \|_{L^\infty} + \| \mathcal{O}_{\kappa(r)}^N \|_{L^\infty} \right),  
	\end{align*}
	which verifies \eqref{Est:vr}. 
	In view of the representation of $Z_s$, for $s \in [0, t_i]$ with $i = 0, 1, ... , M$, we have 
	\begin{align*}
		\mathbb{I}_{\Omega_{R^{\tau}, t_{i-1}}} \|Z_s\|_{L^\infty} 
		& \leq \| S_N(s) u^{N}_0 \|_{L^\infty} 
		+ \|\mathcal{O}_s^N\|_{L^\infty} 
		+ \mathbb{I}_{\Omega_{R^{\tau}, t_{i-1}}} \int_{0}^{s} \left\| S_N(s-r) \left( B_N(u^{N,\tau}_r) - B_N(u^{N,\tau}_{\kappa(r)}) \right) \right\|_{L^\infty} dr \\
		&\quad + \mathbb{I}_{\Omega_{R^{\tau}, t_{i-1}}} \int_{0}^{s} \tau \|u^{N,\tau}_{\kappa(r)}\|_{L^4}^2 \left\| S_N(s-r) B_N(u^{N,\tau}_{\kappa(r)}) \right\|_{L^\infty} dr \\
		&\quad + \mathbb{I}_{\Omega_{R^{\tau}, t_{i-1}}} \int_{0}^{s} \left\| S_N(s-r) \left( f_N(u^{N,\tau}_r) - f_N(u^{N,\tau}_{\kappa(r)}) \right) \right\|_{L^\infty} dr \\
		&\quad + \mathbb{I}_{\Omega_{R^{\tau}, t_{i-1}}} \int_{0}^{s} \tau \|f_N(u^{N,\tau}_{\kappa(r)})\| \left\|S_N(s-r) f_N(u^{N,\tau}_{\kappa(r)})\right\|_{L^\infty} dr \\
		& =: \| S_N(s) u^{N}_0 \|_{L^\infty} 
		+ \| \mathcal{O}_s^N \|_{L^\infty} 
		+ J_1 + J_2 + J_3 + J_4. 
	\end{align*}
	For $J_1$, through \eqref{Est:PN S} with $\gamma=0$, \eqref{Est:S(t)B} with $\varsigma = 0$, $\| u^2-v^2 \| \leq C \left( \|u\|_{L^\infty} + \|v\|_{L^\infty} \right) \|u-v\|$ and \eqref{Est:vr}, it holds that 	\begin{align*}
		J_1 
		&\leq C \mathbb{I}_{\Omega_{R^{\tau}, t_{i-1}}} \int_{0}^{s} (s-r)^{-\frac{3}{4}} \left(\|u^{N,\tau}_r\|_{L^\infty} + \|u^{N,\tau}_{\kappa(r)}\|_{L^\infty}\right) \|u^{N,\tau}_r-u^{N,\tau}_{\kappa(r)}\| dr \\
		&\leq C \mathbb{I}_{\Omega_{R^{\tau}, t_{i-1}}} \int_{0}^{s} (s-r)^{-\frac{3}{4}}  \left( 1 + R^{\tau} + \tau^{\frac{1}{4}} (R^{\tau})^2 + \tau^{\frac{3}{4}} (R^{\tau})^3 + \| \mathcal{O}_r^N \|_{L^\infty} + \| \mathcal{O}_{\kappa(r)}^N \|_{L^\infty} \right) \|u^{N,\tau}_r-u^{N,\tau}_{\kappa(r)}\| dr. 
	\end{align*}
	Since 
	\begin{align*}
		u^{N,\tau}_r-u^{N,\tau}_{\kappa(r)} &= \left( S_N(r) - S_N(\kappa(r)) \right) u^{N}_0 
		+ \int_{0}^{r} S_N(r-u) \frac{B_N(u^{N,\tau}_{\kappa(u)})}{1 + \tau \| u^{N,\tau}_{\kappa(u)} \|_{L^4}^2} du \\
		&\quad - \int_{0}^{\kappa(r)} S_N(\kappa(r)-u) \frac{B_N(u^{N,\tau}_{\kappa(u)})}{1 + \tau \| u^{N,\tau}_{\kappa(u)}\|_{L^4}^2 } du 
		+ \int_{0}^{r} S_N(r-u) \frac{f_N(u^{N,\tau}_{\kappa(u)})}{1 + \tau \|f_N(u^{N,\tau}_{\kappa(u)})\|} du \\
		&\quad - \int_{0}^{\kappa(r)} S_N(\kappa(r)-u) \frac{f_N(u^{N,\tau}_{\kappa(u)})}{1 + \tau \|f_N(u^{N,\tau}_{\kappa(u)})\|} du 
		+ \mathcal{O}_r^N - \mathcal{O}_{\kappa(r)}^N, 
	\end{align*}
	using \eqref{Est:(I-S(t)) B} with $\beta=0$ and $\gamma=3/4$, \eqref{Est:S(t)B} with $\varsigma = 0$, \eqref{Est:semigroup 1} with $\gamma=0$ and \eqref{Est:semigroup 2} with $\rho=3/4$, we obtain 
	\begin{equation*}
		\mathbb{I}_{\Omega_{R, t_{i-1}}} \|u^{N,\tau}_r-u^{N,\tau}_{\kappa(r)}\| 
		\leq C \tau^{\frac{\alpha}{2}} \|u^{N}_0\|_{\dot{H}^{\alpha}} +  CR^2 (\tau^{\frac{1}{2}} + \tau^{\frac{3}{8}}) 
		+ C (1 + R^3) (\tau^{\frac{3}{4}} + \tau)
		+ \|\mathcal{O}^N_r - \mathcal{O}^N_{\kappa(r)} \|. 
	\end{equation*}
	Taking $R = R^{\tau}$ in the above estimation yields 
	\begin{align*}
		J_1 
		&\leq C \int_{0}^{s} (s-r)^{-\frac{3}{4}} \left( 1 + R^{\tau} + \tau^{\frac{1}{4}} (R^{\tau})^2 + \tau^{\frac{3}{4}} (R^{\tau})^3 + \| \mathcal{O}_r^N \|_{L^\infty} + \| \mathcal{O}_{\kappa(r)}^N \|_{L^\infty} \right) \\
		&\quad \times \left( \tau^{\frac{\alpha}{2}} \|u^{N}_0\|_{\dot{H}^{\alpha}} + C (R^{\tau})^2 (\tau^{\frac{1}{2}} + \tau^{\frac{3}{8}}) 
		+ C (1 + (R^{\tau})^3) (\tau^{\frac{3}{4}} + \tau) 
		+ \|\mathcal{O}^N_r - \mathcal{O}^N_{\kappa(r)} \| \right) dr. 
	\end{align*}
	Combining with \eqref{PN O} and \eqref{Est:Holder continuous of O}, and noting that $R^{\tau} = \tau^{-\alpha/4}$ with $\alpha \in (0, 1/2)$, we obtain
	\begin{align*}
		\|J_1\|_{L^{18p}(\Omega; \mathbb{R}) }
		&\leq C \left( 1 + \tau^{-\frac{\alpha}{4}} + \tau^{\frac{1}{4}-\frac{\alpha}{2}} + \tau^{\frac{3}{4}-\frac{3\alpha}{4}} \right) \\
		&\qquad \times \left( \tau^{\frac{\alpha}{2}} \|u^{N}_0\|_{L^{18p}(\Omega; \dot{H}^{\alpha})} 
		+ C(\tau^{\frac{1}{2}-\frac{\alpha}{2}} + \tau^{\frac{3}{8}-\frac{\alpha}{2}})
		+ C(1+\tau^{-\frac{3\alpha}{4}}) (\tau^{\frac{3}{4}} + \tau) 
		+ \tau^{\frac{\alpha}{2}} \right) \\
		&\leq C\left( 1 + \|u^{N}_0\|_{L^{18p}(\Omega; \dot{H}^{\alpha})} \right). 
	\end{align*}
	For $J_2$, by \eqref{Est:PN S} with $\gamma=0$, \eqref{Est:S(t)B} with $\varsigma = 0$, $\tau a (1 + \tau a) \leq \tau a$ for $a \geq 0$,  and recalling $R^{\tau} = \tau^{-\alpha/4}$, we have 
	\begin{equation*}
		J_2 \leq C \tau \mathbb{I}_{\Omega_{R^{\tau}, t_{i-1}}} \int_{0}^{s} (s-r)^{-\frac{3}{4}} \|u^{N,\tau}_{\kappa(r)}\|_{L^4}^2 \|u^{N,\tau}_{\kappa(r)}\|_{L^4}^2 dr 
		\leq C \tau (R^\tau)^4  
		= C \tau^{1-\alpha}  \leq C. 
	\end{equation*}
	For $J_3$ and $J_4$, following similar steps as in the estimates of $J_1$ and $J_2$, we can obtain
	\begin{align*}
		J_3 
		&\leq C \int_{0}^{s} (s-r)^{-\frac{1}{4}} \left( 1 + R^{\tau} + \tau^{\frac{1}{4}} (R^{\tau})^2 + \tau^{\frac{3}{4}} (R^{\tau})^3 + \| \mathcal{O}_r^N \|_{L^\infty} + \| \mathcal{O}_{\kappa(r)}^N \|_{L^\infty} \right)^2 \\
		&\qquad\qquad \times \left( \tau^{\frac{\alpha}{2}} \|u^{N}_0\|_{\dot{H}^{\alpha}} + C (R^{\tau})^2 (\tau^{\frac{1}{2}} + \tau^{\frac{3}{8}}) 
		+ C (1 + (R^{\tau})^3) (\tau^{\frac{3}{4}} + \tau) 
		+ \|\mathcal{O}^N_r - \mathcal{O}^N_{\kappa(r)} \| \right) dr, 
	\end{align*}
	and 
	\begin{equation*}
		J_4 \leq C \tau \mathbb{I}_{\Omega_{R^{\tau}, t_{i-1}}} \int_{0}^{s} (s-r)^{-\frac{1}{4}} \| f(u^{N,\tau}_{\kappa(r)}) \|^2 dr 
		\leq C \tau (R^\tau)^6. 
	\end{equation*}
	By noting that $R^{\tau} = \tau^{-\alpha/4}$, we obtain $\|J_3\|_{L^{18p}(\Omega; \mathbb{R})} \leq C$ and $\|J_4\|_{L^{18p}(\Omega; \mathbb{R})} \leq C$. 
	
	Finally, combining the estimates $J_1$--$J_4$, for any $s \in [0, t_i]$, we have  
	\begin{align*}
		\mathbb{E} \left[ \mathbb{I}_{\Omega_{R^{\tau}, t_{i-1}}} \| Z_s \|_{L^{\infty}}^{18p} \right] < \infty. 
	\end{align*}
	This, together with $u^N_i = v_{t_i} + Z_{t_i}$ and \eqref{Est:v-bar}, leads to the desired conclusion. 
\end{proof}

\begin{Lemma}\label{Le:bound of vi}
	Under Assumptions \ref{Asp:nu}, \ref{Asp:Initial value} and \ref{Asp:additional asp}, for $p \geq 2$, we have 
	\begin{equation*}
		\sup\limits_{M, N \in \mathbb{N}} \, \sup\limits_{i = 0, 1, ..., M} \mathbb{E} \left[ \| u^N_{i} \|_{L^{\infty}}^p \right] < \infty. 
	\end{equation*}
\end{Lemma}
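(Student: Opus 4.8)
The plan is to split the expectation according to the event $\Omega_{R^\tau,t_{i-1}}$ with $R^\tau=\tau^{-\alpha/4}$, $\alpha\in(0,\tfrac{1}{2})$, and its complement: the good event will be handled directly by Lemma \ref{Le:bound of v}, while the complement will be controlled by combining a crude pathwise bound for $\|v_i\|_{L^\infty}$ with a quantitative smallness estimate for $\mathbb{P}(\Omega^c_{R^\tau,t_{i-1}})$ that, once more, comes from Lemma \ref{Le:bound of v}. Concretely, I would fix $p\ge 2$ and $i\in\{0,\dots,M\}$ and write $\mathbb{E}\|v_i\|_{L^\infty}^p=\mathbb{E}[\mathbb{I}_{\Omega_{R^\tau,t_{i-1}}}\|v_i\|_{L^\infty}^p]+\mathbb{E}[\mathbb{I}_{\Omega^c_{R^\tau,t_{i-1}}}\|v_i\|_{L^\infty}^p]$. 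By Lemma \ref{Le:bound of v} the first term is bounded by a constant independent of $M$, $N$ and $i$, so only the second term remains.

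For the crude pathwise bound I would start from the recursion \eqref{scheme1}. Since $S_N(\cdot)$ is a contraction on $L^\infty$ when restricted to $H^N$, we have $\|S_N(\tau)v_m\|_{L^\infty}\le\|v_m\|_{L^\infty}$; and using $A_N^{-1}(I-S_N(\tau))=\int_0^\tau S_N(s)\,ds$ together with \eqref{Est:PN S} (with $\gamma=0$), \eqref{Est:S(t)B} (with $\varsigma=0$), \eqref{nonlinearity} and the elementary inequality $\tfrac{a}{1+\tau a}\le\tfrac{1}{\tau}$, one checks that both tamed drift increments in \eqref{scheme1} are bounded in $L^\infty$ by $C\tau^{-3/4}$, uniformly in the current state. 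Writing $\int_{t_m}^{t_{m+1}}S_N(t_{m+1}-s)\,dW(s)=\mathcal{O}^N_{t_{m+1}}-S_N(\tau)\mathcal{O}^N_{t_m}$ and iterating over the at most $M=T/\tau$ steps gives, for every $\omega$, a bound of the form $\|v_i\|_{L^\infty}\le\|P_Nu_0\|_{L^\infty}+C\tau^{-7/4}+C\tau^{-1}\sup_{0\le m\le M}\|\mathcal{O}^N_{t_m}\|_{L^\infty}$. Raising to the $2p$-th power, taking expectations and using Assumption \ref{Asp:additional asp}, \eqref{PN O} and the union bound $\mathbb{E}[\sup_{m\le M}\|\mathcal{O}^N_{t_m}\|_{L^\infty}^{2p}]\le(M+1)\sup_{m}\|\mathcal{O}^N_{t_m}\|_{L^{2p}(\Omega;L^\infty)}^{2p}$, I obtain $\mathbb{E}\|v_i\|_{L^\infty}^{2p}\le\mathcal{G}(\tau)$, where $\mathcal{G}(\tau)$ is finite, independent of $N$ and $i$, and grows at most polynomially in $\tau^{-1}$.

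Next I would decompose the bad event into disjoint pieces $A_j:=\Omega_{R^\tau,t_{j-1}}\cap\Omega^c_{R^\tau,t_j}$, $0\le j\le i-1$, so that $\Omega^c_{R^\tau,t_{i-1}}=\bigsqcup_{j=0}^{i-1}A_j$. On $A_j$ one has $\mathbb{I}_{A_j}\le\mathbb{I}_{\Omega_{R^\tau,t_{j-1}}}\mathbb{I}_{\{\|v_j\|_{L^\infty}>R^\tau\}}$, so Markov's inequality with an exponent $q$ followed by Lemma \ref{Le:bound of v} (applied with $p$ replaced by $q$) gives $\mathbb{P}(A_j)\le(R^\tau)^{-q}\,\mathbb{E}[\mathbb{I}_{\Omega_{R^\tau,t_{j-1}}}\|v_j\|_{L^\infty}^q]\le C_q(R^\tau)^{-q}$; summing over $j$ and using $M=T/\tau$ and $R^\tau=\tau^{-\alpha/4}$ yields $\mathbb{P}(\Omega^c_{R^\tau,t_{i-1}})\le C_q\,\tau^{\alpha q/4-1}$. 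Then the Cauchy--Schwarz inequality gives $\mathbb{E}[\mathbb{I}_{\Omega^c_{R^\tau,t_{i-1}}}\|v_i\|_{L^\infty}^p]\le\big(\mathbb{P}(\Omega^c_{R^\tau,t_{i-1}})\big)^{1/2}(\mathbb{E}\|v_i\|_{L^\infty}^{2p})^{1/2}\le C_q\,\tau^{(\alpha q/4-1)/2}\,\mathcal{G}(\tau)^{1/2}$. Since $\mathcal{G}(\tau)^{1/2}$ is polynomially bounded in $\tau^{-1}$, choosing $q$ large enough — which is permitted by taking $\tilde p$ in Assumptions \ref{Asp:Initial value} and \ref{Asp:additional asp} large — forces the right-hand side to remain bounded as $\tau\to0$, uniformly in $M$ and, since all ingredients are $N$-uniform, in $N$. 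Together with the first term this would complete the proof.

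The delicate point is the exponent bookkeeping in the last two steps: one must make sure the polynomial blow-up $\mathcal{G}(\tau)$ of the crude bound is always dominated by the decay $\tau^{\alpha q/4-1}$ of $\mathbb{P}(\Omega^c_{R^\tau,t_{i-1}})$. This is exactly where the moderate growth $R^\tau=\tau^{-\alpha/4}$ is essential: it is slow enough for Lemma \ref{Le:bound of v} to hold on $\Omega_{R^\tau,t_{i-1}}$, yet $(R^\tau)^{-q}=\tau^{\alpha q/4}$ can still be made to decay arbitrarily fast by increasing $q$.
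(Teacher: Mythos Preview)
Your proposal is correct and follows essentially the same strategy as the paper: split according to $\Omega_{R^\tau,t_{i-1}}$, handle the good event by Lemma~\ref{Le:bound of v}, and on the complement combine a crude polynomial-in-$\tau^{-1}$ moment bound for $\|v_i\|_{L^\infty}$ with a Markov/H\"older estimate for the probability of the disjoint pieces $A_j=\Omega_{R^\tau,t_{j-1}}\cap\{\|v_j\|_{L^\infty}>R^\tau\}$, again controlled via Lemma~\ref{Le:bound of v}. The only noteworthy difference is in how the crude bound is obtained: the paper works directly with the integral representation \eqref{continuous version} and the smoothing estimates (Lemma~\ref{Le:PN S}, \eqref{Est:S(t)B}) together with $\tfrac{a}{1+\tau a}\le\tau^{-1}$ to get $\|v_{t_m}\|_{L^p(\Omega;L^\infty)}\le C(1+\tau^{-1})$ in a single step, whereas you iterate the one-step recursion \eqref{scheme1}, thereby picking up an extra factor $M=T/\tau$ and a worse exponent $\tau^{-7/4}$ in the pathwise bound. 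Both routes yield polynomial growth in $\tau^{-1}$, so either closes by choosing the Markov exponent $q$ large enough; the paper's version is simply shorter and requires a smaller $q$ (hence a milder $\tilde p$ in Assumptions~\ref{Asp:Initial value} and~\ref{Asp:additional asp}).
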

\begin{proof}
	By using \eqref{continuous version}, \eqref{Est:PN S} with $\gamma=0$, \eqref{Est:semigroup 1}, \eqref{Est:S(t)B} with $\varsigma = 0$ and $a/(1 + \tau a) \leq \tau^{-1}$ for $a \geq 0$, we have 
	\begin{align*}
		\| u^{N,\tau}_{t_m} \|_{L^\infty} 
		&\leq C \|u^{N}_0\|_{L^\infty} 
		+ C \int_{0}^{t_m} (t_m - s)^{-\frac{3}{4}} \frac{\|u^{N,\tau}_{\kappa(s)}\|_{L^4}^2 }{1 + \tau \|u^{N,\tau}_{\kappa(s)}\|_{L^4}^2} ds \\
		&\quad + C \int_{0}^{t_m} (t_m - s)^{-\frac{1}{4}} \frac{ \|f_N(u^{N,\tau}_{\kappa(s)})\|}{1 + \tau \| f_N(u^{N,\tau}_{\kappa(s)}) \| }  ds 
		+ \|\mathcal{O}_{t_m}^N\|_{L^\infty} \\
		&\leq C \|u^{N}_0\|_{L^\infty} 
		+ C t_m^{\frac{1}{4}} \tau^{-1} 
		+ C t_m^{\frac{3}{4}} \tau^{-1} 
		+ \|\mathcal{O}_{t_m}^N\|_{L^\infty}, 
	\end{align*}
	which implies that 
	\begin{equation}\label{Est:vi L infty}
		\| u^{N,\tau}_{t_m} \|_{L^p(\Omega; L^\infty)} \leq C(1 + \tau^{-1}), \qquad  \forall \, m \in \{0, 1, 2, ..., M\}. 
	\end{equation}
	
	Next we will prove the boundedness of $\mathbb{E} \big[ \mathbb{I}_{\Omega_{R^{\tau}, t_{i}}^c} \| u^N_i \|_{L^{\infty}}^{p} \big]$. 
	By the definition of $\Omega_{R, t_{i}}$, it holds that
	\begin{equation*}
		\Omega_{R^{\tau}, t_{i}}^c = \Omega_{R^{\tau}, t_{i-1}}^c \, \cup \, \left( \Omega_{R^{\tau}, t_{i-1}} \cap \{ \omega \in \Omega: \|u^N_i\|_{L^\infty} > R^{\tau} \} \right).
	\end{equation*}
	By recursion, we can get 
	\begin{equation*}
		\mathbb{I}_{\Omega_{R^{\tau}, t_{i}}^c} = \mathbb{I}_{\Omega_{R^{\tau}, t_{i-1}}^c} + \mathbb{I}_{\Omega_{R^{\tau}, t_{i-1}}}  \mathbb{I}_{\{\|u^N_i\|_{L^\infty} > R^{\tau}\}} 
		= \sum_{j=0}^{i} \mathbb{I}_{\Omega_{R^{\tau}, t_{j-1}}}  \mathbb{I}_{\{\|u^N_j\|_{L^\infty} > R^{\tau}\}}, 
	\end{equation*}
	where $\mathbb{I}_{\Omega_{R^{\tau}, t_{-1}}^c} = 0$ and $\mathbb{I}_{\Omega_{R^{\tau}, t_{-1}}} = 1$. 
	Through H\"older's inequality,  \eqref{Est:vi L infty} and Markov's inequality, and recalling $R^{\tau} = \tau^{-\alpha/4}$, we have, by Lemma \ref{Le:bound of v}, 
	\begin{equation}\label{Eq:bound1}
		\begin{aligned}
			\mathbb{E} \left[ \mathbb{I}_{\Omega_{R^{\tau}, t_{i}}^c} \| u^N_i \|_{L^{\infty}}^{p} \right] 
			&= \sum_{j=0}^{i} \mathbb{E} \left[ \| u^N_i \|_{L^{\infty}}^{p} \cdot \mathbb{I}_{\Omega_{R^{\tau}, t_{j-1}}} \mathbb{I}_{\{\|u^N_j\|_{L^\infty} > R^{\tau}\}}  \right] \\
			&\leq \sum_{j=0}^{i} \left( \mathbb{E} [\| u^N_i \|_{L^{\infty}}^{2p}] \right)^{\frac{1}{2}} \cdot \left( \mathbb{E} [ \mathbb{I}_{\Omega_{R^{\tau}, t_{j-1}}} \mathbb{I}_{\{\|u^N_j\|_{L^\infty} > R^{\tau}\}} ] \right)^{\frac{1}{2}} \\
			&\leq C\left( 1+\tau^{-p} \right) \sum_{j=0}^{i}  \left( \mathbb{P} \left( \omega \in \Omega: \mathbb{I}_{\Omega_{R^{\tau}, t_{j-1}}} \|u^N_j\|_{L^\infty} > R^{\tau} \right) \right)^{\frac{1}{2}} \\
			&\leq C\left( 1+\tau^{-p} \right) \sum_{j=0}^{i}  \left( \mathbb{E} \left[ \mathbb{I}_{\Omega_{R^{\tau}, t_{j-1}}} \|u^N_j\|_{L^\infty}^{\frac{8(p+1)}{\alpha}} / (R^{\tau})^{\frac{8(p+1)}{\alpha}} \right] \right)^{\frac{1}{2}} \\
			&\leq C\left( 1+\tau^{-p} \right) \sum_{j=0}^{i} \tau^{p+1} \left( \mathbb{E} \left[ \mathbb{I}_{\Omega_{R^{\tau}, t_{j-1}}} \|u^N_j\|_{L^\infty}^{\frac{8(p+1)}{\alpha}} \right] \right)^{\frac{1}{2}} 
			< \infty. 
		\end{aligned}
	\end{equation}
	In addition, by $\mathbb{I}_{\Omega_{R^{\tau} , t_i}} \leq \mathbb{I}_{\Omega_{R^{\tau} , t_{i-1}}}$ and Lemma \ref{Le:bound of v}, we have 
	\begin{equation}\label{Eq:bound2}
		\sup\limits_{M, N \in \mathbb{N}} \sup\limits_{i = 0, 1, ..., M} \mathbb{E} \left[ \mathbb{I}_{\Omega_{R^{\tau}, t_{i}}} \| u^N_{i} \|_{L^{\infty}}^p \right] < \infty. 
	\end{equation}
	The desired conclusion follows from \eqref{Eq:bound1} and \eqref{Eq:bound2}. 
\end{proof}

Owing to the boundedness of moments of $u^N_i$ proved in Lemma \ref{Le:bound of vi}, then following the similar way as in Theorem \ref{Th:regularity}, one can derive the regularity properties of $u^{N,\tau}_t$. 
\begin{Corollary}\label{Colollary:regularity v}
	Under Assumptions \ref{Asp:nu}, \ref{Asp:Initial value} and \ref{Asp:additional asp}, for $p \geq 2$ and $\alpha \in [0, 1/2)$, it holds that 
	\begin{align*}
		\sup\limits_{M, N \in \mathbb{N}, t \in [0,T]} \| u^{N,\tau}_{t} \|_{L^p(\Omega; \dot{H}^\alpha)}   
		+ \sup\limits_{M, N \in \mathbb{N}, t \in [0,T]} \| u^{N,\tau}_{t} \|_{L^p(\Omega; L^\infty)} < \infty, 
	\end{align*}
	where $u^{N,\tau}_{t}$ is given by \eqref{continuous version}. 
	Furthermore, there exists a positive constant $C$ independent of $N$ and $\tau$ such that 
	\begin{equation*}
		\| u^{N,\tau}_t - u^{N,\tau}_s \|_{L^p(\Omega; H)} 
		\leq C (t-s)^{\frac{\alpha}{2}}, \qquad  0 \leq s < t \leq T.  
	\end{equation*} 
\end{Corollary}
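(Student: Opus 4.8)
The plan is to run, essentially verbatim, the argument behind Theorem~\ref{Th:regularity}, now applied to the continuous version \eqref{continuous version} of the scheme, with Theorem~\ref{Th:moment bound} replaced by the uniform moment bound $\sup_{M,N}\sup_{j}\|v_{t_j}\|_{L^q(\Omega;L^\infty)}<\infty$ of Lemma~\ref{Le:bound of vi} (note that $v_{\kappa(s)}$ is always a grid value $v_{t_j}$, and on $\mathcal U=(0,1)$ one has $\|g\|_{L^r}\le\|g\|_{L^\infty}$ for every $r\ge1$). Three simplifications make the tamed scheme behave like the untamed one here: the tamed denominators satisfy $\tfrac{1}{1+\tau\|v_{\kappa(s)}^2\|}\le1$ and $\tfrac{1}{1+\tau\|f_N(v_{\kappa(s)})\|}\le1$, so they may be discarded; $\|f_N(\psi)\|=\|P_Nf(\psi)\|\le\|f(\psi)\|$ and $S_N(t-s)B_N(\psi)=P_NS(t-s)B(\psi)$, so \eqref{nonlinearity}, \eqref{Est:S(t)B} and \eqref{Est:(I-S(t)) B} transfer to the projected quantities with the same constants; and $\mathcal O^N_t=P_N\mathcal O_t$, so the regularity of $\mathcal O^N_t$ follows from Lemma~\ref{Le:regularity O} and \eqref{PN O}.

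\textbf{The $\dot H^\alpha$ bound.} Split \eqref{continuous version} into its four summands. For the first, $\|S_N(t)v_0\|_{\dot H^\alpha}=\|A^{\alpha/2}S_N(t)P_Nu_0\|\le\|P_Nu_0\|_{\dot H^\alpha}\le\|u_0\|_{\dot H^\alpha}$, finite in $L^p(\Omega)$ by Assumption~\ref{Asp:Initial value}. The Burgers drift is controlled by \eqref{Est:S(t)B} with $\varsigma=\alpha$, producing the integrand $C(t-s)^{-(\alpha+1)/2}\|v_{\kappa(s)}\|_{L^{2p}(\Omega;L^4)}^2$ with $(\alpha+1)/2<1$; the reaction drift by \eqref{Est:semigroup 1} with $\gamma=\tfrac\alpha2$ together with \eqref{nonlinearity}, producing $C(t-s)^{-\alpha/2}(\|v_{\kappa(s)}\|_{L^{3p}(\Omega;L^6)}^3+1)$; and $\|\mathcal O^N_t\|_{L^p(\Omega;\dot H^\alpha)}\le\|\mathcal O_t\|_{L^p(\Omega;\dot H^\alpha)}$ is bounded by Lemma~\ref{Le:regularity O}. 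Minkowski's inequality, integrability of the singular kernels in time, and Lemma~\ref{Le:bound of vi} then give the $\dot H^\alpha$ estimate.

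\textbf{The $L^\infty$ bound.} Here the single point that differs from Theorem~\ref{Th:regularity} is the term $S_N(t)v_0$ near $t=0$: since $\dot H^\alpha\not\hookrightarrow L^\infty$ for $\alpha<\tfrac12$ one cannot reuse the $\dot H^\alpha$ estimate, and \eqref{Est:PN S} would only yield a non-integrable factor $t^{(2\gamma-1)/4}$. Instead I use that $S_N(t)v_0=S(t)P_Nu_0$ and the maximum principle $\|S(t)\psi\|_{L^\infty}\le\|\psi\|_{L^\infty}$, so that $\|S_N(t)v_0\|_{L^\infty}\le\|P_Nu_0\|_{L^\infty}$ uniformly in $t$ and $N$; this is precisely the role of Assumption~\ref{Asp:additional asp}, and this is the one genuinely delicate step of the whole proof. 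The Burgers drift is then handled through the Sobolev embedding $\dot H^{3/4}\hookrightarrow L^\infty$ and \eqref{Est:S(t)B} with $\varsigma=\tfrac34$, whose exponent $-\tfrac78$ is still integrable; the reaction drift through \eqref{Est:semigroup 1} with $\gamma=\tfrac38$, the same embedding and \eqref{nonlinearity}; and $\mathcal O^N_t$ through \eqref{PN O}. Lemma~\ref{Le:bound of vi} again supplies the moment bounds, and collecting the terms gives $\sup_{M,N,t}\|v_t\|_{L^p(\Omega;L^\infty)}<\infty$.

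\textbf{H\"older continuity.} Following the proof of \eqref{Est:Holder regulerity} with $\beta=0$, decompose $v_t-v_s$ into $S_N(s)(I-S_N(t-s))v_0$, the two drift increments $\int_0^s(S_N(t-r)-S_N(s-r))(\cdot)\,dr$, the two drift tails $\int_s^tS_N(t-r)(\cdot)\,dr$, and $\mathcal O^N_t-\mathcal O^N_s$. The first term is $\le C(t-s)^{\alpha/2}\|u_0\|_{\dot H^\alpha}$ by \eqref{Est:semigroup 2} with $\rho=\tfrac\alpha2$; the Burgers increment is $\le C(t-s)^{\alpha/2}\int_0^s(s-r)^{-(\alpha+1)/2}\|v_{\kappa(r)}\|_{L^{2p}(\Omega;L^4)}^2\,dr$ by \eqref{Est:(I-S(t)) B} with $\gamma=\alpha$; the reaction increment is $\le C(t-s)^{\alpha/2}\int_0^s(s-r)^{-\alpha/2}(\|v_{\kappa(r)}\|_{L^{3p}(\Omega;L^6)}^3+1)\,dr$ by \eqref{Est:semigroup 1} ($\gamma=\tfrac\alpha2$) and \eqref{Est:semigroup 2} ($\rho=\tfrac\alpha2$); the tails contribute $C(t-s)^{1/2}$ and $C(t-s)$ via \eqref{Est:S(t)B} ($\varsigma=0$) and \eqref{Est:semigroup 1} ($\gamma=0$), both dominated by $C(t-s)^{\alpha/2}$ since $t-s\le T$; and $\|\mathcal O^N_t-\mathcal O^N_s\|_{L^p(\Omega;H)}=\|P_N(\mathcal O_t-\mathcal O_s)\|_{L^p(\Omega;H)}\le\|\mathcal O_t-\mathcal O_s\|_{L^p(\Omega;H)}\le C(t-s)^{\alpha/2}$ by \eqref{Est:Holder continuous of O} with $\beta=0$. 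Applying Lemma~\ref{Le:bound of vi} to the remaining $L^p(\Omega)$-norms and summing the finitely many pieces yields $\|v_t-v_s\|_{L^p(\Omega;H)}\le C(t-s)^{\alpha/2}$, completing the proof.
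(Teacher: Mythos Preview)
Your proposal is correct and follows exactly the approach indicated by the paper, which offers no detailed proof but simply states that one should repeat the argument of Theorem~\ref{Th:regularity} with Lemma~\ref{Le:bound of vi} in place of Theorem~\ref{Th:moment bound}. Your write-up supplies the details the paper omits, including the correct handling of the initial term in the $L^\infty$ bound via $\|S(t)P_Nu_0\|_{L^\infty}\le\|P_Nu_0\|_{L^\infty}$ and Assumption~\ref{Asp:additional asp}, which is indeed the only place where the argument deviates from a mechanical transcription of Theorem~\ref{Th:regularity}.
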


We are now ready to prove the main result of the paper. 

\begin{Theorem}\label{Th:temporal rate}
	Let $u(t)$ and $u^{N,\tau}_t$ be given by \eqref{mild solution} and \eqref{continuous version}, respectively. Under Assumptions \ref{Asp:nu}, \ref{Asp:Initial value}, \ref{Asp:additional asp}, for $p \geq 2$ and $\alpha \in [0,1/2)$, it holds that 
	\begin{equation}\label{Est:full rate}
		\sup\limits_{t \in [0,T]} \| u(t) - u^{N,\tau}_t \|_{L^p(\Omega; H)}
		\leq C \left( \lambda_{N+1}^{-\frac{\alpha }{2}}
		+ \tau^{\frac{\alpha }{2}} \right) 
		\leq C \left( N^{-\alpha}
		+ \tau^{\frac{\alpha }{2}} \right), 
	\end{equation}
	where the positive constant $C$ is independent of $N$ and $\tau$. 
	Note that $u^{N,\tau}_{t_m} = u^N_m$ when $t=t_m$, hence it certainly holds that
	\begin{equation*}
		\sup\limits_{0 \leq m \leq M} \| u(t_m) - u^N_{m} \|_{L^p(\Omega; H)}
		\leq C \left( N^{-\alpha}
		+ \tau^{\frac{\alpha }{2}} \right), 
	\end{equation*} 
	where $u^N_{m}$ is given by \eqref{scheme1}. 
\end{Theorem}
\begin{proof}
	Denote $\mathcal{E}_t := u^N(t) - u^{N,\tau}_t$. Then by Theorem \ref{Th:spatial rate}, 
	\begin{equation}\label{Est:full rate 1}
		\| u(t) - u^{N,\tau}_t \|_{L^p(\Omega; H)} 
		\leq \| u(t) - u^N(t) \|_{L^p(\Omega; H)} + \| \mathcal{E}_t \|_{L^p(\Omega; H)}
		\leq C \lambda_{N+1}^{-\frac{\alpha }{2}} + \| \mathcal{E}_t \|_{L^p(\Omega; H)}.  
	\end{equation}
	It suffices to estimate $\| \mathcal{E}_t \|_{L^p(\Omega; H)}$. Note that $\mathcal{E}_t$ solves 
	\begin{equation*}
		\frac{d}{dt} \mathcal{E}_t 
		= -A_N \mathcal{E}_t 
		+ B_N(u^N(t)) - \frac{B_N(u^{N,\tau}_{\kappa(t)})}{1 + \tau \| u^{N,\tau}_{\kappa(t)}\|_{L^4}^2}
		+ f_N(u^N(t)) - \frac{f_N(u^{N,\tau}_{\kappa(t)})}{1 + \tau \|f_N (u^{N,\tau}_{\kappa(t)})\|}. 
	\end{equation*}
	By using $\frac{d}{dt} \|\mathcal{E}_t\|^p = p \|\mathcal{E}_t\|^{p-2} \langle \mathcal{E}_t , \frac{d}{dt} \mathcal{E}_t \rangle$, we have 
	\begin{equation*}
		\| \mathcal{E}_t \|^p
		= p \int_{0}^{t} \|\mathcal{E}_s\|^{p-2} \left\langle \mathcal{E}_s , -A \mathcal{E}_s 
		+ B_N(u^N(s)) - \frac{B_N(u^{N,\tau}_{\kappa(s)})}{1 + \tau \| u^{N,\tau}_{\kappa(s)}\|_{L^4}^2}
		+ f_N(u^N(s)) - \frac{f_N(u^{N,\tau}_{\kappa(s)})}{1 + \tau \|f_N (u^{N,\tau}_{\kappa(s)})\|} \right\rangle ds. 
	\end{equation*}
	Through Lemma \ref{Lem:1}, we can get
	\begin{align*}
		&\left\langle \mathcal{E}_s , B_N(u^N(s)) - B_N(u^{N,\tau}_s)
		+ f_N(u^N(s)) - f_N(u^{N,\tau}_s) \right\rangle \\
		\leq \ & \frac{1}{2} \| u^N(s) - u^{N,\tau}_s \|_{\dot{H}^1}^2 + C \| u^N(s) - u^{N,\tau}_s \|^2
		= \frac{1}{2} \| \mathcal{E}_s \|_{\dot{H}^1}^2 + C \| \mathcal{E}_s \|^2, 
	\end{align*}
	which implies that 
	\begin{align*}
		\| \mathcal{E}_t \|^p
		&= p \int_{0}^{t} \|\mathcal{E}_s\|^{p-2} \bigg\langle \mathcal{E}_s , -A \mathcal{E}_s 
		+ B_N(u^N(s)) - B_N(u^{N,\tau}_s)
		+ B_N(u^{N,\tau}_s) - B_N(u^{N,\tau}_{\kappa(s)}) 
		+ B_N(u^{N,\tau}_{\kappa(s)}) - \frac{B_N(u^{N,\tau}_{\kappa(s)})}{1 + \tau \|u^{N,\tau}_{\kappa(s)}\|_{L^4}^2}  \\
		&\quad + f_N(u^N(s)) - f_N(u^{N,\tau}_s)
		+ f_N(u^{N,\tau}_s) - f_N(u^{N,\tau}_{\kappa(s)}) + f_N(u^{N,\tau}_{\kappa(s)}) - \frac{f_N(u^{N,\tau}_{\kappa(s)})}{1 + \tau \|f_N (u^{N,\tau}_{\kappa(s)})\|} \bigg\rangle ds \\
		&\leq p \int_{0}^{t} \|\mathcal{E}_s\|^{p-2} \left( \frac{1}{2} \| \mathcal{E}_s \|_{\dot{H}^1}^2 + C \| \mathcal{E}_s \|^2 
		+ \left\langle \mathcal{E}_s , -A \mathcal{E}_s \right\rangle \right) ds 
		+ p \int_{0}^{t} \|\mathcal{E}_s\|^{p-2} \bigg\langle \mathcal{E}_s , B_N(u^{N,\tau}_s) - B_N(u^{N,\tau}_{\kappa(s)}) \\
		&\quad + \frac{\tau \|u^{N,\tau}_{\kappa(s)}\|_{L^4}^2 \cdot  B_N(u^{N,\tau}_{\kappa(s)})}{1 + \tau \|u^{N,\tau}_{\kappa(s)}\|_{L^4}^2} 
		+ f_N(u^{N,\tau}_s) - f_N(u^{N,\tau}_{\kappa(s)}) + \frac{\tau \|f_N (u^{N,\tau}_{\kappa(s)})\| \cdot f_N(u^{N,\tau}_{\kappa(s)})}{1 + \tau \|f_N (u^{N,\tau}_{\kappa(s)})\|} \bigg\rangle ds \\
		&=: - \frac{p}{2} \int_{0}^{t} \|\mathcal{E}_s\|^{p-2} \| \mathcal{E}_s \|_{\dot{H}^1}^2 ds 
		+ C \int_{0}^{t} \| \mathcal{E}_s \|^p ds + K_1 + K_2 + K_3 + K_4. 
	\end{align*}
	For $K_1$, by using \eqref{Est:P_N B}, $ab \leq a^2/6 + 3b^2/2$ and $a^{p-2}b^2 \leq (p-2)a^p/p + 2b^p/p$ with $a, b \geq 0$, and $\|u^2-v^2\|^p \leq 2^{p-1} \left(\|u\|_{L^\infty}^p + \|v\|_{L^\infty}^p \right) \|u-v\|^p$, we have 
	\begin{align*}
		K_1 &\leq \frac{p}{2} \int_{0}^{t} \|\mathcal{E}_s\|^{p-2} \| \mathcal{E}_s \|_{\dot{H}^1} \| (u^{N,\tau}_s)^2 - (u^{N,\tau}_{\kappa(s)})^2 \| ds \\
		&\leq \frac{p}{2} \int_{0}^{t} \|\mathcal{E}_s\|^{p-2} \left( \frac{1}{6} \| \mathcal{E}_s \|_{\dot{H}^1}^2 
		+ \frac{3}{2} \| (u^{N,\tau}_s)^2 - (u^{N,\tau}_{\kappa(s)})^2 \|^2 \right) ds \\
		&\leq \frac{p}{12} \int_{0}^{t} \|\mathcal{E}_s\|^{p-2} \| \mathcal{E}_s \|_{\dot{H}^1}^2 ds 
		+ \frac{3(p-2)}{4} \int_{0}^{t} \|\mathcal{E}_s\|^{p} ds
		+ 3 \cdot 2^{p-2} \int_{0}^{t} \left( \|u^{N,\tau}_s\|_{L^\infty}^p + \|u^{N,\tau}_{\kappa(s)}\|_{L^\infty}^p \right) \|u^{N,\tau}_s - u^{N,\tau}_{\kappa(s)} \|^p ds. 
	\end{align*}
	By H\"older's inequality and Corollary \ref{Colollary:regularity v}, we obtain 
	\begin{align*}
		\mathbb{E} [K_1] &\leq \frac{p}{12} \int_{0}^{t} \mathbb{E} \left[\|\mathcal{E}_s\|^{p-2} \| \mathcal{E}_s \|_{\dot{H}^1}^2\right] ds 
		+ \frac{3(p-2)}{4} \int_{0}^{t} \mathbb{E} \left[ \|\mathcal{E}_s\|^{p} \right] ds \\
		&\quad + C \int_{0}^{t} \left( \mathbb{E} \left[ \|u^{N,\tau}_s\|_{L^\infty}^{2p} + \|u^{N,\tau}_{\kappa(s)}\|_{L^\infty}^{2p} \right] \right)^{\frac{1}{2}} \left( \mathbb{E} \left[\|u^{N,\tau}_s - u^{N,\tau}_{\kappa(s)} \|^{2p}\right] \right)^{\frac{1}{2}} ds \\
		&\leq \frac{p}{12} \int_{0}^{t} \mathbb{E} \left[\|\mathcal{E}_s\|^{p-2} \| \mathcal{E}_s \|_{\dot{H}^1}^2\right] ds 
		+ \frac{3(p-2)}{4} \int_{0}^{t} \mathbb{E} \left[ \|\mathcal{E}_s\|^{p} \right] ds 
		+ C \tau^{\frac{\alpha p}{2}}. 
	\end{align*}
	For $K_2$, from \eqref{Est:P_N B} and Young's inequality, we have 
	\begin{align*}
		K_2 &\leq \frac{p}{2} \int_{0}^{t} \tau \|u^{N,\tau}_{\kappa(s)}\|_{L^4}^2  \|\mathcal{E}_s\|^{p-2} \|\mathcal{E}_s\|_{\dot{H}^1} \| u^{N,\tau}_{\kappa(s)} \|_{L^4}^2 ds \\
		& \leq \frac{p}{2} \int_{0}^{t} \|\mathcal{E}_s\|^{p-2} \left( \frac{1}{6} \|\mathcal{E}_s\|_{\dot{H}^1}^2 +  \frac{3}{2} \tau^2 \|u^{N,\tau}_{\kappa(s)}\|_{L^4}^8  \right) ds \\
		& \leq \frac{p}{12} \int_{0}^{t} \|\mathcal{E}_s\|^{p-2}  \|\mathcal{E}_s\|_{\dot{H}^1}^2 ds 
		+ \frac{3(p-2)}{4} \int_{0}^{t} \|\mathcal{E}_s\|^{p} ds 
		+ \frac{3}{2} \int_{0}^{t} \tau^p \|u^{N,\tau}_{\kappa(s)}\|_{L^4}^{4p} ds.  
	\end{align*}
	It follows from Corollary \ref{Colollary:regularity v} that 
	\begin{align*}
		\mathbb{E}[ K_2 ]
		& \leq \frac{p}{12} \int_{0}^{t} \mathbb{E} [ \|\mathcal{E}_s\|^{p-2}  \|\mathcal{E}_s\|_{\dot{H}^1}^2 ] ds 
		+ C \int_{0}^{t}  \mathbb{E} [\|\mathcal{E}_s\|^{p}] ds 
		+ C \tau^p.   
	\end{align*}
	By $pab \leq (p-1) a^{p/(p-1)} + b^{p}$, \eqref{nonlinearity}, H\"older's inequality, and Corollary \ref{Colollary:regularity v}, we have 		
	\begin{align*}
		\mathbb{E} [K_3] &\leq  
		p \int_{0}^{t} \mathbb{E} \left[ \|\mathcal{E}_s\|^{p-1} \| f(u^{N,\tau}_s) - f(u^{N,\tau}_{\kappa(s)}) \| \right] ds \\
		&\leq  
		(p-1) \int_{0}^{t} \mathbb{E}  [\|\mathcal{E}_s\|^{p}] ds 
		+ C \int_{0}^{t} \mathbb{E} \left[ \left( 1 + \|u^{N,\tau}_s\|_{L^\infty}^2 + \|u^{N,\tau}_{\kappa(s)}\|_{L^\infty}^2 \right)^p \|u^{N,\tau}_s - u^{N,\tau}_{\kappa(s)} \|^p \right] ds \\ 
		&\leq (p-1) \int_{0}^{t} \mathbb{E}  [\|\mathcal{E}_s\|^{p}] ds
		+ C \tau^{\frac{\alpha p}{2}}. 
	\end{align*}
	For $K_4$, it follows from $pab \leq (p-1) a^{p/(p-1)} + b^{p}$, \eqref{nonlinearity} and Corollary \ref{Colollary:regularity v} that 
	\begin{align*}
		\mathbb{E} [K_4] 
		&\leq p\tau \int_{0}^{t} \mathbb{E} \left[ \|\mathcal{E}_s\|^{p-1} \|f(u^{N,\tau}_{\kappa(s)})\|^2 \right] ds
		\leq (p-1) \int_{0}^{t} \mathbb{E} \left[\|\mathcal{E}_s\|^{p}\right] ds 
		+ \tau^p \int_{0}^{t} \mathbb{E} [\|f (u^{N,\tau}_{\kappa(s)})\|^{2p}] ds \\
		&\leq C \int_{0}^{t} \mathbb{E} \|\mathcal{E}_s\|^{p} ds 
		+ \tau^p \int_{0}^{t} \left( \|u^{N,\tau}_{\kappa(s)}\|^{6p}_{L^{6p}(\Omega; L^6)} + 1 \right) ds 
		\leq C \int_{0}^{t} \mathbb{E} \|\mathcal{E}_s\|^{p} ds 
		+ C \tau^p. 
	\end{align*}
	Putting the estimates $K_1$--$K_4$ together, we obtain 
	\begin{equation*}
		\mathbb{E} \| \mathcal{E}_t \|^p
		\leq C \int_{0}^{t} \mathbb{E} \| \mathcal{E}_s \|^p ds 
		+ C\tau^{\frac{\alpha p}{2}}. 
	\end{equation*} 
	It follows from Gronwall's inequality that $\| \mathcal{E}_t \|_{L^p(\Omega; H)}
	\leq C \tau^{\alpha/2}$, which, together with \eqref{Est:full rate 1}, leads to the desired conclusion.  
\end{proof}

\section{Numerical experiment}\label{Sec:Numerical example}
In this section, we present a numerical experiment to verify the theoretical results. 
Consider the following SBHE driven by a space-time white noise 
\begin{equation*} 
	\left\{ 
	\begin{aligned}
		&\tfrac{\partial u(t,x)}{\partial t} = \tfrac{\partial^2 u(t,x)}{\partial x^2} + u(t,x) \tfrac{\partial u(t,x)}{\partial x} 
		+ u(t,x) (1-u(t,x)) (u(t,x)-0.5)
		+ \dot{W}(t), \quad  t \in (0,1] , \ x \in (0,1),  \\
		&u(t,0) = u(t,1) = 0, \quad  t \in [0,1] , \\
		&u(0,x) = \sin(\pi x) , \quad  x \in (0,1). 
	\end{aligned}
	\right.
\end{equation*}
The numerical mean-square errors $E^{N,\tau}$ at the endpoint $t=1$ are measured by the posteriori estimation and the Monte-Carlo method over $M_{traj}$ sample trajectories, more precisely, 
\begin{equation}\label{error measure}
	E^{N,\tau} = \left( \frac{1}{M_{traj}} \sum_{j=1}^{M_{traj}} \left\| u_{t_M}^{N,\tau} (\omega_j) - \bar{u}_{t_{\bar{M}}}^{\bar{N},\bar{\tau}} (\omega_j) \right\|^2 \right)^{1/2}, 
\end{equation}
where $u_{t_M}^{N,\tau}(\omega_j)$ and $\bar{u}_{t_{\bar{M}}}^{\bar{N},\bar{\tau}}(\omega_j)$ are numerical solutions at $j$-th trajectory with the discretization parameters $N$, $\tau$ and $\bar{N}=N/2$, $\bar{\tau}=2\tau$. 
In every computational step, we simulate the Fourier coefficients of the numerical solution produced by \eqref{scheme1} over the basis $\phi_k(x) = \sqrt{2} \sin (k \pi x)$, $k=1, 2, ...$. 
As described in \cite{Wang2020}, the stochastic convolution 
$$\int_{t_m}^{t_{m+1}} S(t_{m+1}-s) P_N dW(s)$$
can be easily implemented by $\sum_{k=1}^{N} \Lambda_k \phi_k$, where $\Lambda_k = \int_{t_m}^{t_{m+1}} e^{-(t_{m+1}-s) \lambda_k} d\beta_k(s)$ are i.i.d. $\mathcal{N}(0, \sigma_k)$ normally distributed variables with $\sigma_k = (1-e^{-2\tau \lambda_{k}})/(2 \lambda_{k})$. 

According to the error bound derived in Theorem  \ref{Th:temporal rate}, the theoretical convergence rate in space is twice than that in time as shown in \eqref{Est:full rate}, which guides us to take the optimal step ratio $\tau = N^{-2}$. 
The strong convergence rate is obtained by
\begin{align*}
	\text{rate} = \frac{\log E^{N,N^{-2}} - \log E^{\bar{N},\bar{N}^{-2}}}{\log N - \log \bar{N}}. 
\end{align*}
Different $N$ and $\tau=N^{-2}$ are used to implement the simulation, and the results are listed in Table \ref{Table 1}. 
The obtained strong convergence rates are about $0.5$, which agree with the theoretical result  $\lambda_{N+1}^{-\alpha/2} + \tau^{\frac{\alpha}{2}} \sim N^{-\alpha}$ with $\alpha = 1/2 - \epsilon$. 

\begin{table}[htbp!]
	\caption{Errors $E^{N,N^{-2}}$ and convergence rates of the full discretization with different $N$ and $\tau=N^{-2}$. }
	\label{Table 1}
	\centering
	\renewcommand\arraystretch{1.2} 
	\scalebox{0.85}{	\begin{tabular}{ccccccc}
			\hline
			$N$     & $2^{4}$      & $2^{5}$     & $2^{6}$   & $2^{7}$     & $2^{8}$    & expected rate \\ \hline
			$E^{N,N^{-2}}$     & 0.0546 & 0.0392 & 0.0278 & 0.0198 & 0.0141 &                \\
			rate &            & 0.4785     & 0.4950     & 0.4930     & 0.4905     & $0.5-\epsilon$           \\ \hline
	\end{tabular}}
\end{table}

\section{Conclusion}
We have developed a novel fully discrete tamed exponential integrator scheme to approximate the  SBHE driven by the space-time white noise. 
The scheme is explicit and easily implementable. The primary focus of this study is to establish the rate of convergence for the proposed scheme, with the main challenge stemming from the non-global monotonicity of Burgers-type nonlinearity. Leveraging the monotonicity property of the cubic nonlinearity allows us to successfully overcome this obstacle by establishing an essential estimation and ensuring moment boundedness for both mild and approximate solutions. Looking ahead, our future work will involve relaxing the conditions imposed on the model parameters and conducting a comprehensive convergence analysis of the tamed exponential integrator scheme for stochastic Burgers equations.

%

\bibliographystyle{plain}
\bibliography{mybib}

\end{document}